\def\symbol#1{\textcolor{symbols}{#1}}
\def\1{\mathbf{\symbol{1}}}
\newtheorem{metatheorem}[lemma]{Meta-theorem}
\definecolor{darkred}{rgb}{0.9,0.1,0.1}
\newcommand{\eqdef}{\stackrel{\mbox{\tiny def}}{=}}
\newcommand{\E}{\mathbb{E}}
\newcommand{\N}{\mathbb{N}}
\renewcommand{\P}{{\mathbb P}}
\newcommand{\R}{\mathbb{R}}
\newcommand{\T}{\mathbb{T}}
\newcommand{\Z}{\mathbb{Z}}
\newcommand{\nn}{\mathfrak{m}}
\newcommand{\rr}{\mathsf{r}}
\newcommand{\KK}{\mathfrak{K}}
\newcommand{\Aa}{\mathcal{A}}
\newcommand{\Cc}{\mathcal{C}}
\newcommand{\Dd}{\mathcal{D}}
\newcommand{\Mm}{\mathcal{M}}
\newcommand{\Ss}{\mathcal{S}}
\newcommand{\MM}{\mathscr{M}}
\newcommand{\CN}{\mathcal{N}}
\newcommand{\al}{\nu}
\newcommand{\be}{\beta}
\newcommand{\ga}{\gamma}
\newcommand{\eps}{\varepsilon}
\newcommand{\ka}{\kappa}
\newcommand{\la}{\lambda}
\newcommand{\om}{\omega}
\newcommand{\si}{\sigma}
\newcommand{\ov}{\overline}
\newcommand{\msf}{\mathsf}
\newcommand{\Ll}{\left}
\newcommand{\Rr}{\right}
\newcommand{\Ca}{\Cc^{-\al}} 								
\newcommand{\ag}{\alpha}
\newcommand{\bg}{\be}
\newcommand{\dg}{\delta}
\newcommand{\eg}{\varepsilon}
\newcommand{\hg}{h_{\ga}} 								
\newcommand{\kg}{\ka_\ga} 								
\newcommand{\Kg}{K_\ga} 								
\newcommand{\LN}{\Lambda_N} 							
\newcommand{\Le}{\Lambda_\eps}		 					
\newcommand{\SN}{\Sigma_N} 							
\newcommand{\Hg}{\mathscr{H}_{\ga}} 						
\newcommand{\lbg}{\lambda_{\ga}} 							
\newcommand{\Zbg}{\mathscr{Z}_{\ga}} 						
\newcommand{\LgN}{\mathscr{L}_{\ga}} 						
\newcommand{\cg}{c_{\ga}}		 	 					
\newcommand{\Cg}{C_\ga} 								
\newcommand{\mg}{m_\ga} 								
\newcommand{\Mg}{M_\ga}  								
\newcommand{\Dg}{\Delta_\ga}		 					
\renewcommand{\ae}{\star_\eps} 							
\newcommand{\Eg}{E_\ga} 								
\newcommand{\Xg}{X_{\ga}}		  						
\newcommand{\co}{c_{\ga,2}}  								
\newcommand{\ct}{c_{\ga,1}}  								
\newcommand{\Xng}{X_\ga^0} 							
\newcommand{\Xn}{X^0} 							
\newcommand{\CGG}{\mathfrak{c}_{\ga}} 					
\newcommand{\Xe}{X_\eps}		 						
\newcommand{\Ze}{Z_\eps} 								
\newcommand{\Ce}{\mathfrak{c}_\eps} 						
\def\RR#1{R_{t}^{:{#1}:}} 									
\def\ZZ#1{Z^{:{#1}:}} 										
\def\Pg#1{P_{#1}^{\gamma}} 								
\def\hPg#1{\hat{P}_{#1}^{\gamma}} 							
\newcommand{\Zg}{Z_{\ga}}  								
\newcommand{\Rg}{R_{\ga,t}}		 						
\newcommand{\hRg}{\hat{R}_{\ga,t}}		 				
\def\RG#1{R_{\ga,t}^{:{#1}:}} 								
\def\hRG#1{\hat{R}_{\ga,t}^{:{#1}:}} 							
\def\ZG#1{Z_{\ga}^{:{#1}:}} 								
\newcommand{\Qg}{Q_{\ga,t}} 								
\def\EG#1{E_{\ga,t}^{:{#1}:}} 								
\newcommand{\taun}{\tau_{\ga,\nn}}							 
\def\RGn#1{R_{\ga,t,\nn}^{:{#1}:}}		 					
\newcommand{\Zgn}{Z_{\ga,\nn}} 							
\def\ZGn#1{Z_{\ga,\nn}^{:{#1}:}} 							
\newcommand{\sgn}{\sigma_{\ga,\nn}}		 				
\newcommand{\ssgn}{\sigma_{\ga,\nn}^{\prime}}		 		
\newcommand{\Mgn}{M_{\ga,\nn}} 							
\newcommand{\cgn}{c_{\ga,\nn}}							
\newcommand{\Cgn}{C_{\ga,\nn}}		 					
\newcommand{\bZg}{\mathbf{Z}_\ga} 						
\def\bRg#1{\mathbf{R}_{\ga,{#1} }} 							
\newcommand{\bZ}{\mathbf{Z}} 							
\newcommand{\bR}{\mathbf{R}_t}							
\newcommand{\ttZ}{ \widetilde{Z}_{\ga,\nn}} 					
\newcommand{\oX}{\overline{X}_{\ga,\nn}} 					
\newcommand{\tXgn}{X_{\ga,\nn}} 							
\newcommand{\tvg}{ v_{\gamma,\nn}} 						
\newcommand{\bvg}{ \overline{v}_{\gamma,\nn}} 				
\newcommand{\bZZg}{ \overline{Z}_{\gamma,\nn}} 				
\newcommand{\Psg}{ \Psi_{\gamma,\nn}} 					
\newcommand{\bPsg}{ \overline{\Psi}_{\gamma,\nn}} 			
\newcommand{\Ex}{\msf{Ext}} 								
\newcommand{\tCG}{\mathfrak{c}_{\ga}} 						
\def\ERR#1{\msf{Err}^{({#1})}} 								
\def\err#1{\msf{err}^{({#1})}} 								
\newcommand{\hKK}{\widehat{\KK}} 						
\newcommand{\hKg}{\hat{K}_{\ga}} 							
\newcommand{\dk}{\delta_k} 								
\begin{document}

\title{Glauber dynamics of 2D Kac-Blume-Capel model and their stochastic PDE limits}
\author{Hao Shen$^1$ and Hendrik Weber$^2$}
\institute{Columbia University, US, \email{pkushenhao@gmail.com} 
\and University of Warwick, UK, \email{hendrik.weber@warwick.ac.uk}}

\maketitle

\begin{abstract}
We study  the Glauber dynamics of a two dimensional Blume-Capel model (or dilute Ising model) with Kac potential parametrized by $(\beta,\theta)$ - the ``inverse temperature" and the ``chemical potential". We prove that the locally averaged spin field rescales to the solution of the dynamical $\Phi^4$ equation near a curve in the $(\beta,\theta)$ plane and to the solution of the dynamical $\Phi^6$ equation near one point  on this curve. Our proof relies on a discrete implementation of Da Prato-Debussche method \cite{dPD} as in \cite{MourratWeber} but an additional coupling argument is needed to show convergence of the linearized dynamics.
\end{abstract}


\setcounter{tocdepth}{2}
\tableofcontents

\section{Introduction}
\label{sec:intro}

The theory of singular stochastic partial differential equations (SPDEs)  has witnessed enormous progress in the last years. 
Most prominently,  Hairer's work on regularity structures 
\cite{Regularity}
allowed to develop  a stable notion of solution for a large class of SPDEs which
 satisfy a scaling condition called \emph{subcriticality}. 
Roughly speaking, a semi-linear SPDE equation is subcritical (or super-renormalizable), if the behaviour of solutions on small scales is dominated  
by the evolution of the linearized Gaussian dynamics.  The class of subcritical equations includes, for example, the KPZ equation in one spatial dimension, 
as well as  reaction diffusion equations with polynomial  nonlinearities 
\begin{equation}\label{e:RDE}
dX= \big(  \Delta X+ \sum_{k=1}^n \mathfrak{a}_{2k-1} X^{2k-1} \big)\, dt  +  dW  \qquad \mathfrak{a}_{2n-1}<0
\end{equation}
 driven by a space time white noise $dW$, if the space dimension $d$ satisfies $d< \frac{2n}{n-1}$ (of course strictly speaking the dimension $d$ has to be an integer but  one could emulate fractional dimensions
by adjusting the linear operator or the covariances of the noise).
 In particular, for $d =3$, equation \eqref{e:RDE} is only subcritical for the exponent $2n-1 = 3$ while for $d=2$, equation \eqref{e:RDE} is subcritical for all $n$. We will refer to these 
 equations as dynamical $\Phi^4_3$ and $\Phi^{2n}_2$ equations. Note that even in the subcritical case the expression \eqref{e:RDE}
 has to be interpreted with caution: for $d \geq 2$  a renormalization procedure which  amounts to subtracting one or several infinite terms 
 has to be performed. The fact that these solutions behave like the linearized dynamics on small scales but very nontrivially on large scales is related with the role they play in the description
of crossover regimes between universality classes in statistical physics. For example, the KPZ equation  describes the crossover regime between the Edwards-Wilkinson (Gaussian) fixed point and the ``KPZ fixed point",
while the dynamical $\Phi^4$ equation describes such a
crossover mechanism  between the Gaussian and the ``Wilson-Fisher fixed point". 
In two space dimensions the existence of infinitely many fixed points was predicted by conformal field theory, and the $\Phi^{2n}_2$ equations should describe the crossover regimes between the Gaussian and this family of fixed points (\cite[Fig.~4.3]{MR1219313}).


One  key interest when studying these SPDEs is to understand how they arise as scaling limits of various microscopic stochastic systems. Here it is important to note  that the equations are \emph{not}
scale invariant themselves (this is immediate from subcriticality). However,  they arise as scaling limits of systems with  tunable model parameters that are modified as the system is rescaled. 
Starting with  Bertini and Giacomin's famous  result \cite{BertiniGiacomin} on the convergence of the weakly asymmetric simple exclusion process to the KPZ equation,
by now many results in this direction have been obtained  for the KPZ equation (for example  \cite{MR2796514,DemboTsai,2015arXiv1505,CorwinShenTsai,labbe2016weakly} based on 
the Cole-Hopf transform,  \cite{MR3176353,gonccalves2016stochastic,diehl2016kardar}
based on the notion of energy solution, and \cite{KPZJeremy,CLTKPZ} based on regularity structures). Connections between the stationary $\Phi^4_2$ theory and Ising-like models were already observed in the seventies; early references 
include \cite{simonGriffiths} where the equilibrium $\Phi^4_2$ theory was obtained from an Ising-like model by a two-step limiting procedure.  The dynamical equation \eqref{e:RDE} in one dimension was obtained as a scaling limit for a dynamic Ising model with Kac interation  in the nineties \cite{MR1317994,MR1358083}. 
More precisely, the Kac Ising model is a spin model taking values in the $\{ \pm 1\}$ valued configurations over a graph ($\Z$ or a subinterval of $\Z$ in the case of  \cite{MR1317994,MR1358083}).
The static equilibrium model is given as the Gibbs measures associated to the Hamiltonian
\begin{equ} [e:Hamiltonian-Ising]
 \Hg(\si) = -  \frac12 \sum_{k, j }  \kg(k-j) \,\si(j) \,  \si(k),
 \end{equ}
 where $\kg$ is a non-negative interaction kernel parametrised by  $\gamma>0$ which  determines the
 interaction range between spins. In \cite{MR1317994,MR1358083} the Glauber dynamics for this model were considered and it was shown that
the locally averaged field $\hg = \sigma \ast \kg$ converges in law to a solution to the $\Phi^4_1$ equation when suitably rescaled.
Similar results in higher dimensions $d=2,3$ were conjectured in  \cite{MR1661764} but a complete proof in the two dimensional case was given only recently \cite{MourratWeber}. 
A similar convergence result is expected to hold in three dimensions, though a complete proof has not been established yet;  however in \cite{Phi4Weijun,ShenXu} it was shown that  a class of continuous phase coexistence models rescale to $\Phi^4_3$. 
\footnote{In \cite{Phi4Weijun} also different limits such as a dynamical $\Phi^3_3$ theory, which may blow up in finite time where obtained, but in order to achieve this the $\sigma \mapsto -\sigma$ symmetry in the model had to be broken.}


The tunable parameter in all of  the results on convergence of variants of the asymmetric simple exclusion process to KPZ, is the
asymmetry of the exclusion process:  making it smaller and smaller corresponds to making the model locally more ``Gaussian" which in turn 
corresponds to the fact that the dynamics on small scales  are dominated by solutions of the linear equation. In the Kac-Ising  case this tunable parameter is the range of the interaction kernel $\kg$.
As the system is observed on larger and larger scales locally more and more particles interact i.e. locally the system is closer to mean field.

 In order to obtain the scaling limit to $\Phi^4_2$ in \cite{MourratWeber}   five parameters had to be chosen in a certain way: three ``scaling parameters'' namely
 the space scaling, the time scaling, the rescaling of the field as well as two ``model parameters'', the range of the Kac interaction and the temperature. 
 It turns out that in order to obtain a non-linear scaling SPDE as scaling limit, one has to choose the  temperature close to the mean field critical value, although in two dimensions there is a small 
 shift  which corresponds to the renormalization procedure for the limiting equation, and a similar effect is expected in three dimensions. The remaining parameters have to be tuned in exactly the right way to balance all terms in the equation. It is natural to expect that in two space dimensions introducing additional parameters  should allow to balance even more terms leading to higher order terms in the equation. In this work we show that this is indeed the case. We allow for microscopic spin to take values in $\{\pm 1,0\}$ i.e. we add the possibility of a spin value $0$. The Hamiltonian thus becomes:
\begin{equ} [e:Hamilt0]
 \Hg(\si) = -  \frac12 \sum_{k, j }  \kg(k-j) \,\si(j) \,  \si(k) - \tilde\theta \sum_{j } \si(j)^2,
\end{equ}
where the extra parameter $\tilde \theta$ plays a role of chemical potential 
which describes a ratio of the number of ``magnetized" spins ($\sigma(j)\neq 0$)
over the number of ``neutral" spins ($\sigma(j)= 0$). In the limit $\tilde \theta \to \infty$
we recover the original Kac-Ising model.

 This model is the (Kac version) of the Blume-Capel model (initially proposed by \cite{Blume1966,Capel1966}). This Blume-Capel model  
 as well as the closely related (but slightly more complex) ``Blume-Emery-Griffiths" (BEG) model \cite{BlumeEmeryGriffiths}
 have been widely used to describe ``multi-critical" phenomena in equilibrium physics. Physicists also studied phase transitions for the Glauber type dynamics of mean field BEG model \cite{canko2006dynamic}.
Mathematically, the mean field model  in  equilibrium was  studied by in series of papers 
\cite{Ellis2005,Ellis2007,Ellis2010} (see more references therein), 
 analyzed the phase diagrams and proved that the suitably rescaled total spin 
converges to a random variable which is distributed with density $Ce^{-cx^2}$, $Ce^{-cx^4}$ or $Ce^{-cx^6}$ 
 in different regimes.
Also, the work \cite{BEGStein} obtained the rates of these convergences.
Regarding the dynamics, mixing theorems are also proved, see \cite{BCMixing,BEGMixing}.
The Blume-Capel model is also often referred as the (site) dilute Ising model (c.f. for instance the physics book \cite[Section~7.4.3]{francesco2012conformal} or on the mathematical side \cite{MR1797305,MR1341700} and references therein): one considers the site percolation of the square lattice with percolation probability $p$
 and the usual Ising model on the percolation clusters. The joint measure of the percolation and Ising model is then the Gibbs measure with Hamiltonian \eqref{e:Hamilt0} if one identifies $e^{\beta\tilde\theta}=(1-p)^{-1}-1$. The Glauber dynamics are then defined on both percolation and Ising configurations. The results of this article can then be stated as convergence  to the SPDEs by suitable tuning  the Ising temperature and percolation probability. 

 Our main result, Theorem~\ref{thm:Main}, shows that for a one parameter family of parameters we obtain the $\Phi^4_2$ equation in the scaling limit. 
This family ends at a ``tricritical point" where (after different rescaling) we get the $\Phi^6_2$ equation (see Figure~\ref{fig:1}). 
 Our equation for this curve of parameters and the value of the tricritical point coincide with the mean field results in  \cite{BlumeEmeryGriffiths},
 but as in the \cite{MourratWeber} logarithmic corrections to these mean field values are necessary to obtain the convergence results. These logarithmic 
 corrections correspond exactly to the ``logarithmic infinities'' that appear in the  renormalization procedures for the limiting equation.

\begin{figure} \label{fig:1}
\begin{center}
\begin{tikzpicture}[domain=0.25:4]
  \draw[very thin,color=gray] (-0.1,-0.1) grid (3.9,3.9);
  \draw[->] (-0.2,0) -- (4.2,0) node[right] {$a\eqdef e^{\beta \tilde \theta}$};
  \draw[->] (0,-0.2) -- (0,4.2) node[above] {$\beta$};
  \draw[color=blue]   plot (\x,{1+1/(2*\x)})    node[right] 
  	{\footnotesize critical curve $\mathcal C_c: \frac{2a}{2a+1} \beta -1=0$};
\node[circle,fill=red,inner sep=0pt,minimum size=2mm] at (0.25,3) {};
\node[above,red] at (1.2,3) {\footnotesize $(a_c^*,\beta_c^*)=(\frac14,3)$};
\end{tikzpicture}
\label{fig:1}
\caption{The Glauber dynamic of Blume-Capel model rescales to the $\Phi^4_2$ equation for  a curve of parameters in the $(\tilde \theta,\beta)$ plane, parametrized here in terms of 
$(a = e^{\beta \tilde \theta}, \beta)$. The leading coefficient  of the non-linearity in the limiting equation 
changes along the curve and vanishes at the tricritical point $(a_c^*,\beta_c^*)$. Close to this point a different rescaling leads to the $\Phi^6_2$ equation. Following the curve beyond this point 
would lead to a change of  sign in the leading order term resulting in finite time blowup of the corresponding SPDE.   }
\end{center}
\end{figure}
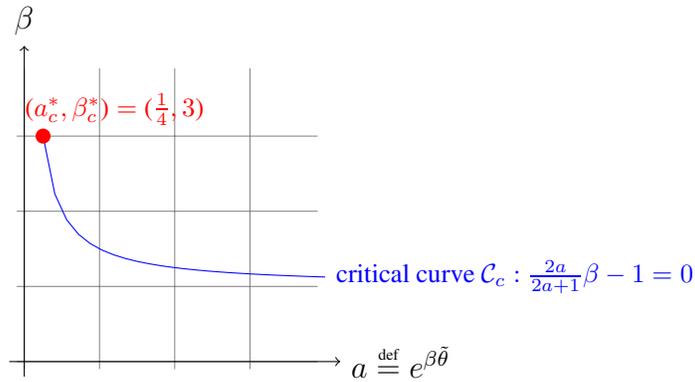

\begin{metatheorem}
Let $h_\gamma=\kg*\sigma$
be the locally averaged spin field of the Glauber dynamic of Kac-Blume-Capel model.
There exist a one parameter family of ``critical values" 
 and one ``tri-critical value", such that
when  $(\beta,\theta)$ approaches a critical value  at a suitable rate (which reflects the renormalization procedure for the limiting equation),
$X_\gamma(t,x)=\gamma^{-1} h_\gamma(t/\gamma^2,x/\gamma^2)$ converges to the solution
of the dynamical $\Phi^4$ equation, and when  $(\beta,\theta)$ approaches the tri-critical value at a suitable rate,
$X_\gamma(t,x)=\gamma^{-1} h_\gamma(t/\gamma^4,x/\gamma^3)$ converges to the solution
of the dynamical $\Phi^6$ equation.
\end{metatheorem}
It seems natural to conjecture that if one makes the model more complex (e.g. by allowing even more general spins and extra interaction terms in the Hamiltonian) any $\Phi^{2n}_2$ model
could be obtained.

On a technical level just as \cite{MourratWeber} our method relies on a discretization of Da Prato-Debussche's solution theory for \eqref{e:RDE} in two dimensions \cite{dPD}.
A main step is to  prove convergence in law  (with respect to the right topology) for the linearized dynamics as well as suitably defined ``Wick powers'' of these linearizations. 
In a second step this is then put into discretization of the ``remainder equation" and tools from harmonic analysis are used to control the error. The most striking difference in the present work
with respect to the technique in \cite{MourratWeber} is a difficulty to describe the fluctuation characteristics. In \cite{MourratWeber} the quadratic variation of the martingale 
$M_\gamma$ (see \eqref{e:evolution2} below for its definition) is equal to a deterministic constant up to a small error which can be controlled with a soft method. 
In the framework of the present paper this is not true anymore, and the quadratic variation has to be averaged over large temporal and spatial scales to characterize the 
noise in the limiting equation as  white noise. We implement this averaging by coupling the  spin field $\sigma(t,k)$ to a  much simpler field $\tilde \sigma(t,k)$ which can 
be analyzed directly. This auxiliary process
lacks the subtle large scale effects of $\sigma$ captured in our main result, but it has similar local jump dynamics and it turns out that $\sigma(t,k)$ coincides with $\tilde \sigma(t,k)$
for many $t$ and $k$ which is enough.

The structure of the paper is as follows. In Section 2 we discuss the two scaling regimes of our model  and formally derive the limiting equation in each regime. Section 3 is mainly aimed to show the convergence of the
linearized equation. It is here that we  present the coupling argument used to show the averaging  of the martingale fluctuation.
Section 4 contains the rest of the argument (the discrete Da Prato-Debussche method etc.). This part of the argument is close to \cite{MourratWeber},
but one difference with respect to  \cite{MourratWeber} is the replacement of the $L^\infty$ norm used there by an $L^p$ norm which becomes necessary because  of an  error term which arises in the coupling argument 
and which is only controlled in $L^p$.

\subsection*{Acknowledgements}
We would like to thank Weijun Xu for many helpful discussions on phase coexistence models and the dynamical $\Phi^4$ equations. H.S. was partially supported by the
NSF through DMS-1712684.

\section{Model, formal derivations and main result}
\label{sec:model-formal}

 The (Kac-)Blume-Capel model  in equilibrium is defined as a Gibbs measure $\lbg$ on the configuration space 
$\Sigma_N=\{-1,0,+1\}^{\Lambda_N}$ 
with  $\Lambda_N=\Z^2 /(2N+1)\Z^2$ being the two-dimensional discrete torus of size $2N+1$. More precisely
\begin{equ}
\lbg (\si) \eqdef \frac{1}{\Zbg}\exp\Big( - \be\Hg(\si) \Big)\; ,
\end{equ}
where $\beta>0$ is the inverse temperature, and $\Zbg$ denotes the normalization constant that is equal to the sum of the exponential weights over all configurations $\sigma\in\Sigma_N$.
The Hamiltonian $\Hg$ of the  model is defined via 
\begin{equ}\label{e:Hamiltonian}
\Hg(\si) \eqdef -  \frac12 \sum_{k, j \in \LN}  \kg(k-j) \,\si(j) \,  \si(k) 
- \tilde\theta \sum_{j \in \LN} \si(j)^2
\end{equ}
where $\tilde\theta$ is a real parameter, $\si\in\Sigma_N$, 
and $\kg$ is the interaction kernel which has  support size  $O(\gamma^{-1})$, which is constructed as follows:
Let $\KK \colon \R^2 \to [0,1]$ be a rotation invariant $\Cc^2$ function with support contained in  the ball of radius $3$ around the origin, such that
\begin{equation}
\label{e:norm-kk}
\int_{\R^2} \KK(x) \, dx = 1, \qquad \int_{\R^2} \KK(x) \, |x|^2 \, dx = 4 \;. 
\end{equation}
Then, for $0 < \gamma< \frac13$,   $\kg \colon \LN \to [0, \infty)$ is defined as $\kg(0) =0$ and
\begin{equation}\label{e:samplek}
\kg(k) = \frac{ \gamma^2 \, \KK(\ga k)  }{ \sum_{k \in \LN \setminus \{0\} }\ga^2 \, \KK(\ga k)}
 \qquad k \neq 0\;.
\end{equation}

We are interested in the following Glauber dynamics, a natural Markov process  on $(\Sigma_N,\lbg)$ which is reversible for $\lbg$. This process is defined in terms of the 
 jump rates $c_\gamma(\sigma; \sigma(j)\to \bar\sigma(j)) $ for a configuration $\sigma$, to change its spin  $\sigma(j)$ at position $j\in\Lambda_N$ 
to $\bar\sigma(j)\in\{\pm 1,0\}$.  This rate only depends on the final value $\bar\sigma(j)$ and is given by
\begin{equs}[2]
c_\gamma(\sigma,j,-1)&\eqdef
c_\gamma(\sigma; \sigma(j)\to -1) 
	= e^{-\beta h_\gamma(\sigma,j)+\theta} / \CN_{\beta,\theta}(h_\gamma(\sigma,j)) \;, \\
c_\gamma(\sigma,j,0)&\eqdef
c_\gamma(\sigma; \sigma(j)\to 0) = 1/\CN_{\beta,\theta}(h_\gamma(\sigma,j)) \;, \\
c_\gamma(\sigma,j,1)&\eqdef
c_\gamma(\sigma; \sigma(j)\to +1) 
	= e^{\beta h_\gamma(\sigma,j)+\theta}/\CN_{\beta,\theta}(h_\gamma(\sigma,j))
\end{equs}
where $\theta \eqdef \tilde\theta\beta$ and
$\hg$ is the locally averaged field
\begin{equ}\label{e:hg}
\hg(\si, k) \eqdef \sum_{j \in \LN } \kg(k-j) \, \si(j)=: \kg \star \si (k)\;,
\end{equ}
and $\CN_{\beta,\theta}(h_\gamma(\sigma,j))$ is a normalization factor
\begin{equ}
 \CN_{\beta,\theta}(h_\gamma(\sigma,j)) \eqdef
e^{-\beta h_\gamma(\sigma,j)+\theta} + 1 
	+ e^{\beta h_\gamma(\sigma,j)+\theta} \;.
\end{equ}
This can be written in a streamlined way
\begin{equ} \label{e:c-gamma}
c_\gamma(\sigma,j,\bar\sigma(j))
	= e^{ \bar\sigma(j) \beta h_\gamma(\sigma,j)+\bar\sigma(j)^2\theta} 
	/ \CN_{\beta,\theta}(h_\gamma(\sigma,j)) \;.
\end{equ}
The generator of the Markov process is then given by
\begin{equ}[e:GeneratorA]
\LgN f(\sigma) = \sum_{j\in\LN} \sum_{\bar\si(j)\in\{0,\pm 1\}} 
	\cg(\si,j,\bar\si(j)) \,(f(\bar\si)-f(\si))
\end{equ}
where $f:\SN\to \R$ and $\bar\si$ is the new spin configuration
obtained by flipping the spin $\si(j)$ in the configuration $\sigma$ to $\bar\si(j)$.
Let 
\begin{equ}
\hg(t,k) \eqdef \hg(\si(t),k)
\end{equ}
then one has
\begin{equ}\label{e:evolution1}
\hg(t,k) = \hg(0,k) + \int_0^t \LgN \, \hg(s, k) \, ds + \mg(t,k)\;,
\end{equ} 
where  the process $\mg(\cdot, k)$ is a martingale, whose explicit form (quadratic variation etc.) will be discussed in Section~\ref{sec:linear}. For the moment we focus on the drift 
term $\LgN \, \hg(s, k)$.
Since $\sigma$ and $\bar\sigma$ can only differ in their spin values at site $j$, one has
\begin{equ}
h_\gamma(\bar\si,k) - h_\gamma(\sigma,k)
	 = \kappa_\gamma(k-j) \, (\bar\sigma(j)-\si(j)) \;,
\end{equ}
and pluggin this into \eqref{e:GeneratorA} yields
\begin{equ}
\LgN  h_\gamma(\sigma,k)
=
\sum_{j\in\LN} 
\sum_{\bar\sigma(j)\in\{\pm1,0\}}
 \kg(j-k)\, (\bar\sigma(j)-\sigma(j)) \,c_\gamma(\sigma,j,\bar\sigma(j)) \;.
\end{equ}
Using the fact that $\sum_{\bar\sigma(j)\in\{\pm1,0\}} c_\gamma(\sigma,j,\bar\sigma(j)) =1$, one can alternatively write
\begin{equ}
\mathscr L_\gamma  h_\gamma(\sigma,k)
=
\sum_{j\in\LN} \kg(j-k) \Big( -\sigma(j)+
\sum_{\bar\sigma(j)\in\{\pm1,0\}}
  \bar\sigma(j) \,c_\gamma(\sigma,j,\bar\sigma(j))\Big)  \;.
\end{equ}
The Taylor expansion of $c_\gamma(\sigma,j,\bar\sigma(j))$ in $\beta \hg(\si,j)$ gives  
\begin{equ}[e:Taylor-c]
c_\gamma(\sigma,j,\bar\sigma(j)) = \sum_{n=0}^\infty c_n \, \beta^n \hg(\si,j)^n
\end{equ}
where the coefficients $c_n$ are given by (we only list the ones we will use):
\begin{equ}
c_1 = \frac{\bar\si(j) e^{\bar\si(j)^2\theta}}{1+2e^\theta} \;, \quad
c_3= \frac{\bar\si(j) e^{\bar\si(j)^2\theta} 
	\Big(\bar\si(j)^2+2 \left(\bar\si(j)^2 -3\right) e^\theta  \Big)}
	{6(1+2e^\theta)^2} \;,
\end{equ}
\begin{equ}
c_5= \frac{\bar\si(j) e^{\bar\si(j)^2\theta} 
	\Big( 4 \left(\bar\si(j)^2-5 \right)^2 e^{2\theta} 
		-2\left(8\bar\si(j)^2 +5 \right)e^\theta +\bar\si(j)^2 \Big)}
	{120(1+2e^\theta)^3} \;.
\end{equ}
Therefore one has
\begin{equs}
\LgN  h_\gamma(\sigma,k)
 &=
\Big(\kg \star \hg(\sigma, k) - \hg(\sigma,k) \Big)
	 + A_{\beta,\theta} \,\kg \star \hg(\sigma,k) \\
& \qquad +  B_{\beta,\theta} \,\kg \star \hg^3(\sigma,k)
+  C_{\beta,\theta} \,\kg \star \hg^5(\sigma,k)  + \dots
\end{equs}
where the remaining terms denoted by ``$\cdots$" are terms of the form $\kg \star \hg^{n}$ with $n$ odd and $n>5$, and
\begin{equs} [e:ABC]
A_{\beta,\theta} &\eqdef \frac{2a}{2a+1} \beta -1 \;,
\qquad
B_{\beta,\theta} \eqdef -\frac{a(4a-1)}{3(2a+1)^2} \beta^3  \;,\\
C_{\beta,\theta} &\eqdef \frac{a (64 a^2-26 a+1) }{60(1+2a)^3}\beta^5
\qquad
(a\eqdef e^\theta = e^{\beta\tilde\theta}) \;.
\end{equs}

Note that all the terms $\kg \star \hg^{n}$ with even powers $n$ vanish, because
$c_\gamma(\sigma,j,\bar\sigma(j))$ remains unchanged under 
$(\hg(\sigma,j),\bar\sigma(j)) \mapsto (-\hg(\sigma,j),-\bar\sigma(j))$,
thus the coefficients $c_n$ in \eqref{e:Taylor-c} for $n$ even must be 
even functions in $\bar\sigma(j)$.
 Multiplying this coefficient by $\bar\sigma(j)$ and 
summing over $\bar\sigma(j)\in\{\pm1,0\}$ necessarily yields zero.

\begin{remark}
As mentioned in Section~\ref{sec:intro}, letting $\theta\to \infty$ in the Hamiltonian \eqref{e:Hamiltonian} one recovers the Kac-Ising model.
Here in the above expansion for $\LgN  h_\gamma$,  if we send $\theta\to \infty$, we obtain the same coefficients in the corresponding expansion \cite[Eq.~(2.10)]{MourratWeber} for the Ising case.
\end{remark}

We set $\eg = \frac{2}{2N+1}$. Now every \emph{microscopic} point  $ k \in \LN$ can be identified with $x = \eg k \in \Le = \{x = (x_1,x_2) \in \eg \Z^2 \colon \,  x_1,x_2 \in(-1,1)  \}$. We view $\Le$ as a discretization of the 
continuous  torus $\T^2$ identified with $[-1,1]^2$.
We define the scaled field 
\begin{equ} \label{e:def-X}
X_\gamma(t,x)=\delta^{-1} h_\gamma(t/\alpha,x/\eps)\;,
\end{equ} 
so that
\begin{equs}
dX_\gamma(t,x) =  \Big(
&    \frac{ \eps^2}{\gamma^2 } \frac{1}{\alpha}  \widetilde{\Delta}_\gamma X_\gamma(t,x) 
+ \frac{A_{\beta,\theta}}{\alpha} K_\gamma \star_\eps X_\gamma(t,x)  
+ \frac{B_{\beta,\theta}\delta^2}{\alpha} K_\gamma \star_\eps X_\gamma^3  (t,x)  \\
&+ \frac{C_{\beta,\theta}\delta^4}{\alpha}  K_\gamma \star_\eps X_\gamma^5  (t,x)  
+ K_\gamma \star_\eps E_\gamma(t,x)  \Big)\,dt+ dM_\gamma(t,x) \;  ,
\label{e:evolution2}
\end{equs}
where the martingale $M_\gamma$ 
is defined by $M_\gamma(t,x)=\delta^{-1} m_\gamma(t/\alpha,x/\eps)$
and has an explicit quadratic variation
of order $\eps^2/(\delta^2\alpha)$ (see \eqref{Gauss2} below); 
the function
$\Kg(x) \eqdef \eg^{-2} \kg(\eg^{-1}x) $ 
is scaled to approximate the Dirac distribution;
the convolution $\ae$ on $\Le$ is defined through $X \ae Y(x) = \sum_{z \in \Le}\eg^2  X(x-z)  Y(z)$; and 
$\widetilde{\Delta}_\gamma X =\frac{\gamma^2}{\eg^2} (\Kg \ae X - X)$, so that $\widetilde{\Delta}_\gamma$ scales like the continuous Laplacian. 
The error term
$E_\gamma$ is given by
\begin{equ} [e:def-Eg]
E_\gamma = \frac{1}{\delta \alpha}
\Big(
	\frac{ \sum_{\bar\sigma \in\{\pm1,0\}} \bar\sigma \, e^{ \bar\sigma \beta \delta X_\gamma+\bar\sigma^2\theta} }
	{ \sum_{\bar\sigma \in\{\pm1,0\}}  e^{ \bar\sigma \beta \delta X_\gamma+\bar\sigma^2\theta} }
	- \frac{2a}{2a+1} \beta \delta X_\gamma 
	- B_{\beta,\theta}\delta^3   X_\gamma^3
	-C_{\beta,\theta}\delta^5   X_\gamma^5 \Big)
	\;.
\end{equ}

Now formally:
\begin{itemize}
\item
By choosing 
$A_{\beta,\theta} /\alpha =O(1)$
(which means that one tunes $\beta,\theta$ close to a curve in the $\beta-\theta$ plane given by $A_{\beta,\theta}=0$)
and the scaling of $\eps,\alpha,\delta$
such that the Laplacian, martingale and cubic terms are all of $O(1)$, namely
\begin{equ} \label{e:Phi4-scaling}
\eps \approx  \gamma^2, 
 \qquad 
 \alpha =\gamma^2,   
 \qquad 
 \delta =\gamma \;,
\end{equ}
one formally obtains the $\Phi^4$ equation, as long as 
$B_{\beta,\theta}\delta^2 / \alpha$ is strictly negative.
\item
However, if $(\beta,\theta)$ is tuned to be close to a special point 
$(\beta_c^*,\theta_c^*)=(3,-\ln 4)$ (which is a mean field value of a ``tricritical" point given by $A_{\beta,\theta}=B_{\beta,\theta}=0$) on the aforementioned curve, then under the scaling 
\eqref{e:Phi4-scaling}, the coefficient $B_{\beta,\theta}\delta^2 / \alpha$ 
vanishes, which would formally result in an Ornstein-Uhlenbeck process. To observe a nontrivial limit we have to consider a different scale.
In fact by imposing that both
$A_{\beta,\theta} /\alpha =O(1)$
 and $B_{\beta,\theta}\delta^2 / \alpha=O(1)$
and that
the Laplacian, martingale and quintic terms are all of $O(1)$,
namely
\begin{equ} \label{e:Phi6-scaling} 
\eps \approx \gamma^3,
 \qquad 
 \alpha = \gamma^4,  
 \qquad 
 \delta = \gamma \;,
\end{equ}
one formally obtains the $\Phi^6$ equation.
\end{itemize}
We will refer to the above two cases as ``the first (scaling) regime" and ``the second (scaling) regime". The curve in the $\beta-\theta$ plane was shown in Fig.~\ref{fig:1} 
Note that at $(\beta_c,\theta_c)$ the coefficient in front of $X^5$
is negative ($C_{\beta_c,\theta_c}=-9/20$) as desired for long time existence of solution.

Here, since the domain $\LN$ has integer size, 
we can only choose our space rescaling as
$ \eg = \frac{2}{2N+1}$, and
$N = \left\lfloor\gamma^{-2}\right\rfloor $ 
in the first regime or $N = \left\lfloor\gamma^{-3}\right\rfloor $ in the second regime. This is why we wrote $\approx$ above.
Write 
\begin{equ} [e:const-c-gamma]
\Delta_\gamma= c_{\gamma,2}^2 \widetilde{\Delta}_\gamma=\frac{ \eps^2}{\gamma^2 \alpha}  \widetilde{\Delta}_\gamma
\end{equ}
 where the coefficient $c_{\gamma,2}=\frac{ \eps}{\gamma^2 } $ in the first regime \eqref{e:Phi4-scaling} or $c_{\gamma,2}=\frac{ \eps}{\gamma^3 } $ in the second regime \eqref{e:Phi6-scaling}
and is close to $1$ up to an error $O(\gamma^2)$.



\begin{remark}
In $d$ space dimensions, the only difference in the above scaling arguments
is that the rescaled martingale $M_\gamma(t,x)$
has an explicit quadratic variation
of order $\eps^d/(\delta^2\alpha)$, so the condition of retaining 
Laplacian, martingale and quintic terms becomes
\begin{equ} 
\eps \approx \gamma^{\frac{6}{6-2d}} \;,
 \qquad 
 \alpha = \gamma^{\frac{2d}{3-d}} \;,  
 \qquad 
 \delta = \gamma^\frac{d}{6-2d} \;,
\end{equ}
It is manifest now that if $d=3$ the above relation cannot be satisfied,
which corresponds exactly to the fact that 
the subcriticality condition  for the $\Phi^6_d$ model is $d<3$.
This may be compared with the scaling for the $\Phi^4_d$ model 
in \cite[Remark~2.2]{MourratWeber} as following.
\begin{equ} 
\eps \approx \gamma^{\frac{4}{4-d}} , 
 \qquad 
 \alpha = \gamma^{\frac{2d}{4-d}},   
 \qquad 
 \delta = \gamma^{\frac{d}{4-d}} \;.
\end{equ}
\end{remark}

As discussed in \cite{MourratWeber}, the above formal derivation is not correct. Instead, in the first regime, fixing a point $(a_c,\beta_c)$ on the curve $\mathcal C_c$, one should write the linear and cubic terms as 
\begin{equ} \label{e:reorg-Hermite-1}
K_\gamma \star_\eps \Big(
 \frac{B_{\beta,\theta}\delta^2}{\alpha} \big(  X_\gamma^3 -3\CGG  X_\gamma \big)
+  \frac{A_{\beta,\theta}+3\CGG B_{\beta,\theta}\delta^2}{\alpha}   X_\gamma
\Big)
\end{equ}
where $\CGG$ is a logarithmically divergent renormalization constant, 
and tune $(a,\beta)$ such that $(A_{\beta,\theta}+3\CGG B_{\beta,\theta}\delta^2)/\alpha= \mathfrak a_1 + c_1(\gamma)$ where \ $\mathfrak a_1 \in \R$ is a fixed constant, and $c_1(\gamma)$ is a quantity vanishing as $\gamma\to 0$ which will give us certain freedom, namely,
\begin{equ} 
\frac{2a}{2a+1}  \bg -1 
= \gamma^2 \Big( \CGG\,\frac{a(4a-1)}{(2a+1)^2}\beta^3  +\mathfrak a_1+c_1(\gamma) \Big)\;.
\end{equ}
%
 The precise value of $\CGG$ will be  given below (Eq. \eqref{e:valueCGG}); the difference between $\beta_c \CGG$ and 
\begin{align*}
\sum_{\substack{\om \in \Z^2 \\ 0<| \om | < \ga^{-1}}} \frac{1}{4 \pi^2 |\om|^2}
\end{align*}
remains bounded as $\ga$ goes to $0$. 
One could well take $c_1(\gamma)=0$;
but the above tuning  is not very transparent because 
there are two parameters $(a,\beta)$ and the right hand side also involves $a,\beta$.
To make the tuning 
more explicit, 
  we can for instance first choose $a=a(\gamma)$ to be any sequence 
  such that $|a-a_c|=O(\gamma^2)$, 
and then replace the quantity  $\frac{a(4a-1)}{(2a+1)^2}\beta^3  $ 
by $\frac{a_c(4a_c-1)}{(2a_c+1)^2}\beta_c^3  $ with an error of $o(\gamma)$.
We then choose $c_1(\gamma)$ to exactly cancel this error,
and tune $\beta$ according to
\begin{equ}  \label{e:Phi4-beta-a}
\frac{2a}{2a+1}  \bg -1 
= \gamma^2 \Big( \frac{a_c(4a_c-1)}{(2a_c+1)^2}\beta_c^3  \,\CGG +\mathfrak a_1 \Big)\;,
\end{equ}
where $a$ stands for the sequence $a(\gamma)$ chosen above that converges to $a_c$.
Note that if $a\to \infty$ we recover from \eqref{e:Phi4-beta-a} 
the choice of $\beta$ in \cite[Eq~(2.18)]{MourratWeber}.

In the second regime, recall that the fifth Hermite polynomial is $x^5-10x^3+15x$. One should write the linear, cubic and quintic terms as
\begin{equs}
K_\gamma \star_\eps\Big(
\frac{A_{\beta,\theta}}{\alpha}  X_\gamma &
+ \frac{B_{\beta,\theta}\delta^2}{\alpha} X_\gamma^3
+ \frac{C_{\beta,\theta}\delta^4}{\alpha}  X_\gamma^5 \Big) \\
=K_\gamma \star_\eps\Big(
\frac{C_{\beta,\theta}\delta^4}{\alpha}  
	& \big(X_\gamma^5 -10\CGG X_\gamma^3 +15\CGG^2 X_\gamma \big)
+ \frac{B_{\beta,\theta}\delta^2 + 10 C_{\beta,\theta}\delta^4 \CGG}{\alpha}
	 \big(X_\gamma^3 -3\CGG X_\gamma \big) \\
& + \big( \frac{A_{\beta,\theta}}{\alpha}  
+ 3\CGG  \frac{B_{\beta,\theta}\delta^2 
	+ 5 C_{\beta,\theta}\delta^4 \CGG }{\alpha} \big)X_\gamma	
	\Big) \label{e:reorg-Hermite}
\end{equs}
So one should tune $(a,\beta)$ such that  the coefficient in front of 
$\big(X_\gamma^3 -3\CGG X_\gamma \big)$ is equal to $\mathfrak a_3 + c_3(\gamma)$ where \ $\mathfrak a_3 \in \R$ is a fixed constant; noting that $C_{\beta,\theta}=C_{\beta_c,\theta_c}+o(\gamma)=-9/20+o(\gamma)$, one can replace $C_{\beta,\theta}$ by $-9/20$ and suitably choose $c_3(\gamma) $ to cancel this error,
and thus obtain 
\begin{equ} \label{e:tune-tri-1}
-\frac{a(4a-1)}{3(2a+1)^2} \beta^3
 = \gamma^2 \Big( \frac{9}{2} \,\CGG +\mathfrak{a}_3 \Big)\;.
\end{equ}
One should furthermore impose that  the coefficient in front of 
$X_\gamma$ in \eqref{e:reorg-Hermite}
 is equal to $\mathfrak a_1 + c_1(\gamma)$ where \ $\mathfrak a_1 \in \R$ is a fixed constant, and suitably choose $c_1(\gamma) $ to get
\begin{equ}\label{e:tune-tri-2}
\frac{2a}{2a+1}  \bg -1 
= \gamma^4 \Big(-3\CGG \mathfrak{a}_3 - \frac{27}{4} \CGG^2+ \mathfrak{a}_1 \Big)\;.
\end{equ}
Combining the above two conditions, we can then obtain the correct tuning of the parameters $(\beta,a=e^\theta)$; we give their values in terms of power series in $\gamma$:
\begin{equs} [e:tune-tri]
a& =\frac14 - \gamma^2 \Big( \frac{9}{8} \,\CGG +\frac{\mathfrak{a}_3}{4} \Big) 
		+\frac{5}{48}\ga^4 \Big(81 \CGG^2 + 36 \CGG  \mathfrak{a}_3+ 4 \mathfrak{a}_3^2\Big)  +O(\ga^5)\;, \\
\beta&=3+ \ga^2 \Big(9 \CGG + 2 \mathfrak{a}_3 \Big)  
	+ \ga^4 \Big(-\frac{189}{4} \CGG^2 + 3 \mathfrak{a}_1 
			- 21 \CGG \mathfrak{a}_3 - \frac43 \mathfrak{a}_3^2 \Big) 
+O(\ga^5) \;.
\end{equs}
In fact, these precise values of $(a,\beta)$ do not matter in the sequel,
and it will be sufficient to know that there exists a family of $(a,\beta)$ depending on $\ga$ (approaching $(\frac14,3)$ as $\ga\to 0$) such that \eqref{e:tune-tri-1} and \eqref{e:tune-tri-2} do hold simultaneously.


\subsection*{The limiting SPDEs}
\label{sec:ContinuousAnalysis}

We briefly review the 
well-posedness theory for the $\Phi^{2n}$ equation 
\begin{equ} [e:Phi2nEqu]
dX= \big(  \Delta X+ \sum_{k=1}^n \mathfrak{a}_{2k-1} X^{2k-1} \big)\, dt  + \sqrt{2/\beta_c} dW
\qquad X(0)=X^0
\end{equ}
in two space dimensions 
with $\mathfrak{a}_{2n-1}<0$, and the parameter $\beta_c>0$ will correspond to a critical value of $\beta$ described above.
In order to interpret the solution to the above equation, let $W_\eps(t,x) = \frac{1}{4}\sum_{|\om| < \eps^{-1}} e^{i \pi \om \cdot x} \,\hat{W}(t,\om) $  be a spatially regularized cylindrical Wiener process, 
and consider the {\it renormalized} equation
 \begin{equ}\label{e:SPDEeps}
 d\Xe = \Big(  \Delta \Xe  + \sum_{k=1}^n \mathfrak{a}_{2k-1} H_{2k-1}(\Xe,\Ce) \Big)\, dt  + \sqrt{2/\beta_c}\, dW_\eps,
 \end{equ}
where $H_m = H_m(x,c)$ are Hermite polynomials defined recursively by setting 
$H_0 = 1$ and $H_{m} = x H_{m-1} - c \, \partial_x H_{m-1} $
so that $H_1 = x$, $H_2 = x^2-c$, $H_3 = x^3 - 3 cx$, etc. 
The constant $\Ce$ is given by
 \begin{equ} \label{e:norm-constant}
 \Ce = {\beta_c}^{-1} \!\!\! \sum_{ 0 <|\om| < \eps^{-1} } \frac{1}{ 4 \pi^2 |\om|^2} \;.
 \end{equ}
In particular,  the constants $\Ce$ diverge logarithmically as $\eps \to 0$. 
Then, \cite{dPD} shows that $\Xe$ converges to nontrivial limit.

More precisely, let 
\[
\Xe(t)= Z_\eps(t)+P_t X^0 + v_\eps(t)
\]
 where $P_t= e^{t \Delta }$ is the solution operator of the heat equation on the torus $\T^2$, and  
\begin{equ} 
 \Ze(t,\cdot) = \sqrt{2/\beta_c} \int_0^t P_{t-s} \, dW_\eps(s, \cdot) 
\end{equ} 
  is the solution to the linear equation with zero initial data.
Letting
\begin{equ}\label{e:def-Wick-eps}
Z_{\eps}^{:m:}(t,x) \eqdef H_m(\Ze(t,x), \Ce(t))
\end{equ}
for 
\begin{align}
\Ce(t)& =  \E[Z_\eps(t,0)^2]  = \frac{1}{2\beta_c}\sum_{ |\om| < \eps^{-1}} \int_0^t \exp\Ll(-2r\pi^2  |\om|^2\Rr) \, dr \;\notag\\
&= \frac{t}{2\beta_c} + \frac{1}{\beta_c}\sum_{0 < |\om| < \eps^{-1}} \frac{1}{4 \pi^2 |\om|^2 } \Big(1 - \exp\big( -2t \pi^2 \, |\om|^2  \big) \Big)   \;,\label{e:coft}
\end{align}
then $Z_{\eps}^{: m :}$  converge almost surely  and in every stochastic $L^p$ space with respect to the metric of $\Cc([0, T],\Ca)$ - this is essentially \cite[Lemma 3.2]{dPD}. We denote the limiting processes by  $Z^{: m:}$. 
 Note that $\Ce = \lim_{t \to \infty} ( \Ce(t) - \frac{t}{2\beta_c}) $, where the term $\frac{t}{2\beta_c}$ comes from the summand for $\om =0$ in \eqref{e:coft} which does not converge as $t \to \infty$. Furthermore, for every fixed $t>0$ the difference $|\Ce - \Ce(t)|$ is uniformly bounded in $\eps$.
This replacement of $\Ce$ by $\Ce(t)$ amounts to rewriting \eqref{e:SPDEeps} as \eqref{e:SPDEeps-t} below. 
Define 
$ \mathfrak{a}_{2k-1}^{(\eps)} (t)$ 
as time dependent coefficients such that
\begin{equ} [e:a-and-at]
\sum_{k=1}^n \mathfrak{a}_{2k-1} H_{2k-1}(x,\Ce)
=\sum_{k=1}^n \mathfrak{a}_{2k-1}^{(\eps)}(t) H_{2k-1}(x,\Ce(t)) \;.
\end{equ}
This is well-defined since the left hand side is an odd polynomial
of degree $2n-1$ which can be uniquely expressed as 
a linear combination of odd Hermite polynomials $H_{2k-1}(x,\Ce(t))$.
Note that the leading coefficients always satisfy 
$\mathfrak{a}_{2n-1}=\mathfrak{a}_{2n-1}^{(\eps)}(t) $. 
For the other coefficients, for instance, when $n=2$ one has $\mathfrak a_1^{(\eps)} (t)=3\mathfrak a_3 (\Ce(t)-\Ce)+\mathfrak a_1$; when $n=3$ one has
\begin{equs}  \label{e:a3t-a3}
\mathfrak a_3^{(\eps)} (t) &= 10 \mathfrak a_5 (\Ce(t)-\Ce)+\mathfrak a_3 \;,\\
\mathfrak a_1^{(\eps)} (t) &= -15 \mathfrak a_5 (\Ce(t)^2-\Ce^2)
	+3(\Ce(t)\mathfrak a_3^{(\eps)}(t) - \Ce \mathfrak a_3) +\mathfrak a_1 \;.
\end{equs}
In fact, plugging the first relation into the second, one has
\begin{equ} \label{e:a1t-a1}
\mathfrak a_1^{(\eps)} (t)  = 3\mathfrak a_3 (\Ce(t)-\Ce) + 15\mathfrak a_5 (\Ce(t)-\Ce)^2 + \mathfrak a_1 \;.
\end{equ}

Then \eqref{e:SPDEeps}  can be rewritten as
  \begin{equ}\label{e:SPDEeps-t}
 d\Xe = \Big(  \Delta \Xe  + 
 \sum_{k=1}^n \mathfrak{a}_{2k-1}^{(\eps)}(t) H_{2k-1}(\Xe,\Ce(t)) \Big)\, dt  + \sqrt{2/\beta_c}\, dW_\eps \;.
 \end{equ}
To proceed one needs the following simple fact, which generalizes \eqref{e:a1t-a1}.
\begin{lemma} \label{lem:ak-ake}
For every $k=1,\dots,n$,
the difference  $\mathfrak{a}_{2k-1} - \mathfrak{a}_{2k-1}^{(\eps)}(t) $ is a polynomial of $\Ce -\Ce(t)$ without zero order term, with coefficients only depending on $\mathfrak a_1,\cdots,\mathfrak a_{2n-1}$. This difference is uniformly bounded in $\eps$ for every $t>0$ and  diverges logarithmically in $t$ as $t\to 0$.
\end{lemma}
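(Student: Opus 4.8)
The plan is to make the change of basis in \eqref{e:a-and-at} completely explicit by means of the classical convolution identity for Hermite polynomials, and then to read off both assertions of the lemma directly. First I would recall that the polynomials $H_0=1$, $H_m=xH_{m-1}-c\,\partial_x H_{m-1}$ have exponential generating function $\sum_{m\ge0}H_m(x,c)\,\lambda^m/m!=\exp(\lambda x-c\lambda^2/2)$. Factoring $\exp(\lambda x-(c_1+c_2)\lambda^2/2)=\exp(-c_2\lambda^2/2)\exp(\lambda x-c_1\lambda^2/2)$ and comparing coefficients of $\lambda^m$ gives the identity
\begin{equ}
H_m(x,c_1+c_2)=\sum_{j=0}^{\lfloor m/2\rfloor}\frac{m!}{j!\,(m-2j)!}\Big(-\tfrac{c_2}{2}\Big)^{j}H_{m-2j}(x,c_1)\;.
\end{equ}
Applying this with $c_1=\Ce(t)$ and $c_2=\Ce-\Ce(t)$ (so that $c_1+c_2=\Ce$), substituting $m=2k-1$ in the left-hand side of \eqref{e:a-and-at}, and re-indexing the resulting double sum by $l=k-j$, one obtains after matching the coefficients of the linearly independent polynomials $H_{2l-1}(\,\cdot\,,\Ce(t))$ (they have distinct degrees) the explicit formula
\begin{equ}
\mathfrak a_{2l-1}^{(\eps)}(t)=\sum_{k=l}^{n}\mathfrak a_{2k-1}\,\frac{(2k-1)!}{(k-l)!\,(2l-1)!}\Big(\tfrac{\Ce(t)-\Ce}{2}\Big)^{k-l}\;,\qquad l=1,\dots,n\;.
\end{equ}

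The term $k=l$ of this sum is exactly $\mathfrak a_{2l-1}$, so $\mathfrak a_{2l-1}-\mathfrak a_{2l-1}^{(\eps)}(t)=-\sum_{k=l+1}^{n}\mathfrak a_{2k-1}\frac{(2k-1)!}{(k-l)!\,(2l-1)!}\big(\frac{\Ce(t)-\Ce}{2}\big)^{k-l}$ is a polynomial in $\Ce-\Ce(t)$ of degree $n-l$ with vanishing constant term, whose coefficients are universal rational multiples of $\mathfrak a_{2l+1},\dots,\mathfrak a_{2n-1}$ (in particular, of $\mathfrak a_1,\dots,\mathfrak a_{2n-1}$). This is the first assertion; it also recovers \eqref{e:a1t-a1} as the case $l=1,n=3$ and the observation preceding the lemma that the leading coefficient is unchanged as the case $l=n$.

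It then remains to control $\Ce-\Ce(t)$ itself. Subtracting \eqref{e:coft} from \eqref{e:norm-constant},
\begin{equ}
\Ce-\Ce(t)=-\frac{t}{2\beta_c}+\frac1{\beta_c}\sum_{0<|\om|<\eps^{-1}}\frac{e^{-2t\pi^2|\om|^2}}{4\pi^2|\om|^2}\;.
\end{equ}
For fixed $t>0$ the sum is nonnegative and bounded above by $\sum_{\om\in\Z^2\setminus\{0\}}e^{-2t\pi^2|\om|^2}/(4\pi^2|\om|^2)<\infty$, uniformly in $\eps$, so $|\Ce-\Ce(t)|$ is bounded uniformly in $\eps$. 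As $t\to0$, a standard lattice-sum-to-integral comparison gives that this sum equals $\int_{|\xi|\ge1}e^{-2t\pi^2|\xi|^2}/(4\pi^2|\xi|^2)\,d\xi+O(1)=\frac1{4\pi}\int_{2\pi^2 t}^{\infty}u^{-1}e^{-u}\,du+O(1)=\frac1{4\pi}\log(1/t)+O(1)$, so $|\Ce-\Ce(t)|$ diverges logarithmically as $t\to0$. Combining with the previous paragraph: since $\mathfrak a_{2k-1}-\mathfrak a_{2k-1}^{(\eps)}(t)$ is a polynomial with zero constant term in $\Ce-\Ce(t)$, it is uniformly bounded in $\eps$ for every $t>0$ and, as $t\to0$, diverges logarithmically (up to a power of the logarithm, of degree $n-k$; genuinely logarithmically for the leading-order reductions used in the sequel).

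I do not anticipate a real obstacle: the only steps needing care are getting the combinatorial identity and the re-indexing $l=k-j$ right, and the elementary comparison of the lattice sum with its integral to extract the $t\to0$ rate; the rest is bookkeeping.
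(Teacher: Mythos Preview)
Your proof is correct and follows essentially the same approach as the paper: both arguments establish that $H_{2k-1}(x,\Ce)$ expands in the basis $\{H_j(x,\Ce(t))\}$ with coefficients depending only on $\Ce-\Ce(t)$, you via the generating-function factorization $e^{\lambda x-\Ce\lambda^2/2}=e^{-(\Ce-\Ce(t))\lambda^2/2}e^{\lambda x-\Ce(t)\lambda^2/2}$, the paper via the equivalent differential-operator factorization $e^{-\Ce\Delta/2}=e^{-\Ce(t)\Delta/2}e^{-(\Ce-\Ce(t))\Delta/2}$. Your version has the merit of producing the explicit closed form for $\mathfrak a_{2l-1}^{(\eps)}(t)$ and of spelling out the lattice-sum asymptotics that the paper leaves as ``obvious''.
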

\begin{proof}
By the differential operator representation of  Hermite polymonials $H_m(x,c)=e^{-c\Delta/2} x^m$, where $\Delta$ is Laplacian in $x$ and the exponential is understood as power series without convergence problem when acting on polynomials. 
So we have 
\begin{equs}
H_{2k-1}(x,\Ce) &= e^{-\Ce \Delta /2} x^{2k-1}= e^{-\Ce(t)\Delta /2}e^{-(\Ce-\Ce(t))\Delta/2}  x^{2k-1} \\
&= e^{-\Ce(t)\Delta /2} H_{2k-1} (x,\Ce-\Ce(t)) \;.
\end{equs}
The operator $e^{-\Ce(t)\Delta /2}$ replaces every monomial term $x^m$ in the polymonial $H_{2k-1} (x,\Ce-\Ce(t))$ by $H_{m}(x,\Ce(t))$, which means that when re-expanding 
$H_{2k-1}(x,\Ce)$ on the left hand side of \eqref{e:a-and-at}  w.r.t. the basis $H_{m}(x,\Ce(t))$ the coefficients only depend on $\Ce,\Ce(t)$ via $\Ce-\Ce(t)$.
After this re-expansion we then compare the coefficients on the two sides of  \eqref{e:a-and-at}, noting that if $\Ce -\Ce(t)=0$ then $\mathfrak{a}_{2k-1}^{(\eps)}=\mathfrak{a}_{2k-1}$, and we obtain the first statement of the lemma. Note that
\begin{equ}  \label{e:ce-cet}
 \lim_{\eps \to 0} (\Ce -\Ce(t)) 
= - \frac{t}{2\beta_c}  + \sum_{\om \in \Z^2 \setminus\{ 0\}}  \frac{e^{- 2t\pi^2 |\om|^2}}{ 4\beta\pi^2 |\om|^2} \;.
\end{equ}
It is then obvious that the second statement of the lemma also holds. 
\end{proof}

By this lemma the limiting coefficient  $\lim_{\eps\to 0} \mathfrak{a}_{2k-1}^{(\eps)}(t) $ is integrable in $t$ at $t=0$.

As a convenient way to deal with the initial data $X^0$, we further define 
$\tilde Z (t) = Z(t)+P_t X^0 $ and
\begin{equ} \label{e:def-tilde-n}
\tilde Z^{: m :} (t) = \sum_{k=0}^m {m \choose k} 
    (P_t X^0)^{m-k}  Z^{: k :} (t) \, 
\end{equ}
The following theorem, essentially 
\cite[Theorem~6.1]{MourratWeberGlobal} (together with Remark~1.5 therein),
states that the equation
\begin{equ}\label{e:vequation-1}
\partial_t v = \Delta v 
+ \sum_{k=1}^{n} \mathfrak{a}_{2k-1}(t)
	 \sum_{\ell=1}^{2k-1} {2k-1 \choose \ell} \tilde Z^{: 2k-1-\ell :} v^\ell
\end{equ}
which is derived from \eqref{e:SPDEeps-t},
or equivalently 
\begin{equ}\label{e:vequation}
\partial_t v = \Delta v 
+ \sum_{\ell=1}^{2n-1}
	\Big( \sum_{k\in \Z\cap [\frac{\ell+1}{2}, n]}   \!\!\!\!\! \mathfrak{a}_{2k-1}(t)
	{2k-1 \choose \ell} \tilde Z^{: 2k-1-\ell :} \Big)v^\ell
\end{equ}
with zero initial condition $v(0)=0$ is globally well-posed.
The solution $v$ is the limit of $v_\eps$.

\begin{theorem}\label{thm:ContSolution}
For $\al>0$ small enough, fix an initial datum $\Xn \in \Ca$. 
For 
\[(Z, Z^{:2:}, \dots, Z^{:2n-1 :})  \in\big(  L^{\infty}\big([0,T], \Ca \big)\big)^{2n-1} \;,
\]
 let $(\tilde{Z},\widetilde{Z}^{:2 :},\cdots, \widetilde{Z}^{:2n-1 :}) $ be defined as in \eqref{e:def-tilde-n}. 
 Let $\Ss_T(Z, Z^{: 2: }, \dots, Z^{: 2n-1 : })$ denote the solution $v$ on $[0,T]$ of the PDE \eqref{e:vequation}. Then for any $\kappa >0$,
  the mapping 
  \[
  \Ss_T : \big(  L^{\infty}\big([0,T], \Ca \big)\big)^{2n-1} \to\Cc([0,T], \Cc^{2 - \al - \kappa}(\T^2) )
  \]
   is Lipschitz continuous on bounded sets .
\end{theorem}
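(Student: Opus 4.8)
The plan is to prove this in the now-standard way for the remainder equation of a two-dimensional $\Phi^{2n}$ model: a Da Prato--Debussche \cite{dPD} fixed point argument for the mild form of \eqref{e:vequation} on a short time interval, followed by a dissipative a priori estimate coming from the negative leading nonlinearity to extend the solution globally and to obtain the Lipschitz bound. In fact the assertion is essentially \cite[Theorem~6.1]{MourratWeberGlobal}, and the only genuinely new feature is that the coefficients $\mathfrak{a}_{2k-1}(t)=\lim_{\eps\to0}\mathfrak{a}_{2k-1}^{(\eps)}(t)$ appearing in \eqref{e:vequation} are time dependent and, by Lemma~\ref{lem:ak-ake}, merely integrable --- not bounded --- near $t=0$; so the main point to check is that this weaker integrability is harmless both for the contraction and for the global estimate.

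First I would recast \eqref{e:vequation} as $v(t)=\int_0^t P_{t-s}F(s,v(s))\,ds$ with $F(s,w)=\sum_{\ell=1}^{2n-1}g_\ell(s)\,w^\ell$, where $g_\ell(s)=\sum_k\mathfrak{a}_{2k-1}(s)\binom{2k-1}{\ell}\tilde Z^{:2k-1-\ell:}(s)$. Using the data bound $(Z,\dots,Z^{:2n-1:})\in(L^\infty([0,T],\Ca))^{2n-1}$, the definition \eqref{e:def-tilde-n}, and the smoothing estimate $\|P_sX^0\|_{\Cc^{c\al}}\lesssim s^{-c'\al}\|X^0\|_{\Ca}$ for the contribution of the initial datum, one gets $\|\tilde Z^{:m:}(s)\|_{\Ca}\lesssim s^{-c_m\al}$ with $c_m$ depending only on $m$ (up to a constant depending on the data), which combined with the logarithmic $s\to0$ divergence of $\mathfrak{a}_{2k-1}(\cdot)$ from Lemma~\ref{lem:ak-ake} shows $g_\ell\in L^1([0,T],\Ca)$ once $\al$ is small enough. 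Together with the parabolic Schauder estimate $\|\int_0^tP_{t-s}h(s)\,ds\|_{\Cc^{2-\al-\kappa}}\lesssim\int_0^t(t-s)^{-1+\kappa/2}\|h(s)\|_{\Ca}\,ds$ and the multiplication estimate $\|g_\ell(s)w(s)^\ell\|_{\Ca}\lesssim\|g_\ell(s)\|_{\Ca}\|w(s)\|_{\Cc^{2-\al-\kappa}}^\ell$ (legitimate since $2-\al-\kappa>\al$ for $\al,\kappa$ small), a routine computation shows that $w\mapsto\int_0^\cdot P_{\cdot-s}F(s,w(s))\,ds$ maps a ball of $\{w\in\Cc([0,T],\Cc^{2-\al-\kappa}):w(0)=0\}$ into itself and is a contraction there for $T$ small, since all the $s\to0$ singularities produced are integrable and come with a positive power of $T$; this yields a unique local solution $v$ with $v(t)\to0$ in $\Cc^{2-\al-\kappa}$.

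To globalise, I would use that the top coefficient is constant and negative, $\mathfrak{a}_{2n-1}(t)\equiv\mathfrak{a}_{2n-1}<0$ (the remark following \eqref{e:a-and-at}), so that the coming-down-from-infinity/energy estimate of \cite{MourratWeberGlobal} applies with only cosmetic changes: the lower-order coefficients $\mathfrak{a}_{2k-1}(t)$ enter only through their finite $L^1([0,T])$ norms, which is exactly what a Gr\"onwall-type argument can absorb, so one obtains an a priori bound on $\|v\|_{\Cc([0,T],\Cc^{2-\al-\kappa})}$ in terms of $\|X^0\|_{\Ca}$ and $\max_m\|Z^{:m:}\|_{L^\infty([0,T],\Ca)}$; iterating the local existence step up to any fixed time gives the global solution. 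Finally, for Lipschitz dependence of $\Ss_T$, the difference $v^{(1)}-v^{(2)}$ of the solutions driven by two data tuples solves a linear equation whose coefficients are built polynomially from $v^{(1)},v^{(2)}$ and the two noise tuples; plugging the a priori bounds into the same Schauder and multiplication estimates and closing with a weakly singular Gr\"onwall inequality gives $\|v^{(1)}-v^{(2)}\|_{\Cc([0,T],\Cc^{2-\al-\kappa})}\lesssim\sum_m\|Z^{(1),:m:}-Z^{(2),:m:}\|_{L^\infty([0,T],\Ca)}$ with a constant depending only on a bound for the data, i.e.\ Lipschitz continuity on bounded sets.

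I expect the main obstacle to be purely technical: the simultaneous bookkeeping of the two $s\to0$ singularities --- the logarithmic one from $\mathfrak{a}_{2k-1}(s)$ in Lemma~\ref{lem:ak-ake} and the power one $s^{-c_m\al}$ from the $P_sX^0$ contribution to $\tilde Z^{:m:}$ --- and verifying that, convolved with the Schauder kernel $(t-s)^{-1+\kappa/2}$, they still produce $v\in\Cc([0,T],\Cc^{2-\al-\kappa})$ with $v(0)=0$ and a genuine contraction factor (this is where the smallness of $\al$ relative to $\kappa$ is used). The global-in-time part, by contrast, is essentially inherited verbatim from \cite{MourratWeberGlobal} once the constancy and negativity of $\mathfrak{a}_{2n-1}$ have been recorded.
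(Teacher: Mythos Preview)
Your proposal is correct and consistent with the paper's treatment: the paper does not give a proof of this theorem at all but simply attributes it to \cite[Theorem~6.1]{MourratWeberGlobal} (together with Remark~1.5 therein), exactly as you do in your first paragraph. Your sketch goes beyond the paper by outlining the actual Da~Prato--Debussche contraction and the globalisation via the dissipative top-order term, and by flagging the integrable-but-unbounded time-dependent coefficients from Lemma~\ref{lem:ak-ake} as the only point requiring attention; this is the right diagnosis, and the paper offers nothing further to compare against.
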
 

With the solution $v$ given by this theorem we call $X(t)= Z(t)+P_t X^0 + v(t)$ the solution to the dynamical $\Phi^{2n} $ equation 
\eqref{e:Phi2nEqu} with initial data $\Xn \in \Ca$.
(Due to the above theorem, Eq.~\eqref{e:Phi2nEqu} is sometimes written with each term $ X^{2k-1}$
 replaced by $:\!\!  X^{2k-1}\!\!:$ but we refrain from using this notation.)

\subsection*{Main result}

As in \cite{MourratWeber}, 
for any function $Y \colon \Le \to \R$, we define its smooth extension  to a function $\T^2 \to \R$ which is denoted by $\Ex Y$ (but sometimes still written as $Y$) in the following way:
\begin{equ} [e:Extension]
\Ex Y(x)  = \frac{1}{4} \sum_{\om \in \{-N, \ldots, N\}^2}   \sum_{y \in \Le}  \eg^2   \; e^{i \pi \om \cdot (x-y)} \; Y(y) 
 \qquad (x\in \T^2)
\end{equ}
which is the unique trigonometric polynomial of degree $\le N$ that coincides with $Y$ on $\Le$.

For any metric space $\Ss$, we denote by $\Dd(\R_+,\Ss)$ the space of $\Ss$ valued cadlag function endowed with the Skorokhod topology. For any $\al>0$ we denote by $\Ca$ the Besov space $B^{-\al}_{\infty, \infty}$ (see \cite[Appendix~A]{MourratWeber} for such spaces).

Assume that for $\ga>0$, the spin configuration at time $0$ is given by $\sigma_\gamma(0, k), \; k \in \LN$, and define for $x \in \Le$ 
\begin{align*}
\Xng(x) = \dg^{-1}\sum_{y \in \Le} \eg^2 \Kg(x -y) \,\sigma_\gamma(0, \eg^{-1}y ) \;.
\end{align*}
We smoothly extend $\Xng$ (in the way described above) to  $\T^2$ which is still denoted by $\Xng$. Let $\Xg(t,x), \, t \geq 0, \, x \in \Le^2$ be defined by \eqref{e:def-X} and extend $\Xg(t, \cdot)$ to $ \T^2$, still denoted by $\Xg$. 

Define 
\begin{equ} \label{e:valueCGG}
\CGG \eqdef 
\frac{1}{4\beta_c}
\sum_{\substack{\om \in \{-N, \ldots, N \}^2\\ \om \neq 0 }} 
	\frac{|\hKg(\om)|^2}{ \ga^{-b}  (1 - \hKg(\om))} \;,
\end{equ}
where $\hKg(\omega)=\sum_{x\in\Le } \eps^2 K_\ga(x)e^{-i\pi\omega\cdot x}$ is the Fourier transform of $K_\ga$,
$b=2$ in the first regime and $b=4$ in the second regime.

The main result of this article is the following.

\begin{theorem}\label{thm:Main}
Suppose that the precise value of $\CGG$ is given by 
\eqref{e:valueCGG}, and that $\Xng$ converges to $\Xn$ in $\Ca$ for $\al>0$ small enough and that  $\Xn, \, \Xng$ are uniformly bounded in  $ \Cc^{-\al +\ka}$ for an arbitrarily small $\ka>0$.

(1) Assume that the scaling exponents $\eps,\alpha,\delta$ satisfy \eqref{e:Phi4-scaling} and the parameters $a=e^\theta,\beta$ satisfy \eqref{e:Phi4-beta-a} for some $(a_c,\beta_c)$ and $\mathfrak a_1\in \R$ such that
\begin{equ} \label{e:critical-curve}
\frac{2a_c}{2a_c+1} \beta_c -1=0 \;.
\end{equ}
If $a_c > \frac14$,  then $\Xg$ converges in law to the solution of the following dynamical $\Phi^4$ equation:
\begin{equ}
dX= \big(  \Delta X+ \mathfrak a_1 X -\frac{a_c(4a_c-1) \beta_c^3 }{3(2a_c+1)^2}  X^3 \big)\, dt  + \sqrt{2/\beta_c} \,dW
\qquad X(0)=X^0 \;.
\end{equ}

(2) Under the same assumption in (1),
if $a_c = \frac14$,  then $\Xg$ converges in law to the linear equation:
\begin{equ}
dX= \big(  \Delta X+ \mathfrak a_1 X \big)\, dt  + \sqrt{2/3} \,dW
\qquad X(0)=X^0 \;.
\end{equ}

(3) Assume that the scaling exponents $\eps,\alpha,\delta$ satisfy \eqref{e:Phi6-scaling} and the parameters $a=e^\theta,\beta$ satisfy \eqref{e:tune-tri} for some $\mathfrak a_1, \mathfrak a_3 \in \R$ and in particular
\begin{equ} \label{e:tricritical-point}
(a,\beta) \to (1/4,3) \qquad \mbox{as } \gamma \to 0 \;.
\end{equ}
Then as $\gamma\to 0$,  $\Xg$ converges in law to the solution of a dynamical $\Phi^6$ equation:
\begin{equ}
dX= \big(  \Delta X+ \mathfrak a_1 X + \mathfrak a_3  X^3
	-\frac{9}{20 } X^5 \big)\, dt  + \sqrt{2/3} \,dW
\qquad X(0)=X^0 \;.
\end{equ}
All the above convergences are with respect to the topology of $\Dd(\R_+, \Ca)$. 
\end{theorem}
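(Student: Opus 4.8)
The plan is to run the discrete Da Prato--Debussche scheme of \cite{MourratWeber}, the new ingredient being a coupling argument that identifies the limiting white noise. Fix $T>0$; it suffices to establish convergence in law on each $\Dd([0,T],\Ca)$, and since the limiting objects are continuous this is the relevant mode of convergence. Starting from \eqref{e:evolution2} I would reorganize the polynomial drift into renormalized Hermite powers built on $\CGG$ of \eqref{e:valueCGG}, as in \eqref{e:reorg-Hermite-1} in the first regime and \eqref{e:reorg-Hermite} in the second, and decompose
\[
\Xg(t) = \Zg(t) + \Pg{t}\Xng + v_\gamma(t),
\]
where $\Pg{t}$ is the heat semigroup generated by $\Delta_\gamma$, the process $\Zg$ solves the linearized discrete equation
\[
d\Zg = \bigl( \Delta_\gamma \Zg + \mathfrak a_1^{(\gamma)}(t)\, K_\gamma\star_\eps \Zg \bigr)\,dt + dM_\gamma, \qquad \Zg(0)=0,
\]
with $\mathfrak a_1^{(\gamma)}(t)$ the discrete analogue of the time-dependent coefficient in \eqref{e:a-and-at}, and $v_\gamma$ solves a discrete version of the remainder equation \eqref{e:vequation} in which each $\widetilde Z^{:m:}$ is replaced, as in \eqref{e:def-tilde-n}, by the version of $\ZG{m}(t,x)=H_m(\Zg(t,x),\CGG(t))$ incorporating $\Pg{t}\Xng$, and which carries the extra error $K_\gamma\star_\eps E_\gamma$ from \eqref{e:def-Eg}.

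\textbf{The crux is the convergence of the linearized dynamics.} I would show that $(\Zg,\ZG{2},\dots,\ZG{2n-1})$ converges in law in $\bigl(\Cc([0,T],\Ca)\bigr)^{2n-1}$ to $(Z,\ZZ{2},\dots,\ZZ{2n-1})$, with $n=2$ in the first regime and $n=3$ in the second. Tightness of $\Zg$ and convergence of its finite-dimensional distributions come from the martingale representation \eqref{e:evolution2} together with moment bounds on $M_\gamma$, after which the Wick powers are handled by the discrete equivalence-of-moments / Wick-calculus arguments of \cite{MourratWeber}. The genuine obstacle --- and the reason the present argument departs from \cite{MourratWeber} --- is that the bracket $\langle M_\gamma(\cdot,x)\rangle_t$ is \emph{not} a deterministic constant up to a negligible error: it equals $\frac{\eps^2}{\delta^2\alpha}$ times a configuration-dependent functional of the jump rates \eqref{e:c-gamma}, and to recover the noise $\sqrt{2/\beta_c}\,dW$ in the limit one must prove that this bracket self-averages over macroscopic space--time windows. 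I would do this by coupling $\sigma(t,k)$ with an auxiliary field $\tilde\sigma(t,k)$ that has the same local jump kernel but trivial large-scale feedback: $\tilde\sigma$ can be analyzed directly and its bracket averages by a law-of-large-numbers estimate, while $\sigma(t,k)=\tilde\sigma(t,k)$ on a set of $(t,k)$ of full asymptotic density, which transfers the averaging to $M_\gamma$. In parallel one checks that $\CGG\to\Ce$ --- more precisely that $\beta_c\CGG-\sum_{0<|\om|<\gamma^{-1}}(4\pi^2|\om|^2)^{-1}$ stays bounded, a Riemann-sum estimate for the Fourier symbol $1-\hKg(\om)$ of $\Delta_\gamma$ --- and that the tuning \eqref{e:Phi4-beta-a} or \eqref{e:tune-tri} forces the renormalized drift coefficients in the discrete equation to converge to the coefficients $\mathfrak a_{2k-1}(t)=\lim_{\eps\to0}\mathfrak a_{2k-1}^{(\eps)}(t)$ of \eqref{e:vequation}, using Lemma~\ref{lem:ak-ake}.

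\textbf{Next, the remainder equation and passage to the limit.} From \eqref{e:def-Eg}, $E_\gamma$ is a sum of terms $\frac{\delta^{2k}}{\alpha}X_\gamma^{2k+1}$ with $k\ge n$, and $\frac{\delta^{2k}}{\alpha}\to0$ under \eqref{e:Phi4-scaling} / \eqref{e:Phi6-scaling}, so $K_\gamma\star_\eps E_\gamma$ vanishes in the relevant norm --- but this norm must be an $L^p$ rather than the $L^\infty$ norm of \cite{MourratWeber}, because the coupling above leaves an exceptional-set error only controlled in $L^p$; accordingly I would rerun the harmonic-analytic estimates of \cite{MourratWeber} (Besov embeddings, Schauder estimates for the discrete heat flow, paraproduct/product bounds) in the $L^p$-based spaces. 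Feeding the convergence of the linearized objects into the discrete remainder equation, using $\Xng\to\Xn$ in $\Ca$ and the Lipschitz continuity of the solution map $\Ss_T$ from Theorem~\ref{thm:ContSolution}, a Skorokhod-representation plus continuous-mapping argument gives $v_\gamma\to v$ in $\Cc([0,T],\Cc^{2-\al-\ka}(\T^2))$; combined with tightness of $\Zg$ this yields tightness of $\Xg$, and $\Xg=\Zg+\Pg{t}\Xng+v_\gamma\to Z+\Pg{t}\Xn+v=X$ in law on $\Dd([0,T],\Ca)$, with $X$ the announced $\Phi^{2n}$ solution by Theorem~\ref{thm:ContSolution}.

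\textbf{Finally, the three cases.} Part (1) is the above with $n=2$, scaling \eqref{e:Phi4-scaling}, and \eqref{e:critical-curve}--\eqref{e:Phi4-beta-a}; there the cubic coefficient $B_{\beta,\theta}\delta^2/\alpha$ converges to $B_{\beta_c,\theta_c}=-a_c(4a_c-1)\beta_c^3/\bigl(3(2a_c+1)^2\bigr)$, which is strictly negative precisely when $a_c>\frac14$, as required for Theorem~\ref{thm:ContSolution}. Part (2) is the degenerate subcase $a_c=\frac14$: then $B_{\beta,\theta}\delta^2/\alpha\to0$, the nonlinearity drops out, $v_\gamma\to0$, and $\Xg$ converges to the Ornstein--Uhlenbeck limit directly from the convergence of $\Zg$. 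Part (3) is the same argument with $n=3$ and scaling \eqref{e:Phi6-scaling}; \eqref{e:tune-tri} is arranged so that the coefficients of $(X^3-3\CGG X)$ and of $X$ converge to $\mathfrak a_3$ and $\mathfrak a_1$ while the leading coefficient $C_{\beta,\theta}\delta^4/\alpha$ converges to $C_{\beta_c,\theta_c}=-\frac{9}{20}<0$, which is what makes the limiting $\Phi^6$ equation globally well-posed in Theorem~\ref{thm:ContSolution}. The decisive difficulty throughout is the coupling step that upgrades the configuration-dependent martingale bracket to the constant $2/\beta_c$ in the limit.
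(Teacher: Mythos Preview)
Your proposal follows the same overall strategy as the paper --- discrete Da Prato--Debussche, coupling for the martingale bracket, $L^p$-based estimates --- and you have correctly identified the decisive new ingredient. There are, however, a few technical points where your outline diverges from or omits what the paper actually does, and two of them matter.

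\textbf{Stopping time.} You never introduce the stopping time $\taun=\inf\{t:\|X_\gamma(t)\|_{\Ca}\ge \nn\}$, but the whole coupling machinery hinges on it: the rates $c_\gamma$ are close to the constant values $(e^{\theta_c}/\CN_c,1/\CN_c,e^{\theta_c}/\CN_c)$ only because $|h_\gamma|=\delta|X_\gamma|\le C\gamma^{1-3\al}\|X_\gamma\|_{\Ca}$, so without the a~priori bound $\|X_\gamma\|_{\Ca}\le\nn$ the probability that $\sigma\neq\tilde\sigma$ after a jump is not $O(\gamma^{1-3\al})$ and the bracket-averaging fails. The paper works throughout with stopped processes $\sigma_{\gamma,\nn}$, $M_{\gamma,\nn}$, $Z_{\gamma,\nn}$, proves convergence of $X_{\gamma,\nn}$, and removes $\nn$ only at the very end by showing $\P(\taun>T)\to 1$ as $\nn\to\infty$ uniformly in $\gamma$.

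\textbf{The comparison step.} Your sentence ``Skorokhod-representation plus continuous-mapping gives $v_\gamma\to v$'' elides the nontrivial part. The continuous mapping theorem applied to $\Ss_T$ yields convergence of $\bar v_{\gamma,\nn}:=\Ss_T(Z_{\gamma,\nn},\ldots,Z_{\gamma,\nn}^{:\bar n:})$, not of the discrete remainder $v_{\gamma,\nn}$, which solves a \emph{different} equation carrying the errors $\msf{Err}^{(1)}$ (Taylor remainder, coefficient mismatches, high-frequency commutator with $\Ex$, $\CGG$ vs.\ $\CGG(s)$, and the stochastic bracket error $\tilde E$) and $\msf{Err}^{(2)}$ (iterated integrals vs.\ Hermite polynomials). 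The paper closes this gap by a Gronwall-type estimate (Lemma~\ref{lem:preGronwall}) showing $\|v_{\gamma,\nn}-\bar v_{\gamma,\nn}\|_{\Cc^{1/2}}\to 0$ on a good event; this is where the $L^p$ bound on $\tilde E$ from Proposition~\ref{prop:averageA} is actually consumed.

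\textbf{Minor inaccuracies.} The linearized process $Z_\gamma$ in the paper is the pure stochastic convolution $dZ_\gamma=\Delta_\gamma Z_\gamma\,dt+dM_\gamma$, without the extra linear drift $\mathfrak a_1^{(\gamma)}(t)K_\gamma\star_\eps Z_\gamma$ you wrote; that term stays in the remainder equation. Also, the Wick powers $Z_\gamma^{:m:}$ are \emph{defined} as iterated stochastic integrals \eqref{e:ZnA}--\eqref{e:DefZn} (so that BDG furnishes the moment bounds in Proposition~\ref{prop:RGBoundOne}) and only afterwards shown to be close to $H_m(Z_\gamma,[R_\gamma]_s)$; your direct Hermite definition would not obviously inherit those bounds, since $Z_\gamma$ is not Gaussian.
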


\begin{remark} \label{rem:2overbeta}
Note that the coefficient
$\sqrt{2/\beta_c}$ in front of the white noise in the limiting equations
makes the interpretation of  $\beta$ as ``inverse temperature"
more meaningful.
This means that the quadratic variation of our martingale 
should behaves like $2/\beta_c$ times the Dirac distribution.
The quadratic variation will depend on the spin configuration $\sigma$
and in the following proofs
we will approximate $\sigma$ by an i.i.d. spin system $\tilde \sigma$
so that at each site $\P(\tilde\sigma = \pm 1) = e^{\theta_c}/\mathcal N_c$ and $\P(\tilde\sigma = 0) = 1/\mathcal N_c$
where $\mathcal N_c=1+2e^{\theta_c}$.
(Recall that $\theta$ has the interpretation of ``chemical potential"
i.e. the ``ratio" between $\pm 1$ and $0$ spins.)
On average (over $\tilde\sigma\in\{-1,0,+1\}$) the quadratic variation
will then be shown as equal to (see \eqref{e:def-c-theta})
\[
 \frac{4e^{\theta_c}}{1+2 e^{\theta_c}} =\frac{2}{\beta_c}
 \]
where the last equality is by \eqref{e:critical-curve}
or \eqref{e:tricritical-point}.
\end{remark}

\begin{remark}
The limiting equations in the theorem are globally well-posed, see the paper
\cite{MourratWeberGlobal}, especially Remark~1.5 there. Actually, in case (1), if $a_c<\frac14$, one can still prove that $X_\ga$ converges to a $\Phi^4 $ equation, but with a plus sign in front of $X^3$, which may blow up in finite time.
\end{remark}


\section{Convergence of the linearized equation}
\label{sec:linear}

To prove the convergence result Theorem~\ref{thm:Main}
we rewrite our discrete evolution in the Duhamel's form:
\begin{equs}  [e:mildBEG6]
\Xg(t,\cdot) =&\Pg{t} \Xng + \int_0^t \Pg{t-s}  \Kg \star  
	 \Big( \frac{C_{\beta,\theta}\delta^4}{\alpha}\Xg^5  (s,\cdot) 
	 +\frac{B_{\beta,\theta}\delta^2}{\alpha}  \Xg^3  (s,\cdot) \\
+ & \frac{A_{\beta,\theta}}{\alpha}  \Xg(s,\cdot)
 + E_\gamma(s,\cdot) \Big) \, ds  + \int_{s=0}^t  \Pg{t-s} \, d\Mg (s,\cdot)  
 \qquad \mbox{on } \Lambda_\eps
\end{equs}
where the coefficients are defined in \eqref{e:ABC},
and $P_t^\gamma$ is the heat operator associated with $\Delta_\gamma$. Recall that the martingale $\mg$ was 
defined above in \eqref{e:evolution1} and the rescaled martingales $\Mg(t,z) = \frac{1}{\delta}\mg(\frac{t}{\alpha},\frac{z}{\eps})$ are defined on a rescaled grid $\Le \subseteq [-1,1]^2$.
%
An important step of proving convergence of \eqref{e:mildBEG6}
is to show convergence of the linearized system.
 For $x \in \Le$, we denote by 
\begin{equ} [e:Z-gamma]
\Zg(t,x) \eqdef \int_{r=0}^t  \Pg{t-r} \, d\Mg (r,x) 
\end{equ}
the stochastic convolution appearing as the last term  of \eqref{e:mildBEG6}. The process $\Zg$ is the solution to the linear stochastic equation
\begin{align}
d\Zg(t,x) &= \Dg \Zg(t,x) dt + d\Mg(t,x)\notag\\
 \Zg(0,x)& = 0 \, , \label{e:DefZg}
\end{align}
for $x\in \Le,\: t \geq 0$. 
As discussed in \eqref{e:Extension}, we extend $Z_\gamma$
to the entire torus $\T^2$ and still denote it by $Z_\gamma$.
The tightness of the family $\Zg$ with respect to the topology of  $\Dd(\R_+,\Ca) $ is established below in Prop.~\ref{prop:tight}. 
In this section we assume this result and  prove the convergence in law of $\Zg$   to the solution of the stochastic heat equation.

The predictable quadratic covariations  of the martingales $\mg(\cdot, k)$  are given by 
%
%
\begin{align}
&\langle \mg(\cdot, k) , \mg(\cdot, j) \rangle_t \notag \\
&= \int_0^t \sum_{\ell \in \LN} \kg (k - \ell) \kg (j - \ell)  \sum_{\bar{\sigma} \in \{ \pm 1, 0\}} (\bar{\sigma} - \sigma(s,\ell))^2 
\cg(\si(s), \ell, \bar{\sigma}) ds. \label{Gauss1}
\end{align}
Following the reasoning from \cite{MourratWeber} we first construct a modified version of the martingales $\Mg$ and the approximate stochastic convolution $\Zg$ for which we
have a  better control on this quadratic variation. 
To this end, we first  define the stopping time $\taun$ for a fixed  $\al \in (0,\frac12)$, any $\nn>1$ and $0 <\ga <1$, 
\begin{equation}
\label{e:deftaug}
\taun \eqdef \inf \big\{ t \ge 0 : \|\Xg(t,\cdot)\|_{\Ca} \ge \nn \big\} \;.
\end{equation}
For $k \in \LN$ and for $t \geq 0$, define
\begin{equation*}
\sgn(t,k) \eqdef 
\left\{
\begin{array}{ll}
\sigma(t,k)  & \text{if } t <   \frac{\taun}{\ag}, \\
\ssgn(t,k) & \text{otherwise}\;.
\end{array}
\right.
\end{equation*}
Here $\ssgn$ is a spin system with $\ssgn(\taun/\ag,k ) =\si(\taun^-/\ag,k )$, and  for every $t > \taun/\ag$ and every $k \in \LN$ the jumps to spin values $+1,0,-1$ at rates
 $\frac{e^{\theta_c}}{\CN_c},\frac{1}{\CN_c},\frac{e^{\theta_c}}{\CN_c}$ respectively, 
 independently from $\si$, with $\CN_c = 1 + 2 e^{\theta_c}$. 
 (Recall that $\theta_c$ is a critical value of $\theta$ as in Section~\ref{sec:model-formal}.)
In other words, the rate function $\cg$ is replaced by
\begin{equation}\label{e:raten}
\cgn^s (\sigma(s),k,\bar\sigma) =   
\left\{
\begin{array}{ll}
 \cg (\sigma(s),k,\bar\sigma)  & \text{if } s <   \frac{\taun}{\ag} \;, \\
 (\frac{e^{\theta_c}}{\CN_c},\frac{1}{\CN_c},\frac{e^{\theta_c}}{\CN_c}) & \text{otherwise}\;
\end{array}
\right.
\end{equation}
where in the second case, $\cgn^s (\sigma(s),k,\bar\sigma)$ is independent of the configuration $\sigma(s)$ and the site $k$ and thus only depends on $\bar\sigma$; so we only defined its values on the three points $\bar\sigma=1,0,-1$. 
 We now construct processes $\Mgn$ and $\Zgn$ following exactly the construction of $\Mg$ and $\Zg$ with $\si_\ga$ replaced by $\sgn$. 

Define the rescaled rate function
\begin{equ} [e:def-rateC]
\Cgn(s,z,\bar \sigma) \eqdef \cgn^{s/\ag}(\sgn(s/\ag), z/\eps,\bar \sigma) 
\end{equ}
for every $s\ge 0$, $z\in\Lambda_\eps$ and $\bar\sigma\in\{+1,0,-1\}$.
Of course $\Cgn(s,z,\bar \sigma)$ still depends on the configuration $\sgn$ but we suppress this dependence in the notation now.
For the  martingales $\Mgn(t,z)$, Eq. \eqref{Gauss1} turns into 
\begin{align}
&\langle \Mgn(\cdot, x ), \Mgn (\cdot, y) \rangle_t  \notag\\
& =\frac{\eps^2}{\delta^2\alpha} \int_0^t  \!\! \sum_{z \in \Le}\eg^2  \Kg( x -z) \, \Kg(y-z) 
\sum_{\bar{\sigma} \in \{ \pm 1,0\} } (\bar{\sigma} - \sigma(\alpha^{-1} s, \eps^{-1}z))^2 \Cgn\big(s,  z, \bar{\sigma}) \, ds. \label{Gauss2}
\end{align} 
Recall that the kernel $\Kg(x) = \eg^{-2} \kg(\eg^{-1}x) $ is a rescaled version of $\kg$ that behaves like an approximation of Dirac distribution $\delta$; thus we obtain $\eps^4$ when rescaling the two factors $\kappa_\ga$ but have moved an $\eps^2$ into the sum to anticipate that the sum over $z$ approximates $\delta(x-y)$, possibly times a constant.
Since $\delta=\ga$ in both ``scaling regimes", we can also write the coefficient in front of the integral as  $\co^2=\frac{\eps^2}{\ga^2\alpha}$ which was defined in \eqref{e:const-c-gamma}.
The constant $\co$ is close to $1$.
\begin{lemma}\label{Lemma3.1}
 The rates $\Cgn$ defined in \eqref{e:def-rateC} satisfy
\begin{align*}
\Cgn(s, z, \pm 1) &= \frac{e^{\theta_c}}{\CN_c} + E_\gamma \\
\Cgn(s, z, 0) &= \frac{1}{\CN_c} + E'_\gamma
\end{align*}
for every $s\ge 0$, $z\in\Lambda_\eps$,
where $\CN_c = 1 + 2 e^{\theta_c}$ and the random terms $E_\gamma, E'_\gamma$ which depend on $s,z$  are deterministically bounded by $C \gamma^{1 - 3 \nu}$
with constant $C$ depending linearly on $\nn$.
The un-rescaled rates $\cgn^s (\sigma(s),k,\bar\sigma)$ satisfy the same estimates for 
 every $s\ge 0$, $k\in\LN$ and $\bar\sigma\in\{\pm 1,0\}$.
\end{lemma}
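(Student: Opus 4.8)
\emph{Proof strategy.} The plan is to evaluate the rate $\Cgn$ separately in the two cases of the definition \eqref{e:raten} and to Taylor expand (to first order) the non-trivial one around the reference point $h_\gamma=0$, $\theta=\theta_c$; the only substantial ingredient is a \emph{deterministic} pointwise bound on the locally averaged field $h_\gamma$, which holds precisely because of the stopping time $\taun$. First, whenever the rescaled time $s$ satisfies $s/\ag\ge\taun/\ag$, the value $\Cgn(s,z,\bar\sigma)$ equals, by \eqref{e:def-rateC}, \eqref{e:raten} and the construction of $\ssgn$, exactly $\tfrac{e^{\theta_c}}{\CN_c}$ for $\bar\sigma=\pm1$ and $\tfrac1{\CN_c}$ for $\bar\sigma=0$, so the error terms vanish identically. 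It therefore suffices to treat $s/\ag<\taun/\ag$, where $\sgn$ agrees with $\si$ up to time $s/\ag$, so that by \eqref{e:def-rateC} and \eqref{e:c-gamma}, $\Cgn(s,z,\bar\sigma)=c_\gamma(\si(s/\ag),z/\eps,\bar\sigma)=e^{\bar\sigma\beta h+\bar\sigma^2\theta}/\CN_{\beta,\theta}(h)$, where $h$ denotes $h_\gamma(\si(s/\ag),z/\eps)$.

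The key estimate is that, on the event $\{s/\ag<\taun/\ag\}$, one has $|h|\lesssim\nn\,\ga^{1-3\al}$. Indeed, then $\|\Xg(s,\cdot)\|_{\Ca}<\nn$ by \eqref{e:deftaug}, and $\Ex\Xg(s,\cdot)$ is a trigonometric polynomial of degree at most $N$; decomposing it into Littlewood--Paley blocks (only $O(\log N)$ of which are non-zero) and summing the geometric series $\sum_j 2^{j\al}$, which since $2^\al>1$ is comparable to its top term $\asymp N^\al$, yields the Bernstein-type bound $\|\Xg(s,\cdot)\|_{L^\infty(\T^2)}\lesssim N^{\al}\|\Xg(s,\cdot)\|_{\Ca}\le N^\al\nn$ (recall $\Ca=B^{-\al}_{\infty,\infty}$; cf.\ \cite[Appendix~A]{MourratWeber}). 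Since $N\le\ga^{-3}$ in both regimes \eqref{e:Phi4-scaling}--\eqref{e:Phi6-scaling}, since $\dg=\ga$ there, and since $h_\gamma(\si(s/\ag),z/\eps)=\dg\,\Xg(s,z)$ for $z\in\Le$ by \eqref{e:def-X}, this gives $|h|\le\ga\,\|\Xg(s,\cdot)\|_{L^\infty}\lesssim\nn\,\ga^{1-3\al}$ (complementing the trivial bound $|h|\le1$, which follows from $\kg\ge0$, $\sum_k\kg(k)=1$ and $\si\in\{0,\pm1\}$). I expect this step to be the main obstacle: it is the one place where one has to match the polynomial degree $N$ of the fields against the correct powers of $\ga$; the remainder is bookkeeping.

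Given this, the conclusion follows from a Lipschitz bound. The map $(h,\theta)\mapsto e^{\bar\sigma\beta h+\bar\sigma^2\theta}/\CN_{\beta,\theta}(h)$ is $\Cc^1$ with derivatives bounded uniformly over $|h|\le1$ and $(\beta,\theta)$ in a fixed neighbourhood of $(\beta_c,\theta_c)$ by a constant independent of $\nn$, so
\[
\Big|c_\gamma(\si(s/\ag),z/\eps,\bar\sigma)-\frac{e^{\bar\sigma^2\theta_c}}{1+2e^{\theta_c}}\Big|\le C\big(|h|+|\theta-\theta_c|\big)\le C\nn\,\ga^{1-3\al}+C\ga^2\;,
\]
where $|\theta-\theta_c|=O(\ga^2)$ because $a=e^\theta$ obeys $|a-a_c|=O(\ga^2)$ by the tuning \eqref{e:Phi4-beta-a} (resp.\ \eqref{e:tune-tri}), with $a_c=e^{\theta_c}$. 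As $\tfrac{e^{\theta_c}}{1+2e^{\theta_c}}=\tfrac{e^{\theta_c}}{\CN_c}$ and $\tfrac1{1+2e^{\theta_c}}=\tfrac1{\CN_c}$, and $\ga^2\le\ga^{1-3\al}$ for $\ga<1$, this is precisely the claimed identity, with $|E_\gamma|,|E'_\gamma|\le C\nn\,\ga^{1-3\al}$; the dependence of $C$ on $\nn$ is linear because the entire error originates from the first-order term $C|h|$. Finally, running the same computation with $\si(s)$, $k$ and $\|\Xg(\ag s,\cdot)\|_{\Ca}<\nn$ (valid since $s<\taun/\ag$ forces $\ag s<\taun$) in place of $\si(s/\ag)$, $z/\eps$ and $\|\Xg(s,\cdot)\|_{\Ca}<\nn$ yields the stated bound for the un-rescaled rates $\cgn^s(\si(s),k,\bar\sigma)$.
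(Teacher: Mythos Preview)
Your proof is correct and follows essentially the same route as the paper's: split according to the stopping time, use the $\Ca\to L^\infty$ embedding for band-limited functions to bound $h_\gamma$ pointwise, and then a first-order Lipschitz expansion of the rate around $(h,\theta)=(0,\theta_c)$. One minor imprecision: in the second scaling regime the tuning \eqref{e:tune-tri} only yields $|a-a_c|=O(\ga^2\log\ga^{-1})$ (because of the logarithmically divergent $\CGG$), not $O(\ga^2)$ as you write; this is still dominated by $\ga^{1-3\al}$, so the conclusion is unaffected.
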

\begin{proof}
By \eqref{e:def-rateC} it suffices to prove the stated estimates for $\Cgn$ and that for $\cgn$ immediately follow.
For $t > \taun$, we have $E_\gamma=E_\gamma'=0$ by definition.
For $t \le \taun$, first of all, we note that
$(\frac{e^{\theta}}{\CN},\frac{1}{\CN},\frac{e^{\theta}}{\CN})$ with $\CN = 1 + 2 e^{\theta}$ are nothing but the values
of $c_\ga$ defined in \eqref{e:c-gamma} for $\beta h_\ga=0$ at the three points $\bar\sigma=1,0,-1$.
Since the derivatives of the functions $\frac{x}{1+2x}$ and $\frac{1}{1+2x}$ are both bounded by $2$, the error caused by replacing 
$(\frac{e^{\theta}}{\CN},\frac{1}{\CN},\frac{e^{\theta}}{\CN})$
by
$(\frac{e^{\theta_c}}{\CN_c},\frac{1}{\CN_c},\frac{e^{\theta_c}}{\CN_c}) $
is bounded by $2|e^\theta-e^{\theta_c}| $; by the discussion above \eqref{e:Phi4-beta-a} (for the first scaling regime) or \eqref{e:tune-tri} (for the second scaling regime), this error is bounded by $C\gamma^{1 - 2 \nu}$.

Furthermore, it is easy to check by \eqref{e:c-gamma} that for any $\bar\sigma(j)\in\{\pm 1, 0\}$ and any $\theta\in \R$,
the rate $c_\gamma$ 
viewed as a function  of $\beta h_\gamma$ has the derivative:
\begin{equ} 
	 \frac{ 
		e^{ \bar\sigma(j) \beta h_\gamma+\bar\sigma(j)^2\theta}
		\Big( \bar\si(j) (e^{-\beta h_\ga+\theta} + 1 + e^{\beta h_\ga+\theta})
		 +e^{-\beta h_\ga+\theta}- e^{\beta h_\ga+\theta} \Big)} 
	{(e^{-\beta h_\ga+\theta} + 1 
	+ e^{\beta h_\ga+\theta})^2}
	 \;,
\end{equ}
which is
bounded by $2$. 
Therefore for $t < \taun$, 
\begin{equs}
[e:cg-control]
|E_\gamma| \vee |E_\gamma'|
&\le 2  \bg \big| \hg(\si(t/\ag),z/\eg)  \big| + C\gamma^{1-2\ka} =2 \bg \dg \big| \Xg(t,z) \big| + C\gamma^{1-2\ka} \\
&\le C(\al) \gamma^{1-3\al} (\|\Xg(t)\|_{\Ca}+1)\;.
\end{equs}
In the last step of \eqref{e:cg-control} we used the fact that
$\dg = \gamma$ in both scaling regimes; $\bg \le 4$ for $\gamma$ sufficiently small since in all three cases of Theorem~\ref{thm:Main} $\beta_c\le 3$; and the fact  that since the Fourier coefficients of $\Xg$ 
with frequency larger than $\gamma^{-2}$ (resp. $\gamma^{-3}$) vanish, by  \cite[Lemma~A.3]{MourratWeber},
$
\|\Xg(t)\|_{L^\infty} \le C \gamma^{-b\al} \|\Xg(t)\|_{\Ca}
$ with $b=2$ in the first regime (resp. $b=3$ in the second regime).
\end{proof}
This lemma allows to rewrite the last terms appearing in \eqref{Gauss2} as
\begin{align}\label{e:Gauss3}
\sum_{\bar{\sigma} \in \{ \pm 1,0\} } (\bar{\sigma} - \sigma(\alpha^{-1} s, \eps^{-1}z))^2 \Cgn\big(s,  z, \bar{\sigma}) = A(\sigma(\alpha^{-1} s, \eps^{-1}z)) + E_\gamma'',
\end{align}
where the error $E_\gamma''$ is again deterministically 
bounded by $C \gamma^{1 - 3 \nu}$ (for a constant  $C$ which depends on  $\nn$) and $A$ is a function defined on three points $\{+1,0,-1\}$ as following
\begin{align}\label{e:Gauss4}
A(\sigma)
=
\begin{cases}
2 e^{\theta_c}/\CN_c  \qquad  &\text{for } \sigma = 0\\
4 e^{\theta_c}/\CN_c + 1/\CN_c   \qquad  &\text {for } \sigma = \pm 1
\end{cases}
\end{align}
where $\CN_c = 1 + 2 e^{\theta_c}$ as before.
The main ingredient in the proof of  Theorem~\ref{t:converg-lin} below is to show that the dependence on the microscopic configuration $\sigma(t,x)$
in this expression becomes irrelevant when averaging over long time intervals,
and that $A$ may be replaced by its average.
%
%
%

Before stating Theorem~\ref{t:converg-lin}, we 
define a coupling between the microscopic spin process $\sigma(s, k)$ with an extremely simple  auxiliary
spin process $\tilde{\sigma}(s, k)$. For every given site $k\in\LN$ the spin  $\tilde{\sigma}(\cdot, k)$ gets updated 
at the same random times as the original process $\sigma(\cdot, k)$ but the update is determined according to a fixed 
probability distribution $\tilde{P}$ on $\{\pm 1, 0 \}$ independently of the values of both $\sigma$ and $\tilde{\sigma}$ and independently 
of other sites, which motivated by Lemma~\ref{Lemma3.1} is given by
\begin{align} \label{e:def-tilde-P}
\tilde{P} = 
\left(
\begin{array}{c}
 e^{\theta_c}/ \CN_c \\
 1/ \CN_c \\
 e^{\theta_c}/ \CN_c \\
\end{array}
\right).
\end{align}

This process $\tilde{\sigma}$ does not capture any of the subtle large scale non-linear effects of the field $\sigma$ described 
in our main result, but for any given site it coincides with $\sigma$ for many times which allows to replace $\sigma$ with $\tilde{\sigma}$ 
 below (see e.g. \eqref{e:Epppp}). 
The advantage of  this replacement is
that one can then average over $\tilde \sigma \in\{-1,0,+1\}$:
indeed,  note that by \eqref{e:critical-curve} and \eqref{e:tricritical-point} and the definition \eqref{e:Gauss4} for $A$ 
\begin{equ} \label{e:def-c-theta}
 \tilde \E A(\tilde\sigma(r,k))=\frac{e^{\theta_c}}{\CN_c} A(-1) 
	+ \frac{e^{\theta_c}}{\CN_c} A(1) +\frac{1}{\CN_c} A(0)
= \frac{4e^{\theta_c}}{1+2 e^{\theta_c}} =\frac{2}{\beta_c} \;,
\end{equ}
where $\tilde \E$ denotes the expectation with respect to $\tilde{P}$.
This is essentially the reason why
the pre-factor $\sqrt{2/\beta_c}$ in front of the noise of the limiting equation shows up (see Remark~\ref{rem:2overbeta}).
 In the proof of Theorem~\ref{t:converg-lin} we only make use of the averaging in time over $\tilde \sigma$. 
The proof of Proposition~\ref{prop:averageA} below then relies on the same construction and we will make use of the spatial averaging as well.

We now proceed to the construction of this coupling. By definition, for any fixed site $k \in \LN$ the process $\sigma(s, k)$ is a pure jump processes on $\{\pm 1, 0\}$. The joint law of all of these processes can be constructed as follows:
\begin{itemize}
\item For each site there is an independent Poisson clock, running at rate $1$.
\item At each jump of the Poisson clock the spin changes according to the transition probabilities given in the vector 
\begin{align*}
P(s,k) = 
\left(
\begin{array}{c}
\cgn^s(\sgn(s), k,1) \\
\cgn^s(\sgn(s), k,0) \\
\cgn^s(\sgn(s), k,-1) \\
\end{array}
\right).
\end{align*}
Of course this vector depends on the configuration of the neighboring particles at time $s$. 
\end{itemize}
The  transition probabilities of the auxiliary processes $\tilde{\sigma}(s, k), \; k \in \LN$ are fixed and given by \eqref{e:def-tilde-P}.
In order to construct the coupling, we note that 
according to Lemma~\ref{Lemma3.1} there exists a number $q$ satisfying
 \[
 1 \geq q \geq 1 -C \gamma^{1 - 3 \nu}\;,
 \]
such that  $q \tilde P \leq P$ where the inequality of the two vectors is to be understood entry by entry. Therefore, we can write
\begin{align*}
P(s,k)= q \tilde{P} + (1-q) R(s,k) ,
\end{align*}
where  $R$ is normalized to be a probability measure.
The coupling is now the following:
\begin{itemize}
\item At the initial time each of the $\tilde{\sigma}(0, k)$ is distributed according to $\tilde{P}$ and the realizations for different sites $k \neq k'$ are independent.
\item At each jump of the Poisson clock at site $k$, $\tilde{\sigma}(s, k)$ is updated according to $\tilde P$.
This update is independent from the updates at other sites as well as the 
jump times. 
\item To determine the updated spin for $\sigma(s, k)$ after the same jump of the Poisson clock, the vector $R(s,k)$ are evaluated.  It depends on the environment at the given time $s$. 
\item Toss a coin which yields $1$ with probability $q$ and $0$ with probability $1-q$. If the outcome of this toss is $1$ the spin $\sigma(s,k)$ is updated to the same value as $\tilde{\sigma}(s,k)$. 
 If the outcome is $0$ then $\sigma(s,k)$ is updated according to $R(s, k)$  independently of the update for $\tilde\sigma$. 
\end{itemize}
It is clear that the process $\tilde{\sigma}$ constructed in this way is a jump Markov chain jumping according to $\tilde{P}$ and that the processes for different sites are independent. This construction is consistent with the jumping rule of $\sigma$ (in particular $\sigma$ jumps  according to $P$).
Furthermore, for every $k\in\LN$, after each jump the probability that $\tilde{\sigma}(s,k) \neq \sigma(s,k) $ is bounded by $C \gamma^{1-3 \nu}$,
where the constant $C$ obtained from \eqref{e:cg-control} does not depend on the location $k$  and the jump-time.

To lighten the notation in the following calculation we introduce the centered random field $\bar A(\tilde\sigma(r,k)) = A(\tilde\sigma(r,k))-  \frac{2}{\beta_c}$ where $A$ was defined in \eqref{e:Gauss4}. 

\begin{lemma} \label{lem:sigtildeDecay}
For every $r, r'\ge 0$ and  $k,k'\in\LN$ we have
\begin{align*}
 \E  \bar A(\tilde\sigma(r,k))  \bar A(\tilde\sigma(r',k')) 
 	\le C \mathbf 1_{k=k'} e^{-|r-r'|}.
\end{align*}
\end{lemma}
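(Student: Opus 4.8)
The plan is to exploit the structure of the auxiliary process $\tilde\sigma$: by construction, for distinct sites $k\neq k'$ the processes $\tilde\sigma(\cdot,k)$ and $\tilde\sigma(\cdot,k')$ are independent, and at any fixed time each $\tilde\sigma(r,k)$ is distributed according to $\tilde P$, so that $\E[\bar A(\tilde\sigma(r,k))]=0$ by the computation \eqref{e:def-c-theta}. Hence for $k\neq k'$ the left hand side is exactly $\E[\bar A(\tilde\sigma(r,k))]\,\E[\bar A(\tilde\sigma(r',k'))]=0$, which is trivially bounded by $C\mathbf 1_{k=k'}e^{-|r-r'|}$. So the whole content is in the case $k=k'$, where we must show $\E[\bar A(\tilde\sigma(r,k))\,\bar A(\tilde\sigma(r',k'))]\le Ce^{-|r-r'|}$.

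For the diagonal case, assume without loss of generality $r\le r'$ and use that $\tilde\sigma(\cdot,k)$ is a single-site continuous-time Markov chain on $\{-1,0,+1\}$ with jumps at rate $1$ and post-jump law $\tilde P$ independent of everything. First I would condition on $\tilde\sigma(r,k)$ and use the Markov property at time $r$:
\begin{equ}
\E\big[\bar A(\tilde\sigma(r,k))\,\bar A(\tilde\sigma(r',k))\big]
=\E\big[\bar A(\tilde\sigma(r,k))\,\E[\bar A(\tilde\sigma(r',k))\mid \tilde\sigma(r,k)]\big].
\end{equ}
The inner conditional expectation is $(P_{r'-r}\bar A)(\tilde\sigma(r,k))$ where $P_t$ is the semigroup of the chain. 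The key elementary fact is that if no jump occurs in $[r,r']$ — an event of probability $e^{-(r'-r)}$ — the spin is unchanged, while if at least one jump occurs, the post-jump (and hence time-$r'$) value is distributed according to $\tilde P$, independently of $\tilde\sigma(r,k)$; therefore
\begin{equ}
(P_{r'-r}\bar A)(x)=e^{-(r'-r)}\bar A(x)+(1-e^{-(r'-r)})\,\tilde\E[\bar A(\tilde\sigma)]=e^{-(r'-r)}\bar A(x),
\end{equ}
using $\tilde\E[\bar A]=0$. Plugging this back gives
\begin{equ}
\E\big[\bar A(\tilde\sigma(r,k))\,\bar A(\tilde\sigma(r',k))\big]
=e^{-(r'-r)}\,\E\big[\bar A(\tilde\sigma(r,k))^2\big]\le Ce^{-|r-r'|},
\end{equ}
where $C=\max_{x\in\{-1,0,1\}}\bar A(x)^2$ is finite since $A$ takes only two values. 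This also makes the sign transparent: the diagonal covariance is nonnegative, which is consistent with the stated one-sided inequality.

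The argument is essentially routine once one observes that the exponential decay comes precisely from the $e^{-|r-r'|}$ probability of "no reset'' of the single-site clock, combined with the fact that after any reset the spin is a fresh $\tilde P$-sample with $\bar A$-mean zero; there is no real obstacle here, and the independence across sites disposes of the off-diagonal terms immediately. The only mild point to be careful about is that the initial time in Lemma~\ref{lem:sigtildeDecay} is $0$ (at which $\tilde\sigma(0,k)\sim\tilde P$), so the identity $\tilde\E[\bar A(\tilde\sigma(r,k))]=0$ holds for every $r\ge 0$, not just in stationarity — but since $\tilde P$ is by construction the law at every jump and at time $0$, the chain is in fact stationary and this is immediate. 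Thus the main "work'' is merely to record the two-value structure of $A$ and the renewal description of $\tilde\sigma$.
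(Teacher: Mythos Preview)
Your proof is correct and follows essentially the same route as the paper: both use independence across sites for $k\neq k'$, and for $k=k'$ both exploit the renewal structure of $\tilde\sigma(\cdot,k)$ --- that with probability $e^{-|r-r'|}$ no jump occurs and the spin is unchanged, while after the first jump the value is a fresh $\tilde P$-sample with $\tilde\E[\bar A]=0$. The paper splits according to whether the first jump after the earlier time occurs before or after the later time and bounds each piece separately; you package the same decomposition into the identity $P_t\bar A(x)=e^{-t}\bar A(x)$ for the single-site semigroup, which yields the exact equality $\E[\bar A(\tilde\sigma(r,k))\,\bar A(\tilde\sigma(r',k))]=e^{-|r-r'|}\,\E[\bar A(\tilde\sigma(r,k))^2]$ rather than just the upper bound.
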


\begin{proof}
Recall from the construction that for $k \neq k'$ the random variables $\tilde\sigma(r,k)$ and $\tilde\sigma(r',k')$ are independent and that therefore
for these $k \neq k'$ we have
\begin{align*}
 \E  \bar A(\tilde\sigma(r,k))  \bar A(\tilde\sigma(r',k')) =0.
\end{align*}
To get bounds in the temporal correlations for $\tilde\sigma(\cdot, k)$ for a fixed site $k$ we fix times $r'<r$ and denote by $\tau$ the 
first jump time of the Poisson clock for site $z$ after $r'$. Recall from the construction of $\tilde \sigma$ that  if $r < \tau$ the spin values of $\tilde \sigma(r, k)$ 
and $\tilde \sigma(r', k)$  are identical. The value after $\tau$ becomes independent of the value before $\tau$. 
With this discussion in mind we write
\begin{align*}
 &\E  \bar A(\tilde\sigma(r,k))\, \bar A(\tilde\sigma(r',k))\\
 &=   \E  \bar A(\tilde\sigma(r,k))^2 \mathbf{1}_{\tau > r} +  \E \bar A(\tilde\sigma(r,k))  \bar A(\tilde\sigma(r',k)) \mathbf{1}_{\tau \leq r } \;.
\end{align*}
The first term on the right hand side is bounded by
\begin{align*}
\E  \bar A(\tilde\sigma(r,k))^2 \mathbf{1}_{\tau >r}  \leq \sup_{\bar \sigma \in \{ \pm 1, 0 \}} |A (\bar \sigma)|^2\; \P (\tau >r) \leq  C e^{-|r-r'|}.
\end{align*}
For the second term we write
\begin{align*}
 &\E \bar A(\tilde\sigma(r,k))  \bar A(\tilde\sigma(r',k)) \mathbf{1}_{\tau \leq r } \\
 &= \E  \bar A(\tilde\sigma(r',k)) \mathbf{1}_{\tau \leq r }  \E\big( \bar A(\tilde\sigma(r,k))  \big| \mathcal{F}_\tau \big) =0,
\end{align*}
where $\mathcal{F}_\tau$ is the sigma algebra generated by $\tilde{\sigma}(\cdot, k)$ up to the stopping time $\tau$.
\end{proof}

\begin{theorem}[Convergence of $\Zg$]
\label{t:converg-lin}
Let $\al \in (0,1/2) $ and $\nn > 1$.  As $\gamma$ tends to $0$, the processes $\Zgn$ converge in law to $Z$ with respect to the Skorokhod topology on $\Dd(\R_+,\Ca)$, where $Z$ is defined as 
\begin{equ} 
 Z(t,\cdot) \eqdef \sqrt{2/\beta_c} \int_0^t P_{t-s} \, dW(s, \cdot) \;.
\end{equ} 
\end{theorem}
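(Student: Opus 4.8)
\textbf{Proof strategy for Theorem~\ref{t:converg-lin}.}
The plan is to follow the standard martingale-convergence route to a Gaussian limit, isolating the one genuinely new difficulty (the non-constant quadratic variation) and dispatching it via the coupling with $\tilde\sigma$. First I would recall that tightness of $\{\Zgn\}$ in $\Dd(\R_+,\Ca)$ is granted by Prop.~\ref{prop:tight}, so it suffices to identify the limit: any subsequential limit must coincide with $Z=\sqrt{2/\beta_c}\int_0^t P_{t-s}\,dW(s,\cdot)$. Because $Z$ is Gaussian and characterized by its covariance, I would test against smooth functions $\ph$ on $\T^2$ and show that $\langle \Zgn(t,\cdot),\ph\rangle$ converges in law (jointly over finitely many times and test functions) to the corresponding Gaussian. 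Writing $\Zgn$ in mild form \eqref{e:Z-gamma}, the natural object is the martingale $t\mapsto \langle \Zgn(t),\ph\rangle - \int_0^t \langle \Zgn(s),\Dg\ph\rangle\,ds$, or more directly one works with $N_\ga^\ph(t):= \int_0^t\langle d\Mgn(s),\Pg{t-s}\ph\rangle$ and applies the martingale central limit theorem (e.g.\ \cite[Thm.~VIII.3.11]{JacodShiryaev} or the version used in \cite{MourratWeber}). The two hypotheses to verify are: (i) the predictable quadratic variation of the relevant martingales converges to the deterministic quantity dictated by $Z$, and (ii) a Lindeberg/jump-size condition, i.e.\ the jumps of $\Mgn$ are uniformly small — this last point is immediate since each spin flip changes $\hg$ by $O(\kg(k-j))=O(\ga^2\eps^{-0})$, so after rescaling by $\dg^{-1}=\ga^{-1}$ the jumps of $\Mgn$ are $O(\ga)$, hence vanish.

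The heart of the matter is (i). By \eqref{Gauss2}–\eqref{e:Gauss3}, the quadratic covariation of $\Mgn$ is
\begin{equ}
\langle \Mgn(\cdot,x),\Mgn(\cdot,y)\rangle_t = \co^2\int_0^t \sum_{z\in\Le}\eg^2\,\Kg(x-z)\Kg(y-z)\big(A(\sigma(\alpha^{-1}s,\eps^{-1}z))+E_\ga''\big)\,ds \;,
\end{equ}
with $|E_\ga''|\le C\ga^{1-3\al}$ deterministically and $\co\to1$. The error term and the $\co^2\to1$ factor are harmless; so is the fact that $\sum_z\eg^2\Kg(x-z)\Kg(y-z)$ approximates $\delta(x-y)$ (times the $L^2$-mass of $K$, which tends to that of a delta, i.e.\ the factor is asymptotically $\mathbf1_{x=y}$ in the Fourier pairing sense used in \cite{MourratWeber}). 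What remains is to show that $\int_0^t A(\sigma(\alpha^{-1}s,\eps^{-1}z))\,ds$ may be replaced by $\int_0^t \frac{2}{\beta_c}\,ds = \frac{2t}{\beta_c}$, i.e.\ that $\int_0^t \bar A(\sigma(\alpha^{-1}s,\eps^{-1}z))\,ds \to 0$ in $L^2$ (uniformly in $z$, or after the relevant spatial sum). This is precisely the averaging statement. I would bound it by first replacing $\sigma$ by the auxiliary process $\tilde\sigma$: by the coupling construction, for each site the probability of disagreement after any given jump is $O(\ga^{1-3\nu})$, and the Poisson clock rings $O(\alpha^{-1}t)=O(\ga^{-2}t)$ (resp.\ $O(\ga^{-4}t)$) times on the microscopic time interval, so a union bound plus the deterministic bound $|\bar A|\le C$ controls the $L^1$ (hence, via boundedness, $L^2$) cost of the swap by $O(\ga^{-2}\cdot\ga^{1-3\nu})$ — which forces $\nu$ to be taken small enough (and is the reason for the $L^p$ rather than $L^\infty$ arguments flagged in the introduction; note $\al=\nu$ in this section's notation). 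Having passed to $\tilde\sigma$, Lemma~\ref{lem:sigtildeDecay} gives $\E\,\bar A(\tilde\sigma(r,k))\bar A(\tilde\sigma(r',k))\le Ce^{-|r-r'|}$, so
\begin{equ}
\E\Big(\int_0^t \bar A(\tilde\sigma(\alpha^{-1}s,\eps^{-1}z))\,ds\Big)^2 \le C\int_0^t\!\!\int_0^t e^{-\alpha^{-1}|s-s'|}\,ds\,ds' \le C\alpha t = C\ga^{\kappa}t \to 0 \;,
\end{equ}
with $\kappa=2$ (resp.\ $4$); here the time rescaling $r=\alpha^{-1}s$ is exactly what produces the extra power of $\alpha$ that kills the integral.

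Assembling these pieces: the bracket $\langle N_\ga^\ph\rangle_t$ converges in probability to $\frac{2}{\beta_c}\int_0^t \|\Pg{t-s}\ph\|_{L^2}^2\,ds$, which is $\frac{2}{\beta_c}\int_0^t\|P_{t-s}\ph\|_{L^2}^2\,ds$ in the limit (using $\Dg\to\Delta$, $\co\to1$, and the fact that the Fourier multipliers of $\Pg{t}$ converge to those of $P_t$, as in \cite{MourratWeber}); the jump condition holds; so the martingale CLT yields convergence of the finite-dimensional distributions of $\langle\Zgn,\ph\rangle$ to those of $\langle Z,\ph\rangle$, and joint convergence over finitely many $\ph$ follows by Cramér–Wold applied to linear combinations (which are again of the same form). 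Together with tightness in $\Dd(\R_+,\Ca)$ this gives convergence in law in that space. I expect the main obstacle to be step~(i) — specifically making the $\sigma\to\tilde\sigma$ replacement quantitative while respecting the constraint that the disagreement probability $O(\ga^{1-3\nu})$ must beat the number of clock rings $O(\ga^{-2})$ or $O(\ga^{-4})$; this is delicate because in the second regime it requires $\nu<1/12$, and one must be careful that the error is only controlled in an $L^p$ sense, which is why the remainder-equation analysis in Section~4 had to switch from $L^\infty$ to $L^p$. The stopping-time truncation at $\taun$ (built into $\sgn$) is what makes the constants in Lemma~\ref{Lemma3.1} finite, and one checks at the end that $\taun\to\infty$ in probability so that $\Zgn$ and $\Zg$ agree in the limit.
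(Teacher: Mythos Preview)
Your overall architecture --- tightness from Proposition~\ref{prop:tight}, then identification of the limit via a martingale-problem argument, with the crux being the averaging of the quadratic variation through the coupling to $\tilde\sigma$ --- is the same as the paper's. The paper works directly with the martingale $(\Zgn(t),\phi)-\int_0^t(\Zgn(s),\Dg\phi)\,ds$ rather than your $N_\gamma^\phi$; the former is slightly more convenient because its predictable bracket limits to $\tfrac{2t}{\beta_c}\|\phi\|_{L^2}^2$, but this is a minor point.

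There is, however, a genuine gap in your $\sigma\to\tilde\sigma$ replacement. You argue via a union bound over the $O(\alpha^{-1})$ clock rings, arriving at an error of order $\gamma^{-2}\cdot\gamma^{1-3\nu}=\gamma^{-1-3\nu}$ in the first regime (and $\gamma^{-3-3\nu}$ in the second), and then say this ``forces $\nu$ small enough''. But $\gamma^{-1-3\nu}\to\infty$ for \emph{every} $\nu\ge 0$; no choice of $\nu$ rescues this bound. The union bound discards the essential feature of the coupling: at \emph{each} jump the two spins are re-coupled, so disagreement does not accumulate. Concretely, at any fixed macroscopic time $s$ after the first jump at the site in question, the event $\{\sigma(\alpha^{-1}s,\eps^{-1}z)\neq\tilde\sigma(\alpha^{-1}s,\eps^{-1}z)\}$ is decided entirely by the coin toss at the \emph{most recent} jump, so its probability is at most $1-q\le C\gamma^{1-3\nu}$. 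Hence
\[
\E\int_0^t\big|A(\sigma)-A(\tilde\sigma)\big|\,ds\;\le\;\sup|A|\int_0^t\Big(\P(T_o>s)+C\gamma^{1-3\nu}\Big)\,ds\;\le\;C\big(\alpha+t\,\gamma^{1-3\nu}\big),
\]
which is precisely the paper's estimate \eqref{e:Gauss17}; this vanishes in both regimes for any $\nu<\tfrac13$. Your remark that this step is the source of the $L^p$-versus-$L^\infty$ switch in Section~4 is also off target: that switch is caused by the error term $\tilde E$ in Proposition~\ref{prop:averageA} (where the heat-kernel weight makes the analogous bound available only in $L^p$), not by the present theorem, whose error bounds are pointwise in $z$ and purely first/second moment.
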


\begin{proof}
Proposition~\ref{prop:tight} below for the case $n=1$ shows that 
the family $\{ Z_{\ga,\nn}, \ga \in (0,\frac13) \}$ is tight on $\Dd(\R_+,\Ca) $ and any weak limit is supported on $\Cc(\R_+,\Ca) $. 
Given this tightness result,
we aim to show that any weak accumulation point $\bar{Z}$ solves the martingale problem discussed in Theorem 6.1 and Appendix C  of \cite{MourratWeber}.
The argument for the ``drift'' part of the martingale problem, namely establishing that 
$$
\MM_{\bar{Z},\phi}(t) \eqdef 
(\bar{Z}(t),\phi) - \int_0^t (\bar{Z}(s),\Delta \phi) \, ds\;
$$
is a local martingale for any test function $\phi \in \Cc^\infty$ is identical to \cite{MourratWeber}. Indeed, the claim we need to establish is that there exists a sequence 
of stopping times $T_n$ with $T_n \uparrow \infty$ a.s. as $n \to \infty$ such that for all $s<t$ and all  random variables $F$ which are bounded and measurable with respect to the 
 $\sigma$-algebra over $\Dd([0,s],\Cc^{-\al})$ we have
 \begin{equ}\label{e1}
\E \Big( \big(  \MM_{\bar{Z},\phi}(t \wedge T_n) -  \MM_{\bar{Z},\phi}(s \wedge T_n) \big) F \Big) = 0.
 \end{equ}
For any $\Cc^\infty$ function $\phi$
\begin{equ}\label{e2}
\Mm_{\ga,\phi}(t) = (\Zgn(t) , \phi)  -  \int_0^t  (\Zgn(s), \Dg \phi)  \, ds\;,
\end{equ}
is a martingale by assumption and therefore the formula \eqref{e1} with $\MM_{\bar{Z},\phi}$ replaced by $\Mm_{\ga,\phi}$ holds irrespective 
of the choice of stopping time $T_n$.
 Just as in \cite[Eq. (6.6)]{MourratWeber} it follows that the approximate Laplacian $\Dg$ appearing in expression  \eqref{e2}
 can be replaced by the full Laplacian $\Delta$ up to an error which is controlled by $C(\phi) \gamma^{2-2\ka}$ in both the ``first regime" and the 
 ``second regime''. By assumption the processes $\Zgn$ converge in law to $\bar{Z}$ and as the law of $\bar{Z}$ only charges the 
 space $\Cc(\R_+, \Cc^\nu)$,  in particular it assigns measure one to the set of continuity points (with respect to $\Dd(\R_+, \Cc^\nu)$
 topology) of the map that 
 sends $\bar{Z} $ to 
$\MM_{\bar{Z},\phi}(t)$
(recall that $\phi$ is smooth). Thus 
we can pass to the limit as soon as we have some control over the uniform integrability of these random variables. This is precisely the role of the 
stopping times - if we set $T_{L,\ga} =\inf\{t \ge 0 : \|\Zgn(t)\|_{\Ca} > L\} $ then it follows just as in \cite[Proof of Theorem~6.1]{MourratWeber} that (outside 
of a hypothetical countable set of values $L$) the processes $\Zgn(s\wedge T_{L,\gamma})$ 
also converge in law and furthermore for fixed $L,s,t$ the
random variables 
\begin{equ}\label{e3}
 (\Zgn(t \wedge T_{L,\gamma}) , \phi)  -  \int_0^{t\wedge T_{L,\ga}}  (\Zgn(s\wedge T_{L,\ga}), \Dg \phi)  \, ds\;,
\end{equ}
are uniformly bounded as $\gamma \to 0$ which permits to pass to the limit and establishes \eqref{e1}.

The more interesting part concerns the quadratic variation. More precisely, we need to show that 
$$
 \Ll(\MM_{\bar{Z},\phi}(t)\Rr)^2 - \frac{2t}{\beta_c} \|\phi\|_{L^2}^2\;
$$
is a local martingale; recall that the factor $2/\beta_c$ naturally appears from \eqref{e:def-c-theta}.

This follows if we can establish that for any fixed trigonometric polynomial  $\phi$. If we fix such a $\phi$, then as soon as $\gamma$ is small enough to guarantee the degree of $\phi$ is $ \le \gamma^{-2}$ (or $\ga^{-3}$ depending on the regime),  the quantity
\[
 (\Mgn(t), \phi)  =\sum_{x \in \Le}\eg^2 \Mgn(t,x) \phi(x)
\]
can be written using Parseval's identity (see    \cite[Appendix A]{MourratWeber})
\begin{align*}
\langle (\Mgn(t), \phi) \rangle 
&= \co^2 \sum_{x ,y\in \Le} \eps^4 \phi(x) \phi(y)\sum_{z \in \Le}\eg^2  \Kg( x -z) \, \Kg(y-z)  \\
& \quad  \times \int_0^t  \!\! \sum_{\bar{\sigma} \in \{ \pm 1,0\} } 
	(\bar{\sigma} - \sigma(\alpha^{-1} s, \eps^{-1}z))^2 \Cgn\big(s,  z, \bar{\sigma}) \, ds\\
&=    \frac{2t}{\beta_c} \|\phi\|_{L^2}^2 + E_\gamma'''(t),
\end{align*} 
for an error $E_\gamma'''(t)$ for which $\E |E_\gamma^{'''}(t)|  \to 0$ as $\gamma \to 0$. 
For this statement in turn \eqref{e:Gauss3} and \eqref{e:Gauss4} show that 
it is sufficient to prove that for every $z \in \Le$ we have
\begin{equ} \label{e:Epppp}
\int_0^t A(\sigma(\alpha^{-1} s, \eps^{-1}z  )) ds =  \frac{2t}{\beta_c}  + E_\gamma^{''''},
\end{equ}
with a good control on $E_\gamma^{''''}$. 
Indeed, one has $|\co^2 -1| \le O(\gamma^2)$ and by \eqref{e:norm-kk}, \eqref{e:samplek} and  $\Kg(x) = \eg^{-2} \kg(\eg^{-1}x) $,
\begin{equ}
\sum_{x ,y\in \Le} \eps^4 \phi(x) \phi(y)\sum_{z \in \Le}\eg^2  \Kg( x -z) \, \Kg(y-z)
\to  \|\phi\|_{L^2}^2 \;,
\end{equ}
independently of the scaling relation between $\eps$ and $\ga$ (thus it holds for both scaling regimes).
Although we have assumed that $\phi$ is a trigonometric polynomial, by \cite[Remark~C.4]{MourratWeber}, this is sufficient to characterize the law of $\ov{Z}$.

While the error terms $E_\gamma, E_\gamma', E_\gamma''$ were all deterministically bounded, we will only get a probabilistic bound 
for $E_\gamma'''$. To obtain this bound we will 
need the coupling between the microscopic spin processes $\sigma$ and $\tilde{\sigma}$.
Recall that for every $z$, after each jump the probability 
that $\tilde{\sigma}(\alpha^{-1}s,\eps^{-1} z) \neq \sigma(\alpha^{-1}s,\eps^{-1} z) $ is bounded by $C \gamma^{1-3 \nu}$,
where the constant $C$ does not depend on  $z$  and the jump-time.
We then get
\begin{align*}
\int_0^t A(\sigma(\alpha^{-1}s,\eps^{-1} z)) ds -  \frac{2t}{\beta_c}  =
& \int_0^t A(\tilde{\sigma} (\alpha^{-1}s,\eps^{-1} z)) ds -\frac{2t}{\beta_c}  \\
&+ \int_0^t A(\sigma (\alpha^{-1}s,\eps^{-1} z)) ds -  A(\tilde{\sigma} (\alpha^{-1}s,\eps^{-1} z)) \, ds.
\end{align*}
For the term in the second line we get 
\begin{equs} [e:Gauss17]
\E &\Big| \int_0^t A(\sigma (\alpha^{-1}s,\eps^{-1} z))  
	-  A(\tilde{\sigma} (\alpha^{-1}s,\eps^{-1} z))\, ds \Big|  \\	
&\leq \sup_{\bar{\sigma} \in \{\pm 1 , 0\}}|A(\bar{\sigma})| 
	\int_0^t \P \Big(\sigma(\alpha^{-1}s, \eps^{-1} z) \neq \tilde\sigma(\alpha^{-1}s, \eps^{-1} z) \Big) ds  \\	
&\leq \sup_{\bar{\sigma} \in \{\pm 1 , 0\}}|A(\bar{\sigma})| \int_0^t \Big( \P(T_o> s) +  C \gamma^{1 - 3 \nu}  \Big) ds\\
&\leq \sup_{\bar{\sigma} \in \{\pm 1 , 0\}}|A(\bar{\sigma})| \int_0^t \Big( e^{-\frac{s}{\alpha}} +  C \gamma^{1 - 3 \nu}  \Big) ds\\
&\leq \sup_{\bar{\sigma} \in \{\pm 1 , 0\}}|A(\bar{\sigma})|  \big( \alpha + Ct \gamma^{1 - 3 \nu}  \big) \;.
\end{equs}
Here  $T_o$ is the holding time before the first jump.

For the other term, by Lemma~\ref{lem:sigtildeDecay}, its second moment can be bounded as
\begin{equs}
\E &  \Big( \int_0^t A(\tilde{\sigma} (\alpha^{-1}s,\eps^{-1} z)) ds -\frac{2t}{\beta_c} \Big)^2\\
& \le \int_0^t \int_0^t \E \bar A (\tilde\sigma (\alpha^{-1}s,\eps^{-1}z))\bar A (\tilde\sigma (\alpha^{-1}s',\eps^{-1}z))\,ds\,ds' \\
& \le C\int_0^t \int_0^t e^{-\frac{|s-s'|}{\alpha}} \,ds\,ds' \le C\alpha \;.
\end{equs}
So this term goes to zero as well. Therefore we have shown that the error term in \eqref{e:Epppp} goes to zero and thus the theorem is proved.
\end{proof}

The following result will also be applied several times in the sequel. 
\begin{proposition}\label{prop:averageA}
For every $0\le s \le t\leq T$ and $x \in \Le$, one has
\begin{equs} [e:averageA]
 \int_0^s  & \sum_{z \in \Le} \eg^2 \,  \big( \Pg{t-r} \ae \Kg \big)^2(z-x)
 	 \, \sum_{\bar{\sigma} \in \{ \pm 1,0\} } (\bar{\sigma} - \sigma(\alpha^{-1}r,\eps^{-1} z))^2 
	 \Cgn\big(r,  z, \bar{\sigma}) \, dr \\
& =  \frac{2}{\beta_c} \int_0^s \sum_{z \in \Le} \eg^2 \,  \big( \Pg{t-r} \ae \Kg \big)^2(z-x)\, dr
+ \tilde E_t(s,x)
\end{equs}
where the process $\tilde E$ satisfies the bound 
\begin{equ}[e:Error-stochastic-1]
\E    |\tilde E_t(s,x)|^p  \le C\gamma^{1-3\nu}\log (\gamma^{-1})^{p+1}
\end{equ}
 for every $p\ge 2$ and
 some constant $C=C(T,\nu,\nn)$ depending linearly on $\nn$. Its extension $   \Ex \tilde E_t(s, \cdot)$, which will still be denoted by $\tilde E_t(s,\cdot)$, satisfies
 \begin{equ}[e:Error-stochastic-2]
 \E    \|\Ex \tilde E_t(s, \cdot)\|_{L^p(\T^2)}^p  \le C\gamma^{1-4\nu}\log (\gamma^{-1})^{2p} 
 \end{equ} 
for every $p\ge 2$ and
 some constant $C=C(T,\nu,\nn)$ depending linearly on $\nn$.
\end{proposition}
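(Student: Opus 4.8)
The plan is to carry out the argument from the proof of Theorem~\ref{t:converg-lin} — the manipulations around \eqref{e:Gauss3}--\eqref{e:Epppp} and \eqref{e:Gauss17} — in a quantitative, weighted, $L^p$-valued version. Throughout, fix $s\le t\le T$, $x\in\Le$, write $\sigma_r^z\eqdef\sigma(\alpha^{-1}r,\eps^{-1}z)$ and $\tilde\sigma_r^z\eqdef\tilde\sigma(\alpha^{-1}r,\eps^{-1}z)$, and abbreviate the weight $w_r(z)\eqdef\eg^2\big(\Pg{t-r}\ae\Kg\big)^2(z-x)$, so that $\sum_{z\in\Le}w_r(z)=\|\Pg{t-r}\ae\Kg\|_{L^2(\Le)}^2$. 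First I would record the a priori bounds on this weight: since $\Pg u$ is a Markov semigroup (hence an $L^2$ contraction) and $\Kg$ is, in either regime, a unit-mass approximate identity of physical width $\sim\gamma$ (resp.\ $\sim\gamma^2$), one has $\sum_z w_r(z)\le C\min\!\big((t-r)^{-1},\|\Kg\|_{L^2}^2\big)$ and $\|\Kg\|_{L^2}^2\le C\eg^{-2}\gamma^2$, whence $\int_0^s\sum_z w_r(z)\,dr\le C\log\gamma^{-1}$; bounding likewise $\|\Pg u\ae\Kg\|_{L^4}^4\le\|\Pg u\ae\Kg\|_{L^\infty}^2\|\Pg u\ae\Kg\|_{L^2}^2$ gives $\alpha\int_0^s\sum_z w_r(z)^2\,dr\le C\gamma^2$ in both regimes. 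By Lemma~\ref{Lemma3.1} together with \eqref{e:Gauss3}--\eqref{e:Gauss4}, the bracketed integrand in \eqref{e:averageA} equals $A(\sigma_r^z)+E_\gamma''$ with $|E_\gamma''|\le C\gamma^{1-3\nu}$ \emph{deterministically}, so its contribution to $\tilde E_t(s,x)$ is deterministically at most $C\gamma^{1-3\nu}\log\gamma^{-1}$; it therefore suffices to bound the $p$-th moment of $F(s,x)\eqdef\int_0^s\sum_{z\in\Le}w_r(z)\,\bar A(\sigma_r^z)\,dr$, where $\bar A\eqdef A-2/\beta_c$.

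Next I would bring in the coupling with the i.i.d.-update process $\tilde\sigma$ and split $\bar A(\sigma_r^z)=\bar A(\tilde\sigma_r^z)+\big(A(\sigma_r^z)-A(\tilde\sigma_r^z)\big)$, so $F=F_1+F_2$. For $F_2$ one has $|F_2|\le 2\sup_{\bar\sigma}|A(\bar\sigma)|\int_0^s\sum_z w_r(z)\,\mathbf 1\{\sigma_r^z\neq\tilde\sigma_r^z\}\,dr$; raising to the $p$-th power, applying Jensen's inequality for the finite measure $w_r(z)\,dr$ (total mass $\le C\log\gamma^{-1}$), taking expectations, and inserting the coupling bound $\P(\sigma_r^z\neq\tilde\sigma_r^z)\le e^{-r/\alpha}+C\gamma^{1-3\nu}$ yields
\[
\E|F_2|^p\le C(\log\gamma^{-1})^{p-1}\Big(\int_0^s\sum_z w_r(z)\,e^{-r/\alpha}\,dr+\gamma^{1-3\nu}\log\gamma^{-1}\Big);
\]
the remaining integral is estimated exactly as the term $\int_0^t(e^{-s/\alpha}+C\gamma^{1-3\nu})\,ds\le\alpha+Ct\gamma^{1-3\nu}$ in \eqref{e:Gauss17}, using the a priori weight bounds, giving $\E|F_2|^p\le C\gamma^{1-3\nu}(\log\gamma^{-1})^{p+1}$.

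The core is $F_1$, which is centred ($\E\bar A(\tilde\sigma(r,k))=0$ for every $r$, by \eqref{e:def-c-theta}), and which I would control through the exponential decay of correlations of Lemma~\ref{lem:sigtildeDecay}. For $p=2$ this is immediate: expanding the square, the covariance bound $\E[\bar A(\tilde\sigma_r^z)\bar A(\tilde\sigma_{r'}^{z'})]\le C\mathbf 1_{z=z'}e^{-|r-r'|/\alpha}$ collapses the double $z$-sum to the diagonal and the $e^{-|r-r'|/\alpha}$ factor integrates in $r'$ to $\le2\alpha$, so $\E F_1^2\le C\alpha\int_0^s\sum_z w_r(z)^2\,dr\le C\gamma^2\ll\gamma^{1-3\nu}$. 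For general $p\ge2$ I would upgrade this using that $F_1=\sum_{z\in\Le}\int_0^s w_r(z)\bar A(\tilde\sigma_r^z)\,dr$ is a sum over the sites $k=\eps^{-1}z\in\LN$ of \emph{independent}, bounded, centred random variables: a Rosenthal/Burkholder-type inequality in the site index reduces $\E|F_1|^p$ to a constant times $\big(\sum_z\E(\int_0^s w_r(z)\bar A(\tilde\sigma_r^z)\,dr)^2\big)^{p/2}+\sum_z\E|\int_0^s w_r(z)\bar A(\tilde\sigma_r^z)\,dr|^p$, and for each fixed $z$ the single-site integral is a bounded functional of an exponentially mixing pure-jump process (mixing rate $1/\alpha$ in macroscopic time), which obeys a CLT-type moment bound $\E|\int_0^s w_r(z)\bar A(\tilde\sigma_r^z)\,dr|^p\le C_p(\log\gamma^{-1})^{Cp}\big(\alpha\!\int_0^s\! w_r(z)^2\,dr\big)^{p/2}$ plus lower-order terms, obtainable by a block decomposition on the time scale $\alpha$ (logarithmic spacing making the cross-correlations sum to $O(\gamma)$). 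Assembling these with the a priori weight bounds produces $\E|F_1|^p\le C_p(\log\gamma^{-1})^{Cp}\big(\alpha\int_0^s\sum_z w_r(z)^2\,dr\big)^{p/2}\le C_p\gamma^{p}(\log\gamma^{-1})^{Cp}\le\gamma^{1-3\nu}(\log\gamma^{-1})^{p+1}$ for $\gamma$ small. Combining $F_1$, $F_2$ and the deterministic $E_\gamma''$ contribution gives \eqref{e:Error-stochastic-1}.

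Finally, for the extension estimate \eqref{e:Error-stochastic-2} I would use the Plancherel--P\'olya / Marcinkiewicz--Zygmund comparison between the $L^p(\T^2)$ and $\ell^p(\Le)$ norms for trigonometric polynomials of degree $\le N$ (as in \cite[Appendix~A]{MourratWeber}): this gives $\|\Ex\tilde E_t(s,\cdot)\|_{L^p(\T^2)}^p\le C\gamma^{-p\nu}\sum_{y\in\Le}\eg^2|\tilde E_t(s,y)|^p$, the factor $\gamma^{-p\nu}$ absorbing the mild loss coming from the large degree $N\sim\gamma^{-2}$ (resp.\ $\sim\gamma^{-3}$); taking expectations and inserting the pointwise bound \eqref{e:Error-stochastic-1}, with $\sum_y\eg^2=|\T^2|$, yields \eqref{e:Error-stochastic-2}. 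The step I expect to be the main obstacle is the passage from the $L^2$ to the $L^p$ bound for $F_1$: a union bound over the $\sim\gamma^{-4}$ (resp.\ $\sim\gamma^{-6}$) sites would cost a polynomial power of $\gamma$, so one must genuinely combine the independence across sites with the exponential temporal mixing to keep every loss logarithmic; a secondary point, already present in \eqref{e:Gauss17}, is the treatment of the short-time layer $r\ls\alpha$, where the microscopic dynamics has not yet equilibrated.
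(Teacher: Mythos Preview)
Your overall decomposition and the treatment of the deterministic piece $E_\gamma''$ and of $F_2$ match the paper's proof essentially line by line, and the extension argument is equivalent (the paper uses the explicit Dirichlet kernel $\mathsf{Ker}$ and Jensen, you use a Marcinkiewicz--Zygmund comparison; both yield only logarithmic losses).

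The one point where you diverge is the upgrade from $L^2$ to $L^p$ for $F_1$, which you flag as ``the main obstacle'' and propose to attack with a Rosenthal inequality in the site index combined with a temporal block decomposition. This would work, but it is much more than is needed. The paper's trick is far simpler: since $|\bar A|\le C$ and $\int_0^s\sum_z w_r(z)\,dr\le C\log\gamma^{-1}$, one has the \emph{deterministic} bound $|F_1|\le C\log\gamma^{-1}$, and hence
\[
\E|F_1|^p \le \big(C\log\gamma^{-1}\big)^{p-2}\,\E F_1^2 \le C\gamma^2\big(\log\gamma^{-1}\big)^{p+1},
\]
which is already stronger than required. You in fact used the same idea (via Jensen) for $F_2$; it applies verbatim to $F_1$, and no appeal to independence across sites or mixing-based higher-moment bounds is needed. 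The ``union bound over $\gamma^{-4}$ sites'' concern is a red herring: the statement is pointwise in $x$, and the spatial $L^p$ bound is obtained afterwards by simply summing the pointwise moment bound over $\Le$.
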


\begin{proof}
We first show  that the sum over $\bar\sigma$  can be replaced by $A(\sigma(r,\eps^{-1}z))$ (recall the definition of $A$ in \eqref{e:Gauss4})
up to  an error which is controlled deterministically.
Turning to Fourier space, using \eqref{e:Fourier-semi} and 
Parseval's identity and the elementary bound 
$\int_0^s e^{-(s-r)a}dr \le C(\frac{1}{s}+a)^{-1}$ for any $a>0$, we obtain
\begin{equ}  [e:PKsq]
 \int_0^s   \sum_{z \in \Le} \eg^2 \,  \big( \Pg{t-r} \ae \Kg \big)^2(z)\, dr 
 \le C\!\!\!\!\!\!
 \sum_{\om \in \{ -N, \ldots, N \}^2} \frac{|\hKg(\om)|^2}{t^{-1} + 2 \ga^{-b} (1-\hKg(\om))}
\end{equ}
where $b=2$ in the first regime and $b=4$  in the second regime. We then use the estimates \eqref{e:K2.4} and the first estimate in \eqref{e:K1} to bound the sum over $|\om| \leq C\ga^{-1}$ (resp. $C\ga^{-2}$) and the estimate \eqref{e:K3B} to bound the sum over $|\om| \geq C\ga^{-1}$ (resp. $C\ga^{-2}$)  in the first (resp. second) regime, which permits to conclude that the right hand side of \eqref{e:PKsq} is bounded by $C \log(\ga^{-1})$.
Therefore, invoking \eqref{e:Gauss3}, the   left hand side of \eqref{e:averageA} is equal to
\begin{equ} 
 \int_0^s   \sum_{z \in \Le} \eg^2 \,  \big( \Pg{t-r} \ae \Kg \big)^2(z-x)
 	\,A(\sigma(\alpha^{-1} r,\eps^{-1}z))\, dr 
\end{equ}
plus an error which is deterministically bounded by $C\gamma^{1-3\nu}\log \gamma^{-1}$.

We proceed as in the proof of Theorem~\ref{t:converg-lin}, again making use of the process $\tilde{\sigma}$ constructed at the beginning of this section. Arguing as in \eqref{e:Gauss17}
we can replace $A(\sigma(\alpha^{-1} r,\eps^{-1}z))$ in the above integral by $A(\tilde\sigma(\alpha^{-1} r,\eps^{-1}z))$ with an error
satisfying the following {\it first} moment bound
\begin{equs} [e:1stmomA-A]
\E   \int_0^s   \sum_{z \in \Le} \eg^2 \,  \big( \Pg{t-r} \ae \Kg \big)^2(z-x)
 	\, \Big| A(\sigma(\alpha^{-1} r,\eps^{-1}z))  - A(\tilde\sigma(\alpha^{-1} r,\eps^{-1}z)) \Big|   \, dr \\ 
 \leq \sup_{\bar{\sigma} \in \{\pm 1 , 0\}}|A(\bar{\sigma})|
 \int_0^s   \sum_{z \in \Le} \eg^2 \,  \big( \Pg{t-r} \ae \Kg \big)^2(z-x)
 	\,  \big( e^{-\frac{r}{\alpha}} +  C \gamma^{1 - 3 \nu}  \big)  \, dr \;.
\end{equs}
We claim that by a similar argument to the one leading to  \eqref{e:PKsq}, the right hand side of \eqref{e:1stmomA-A} can be bounded by $C\gamma^{1-3\nu}\log \gamma^{-1}$.
 Indeed, for  the term involving $C \gamma^{1 - 3 \nu} $ this is immediately clear from the above $\log(\ga^{-1})$ bound on  \eqref{e:PKsq}. For  the term with $e^{-\frac{r}{\alpha}}$  
we divide the $r$-integral into an integral over $r\in [\ga,s]$ and an integral over $r\in [0,\ga]$.
For the integral over $r\in [\ga,s]$,
we simply bound $e^{-\frac{r}{\alpha}}\le C\ga$ (recall that $\alpha \approx \gamma^2$ in the first and $\alpha \approx \gamma^4$ in the second scaling regime), and the integration of the other factors is bounded by $C\log(\ga^{-1})$ as above.
For the integral over $r\in [0,\ga]$, we bound $e^{-\frac{r}{\alpha}}\le 1$, 
and then since after applying Parseval's identity the only $r$-dependent factor
inside the $r$-integral is
$e^{-2(t-r) \gamma^{-b} (1-\hKg(\om) )}$ and as this function is monotonically increasing in $r$,  we have
\[
\int_0^\ga  e^{-(t-r) \gamma^{-b} (1-\hKg(\om) ) } dr
\le \frac{\ga}{s} \int_0^s  e^{-(t-r) \gamma^{-b} (1-\hKg(\om) ) }dr \;;
\]
applying the above $\log(\ga^{-1})$ bound again we conclude that as claimed the right hand side of \eqref{e:1stmomA-A} is bounded by $C\gamma^{1-3\nu}\log \gamma^{-1}$.

Finally using the deterministic bound
\begin{equs}
\int_0^s   \sum_{z \in \Le}&   \eg^2 \,  \big( \Pg{t-r} \ae \Kg \big)^2(z-x)
 	\, \big| A(\sigma(\alpha^{-1} r,\eps^{-1}z))  - A(\tilde\sigma( \alpha^{-1} r,\eps^{-1}z)) \big|   \, dr \\
	& \leq C \log \gamma^{-1},
\end{equs}
the above bound on the {\it first} moment can be upgraded to a bound on {\it all} stochastic moments. We get for any $p \geq 1$ that 
\begin{align}
\notag
&\E   \Big(\int_0^s   \sum_{z \in \Le} \eg^2 \,  \big( \Pg{t-r} \ae \Kg \big)^2(z-x)
 	\, \Big| A(\sigma( \alpha^{-1} r,\eps^{-1}z))  - A(\tilde\sigma(\alpha^{-1} r,\eps^{-1}z)) \Big|   \, dr\Big)^p \\
&	\le  C \gamma^{1- 3 \nu} ( \log \gamma^{-1})^p.\label{Gaussian88}
\end{align}

To prove \eqref{e:averageA}
it remains to control moments of the  error term
\begin{equ}
 \int_0^s \sum_{z \in \Le} \eg^2 \,  \big( \Pg{t-r} \ae \Kg \big)^2(z-x)
 \Big(A(\tilde\sigma(\alpha^{-1} r,\eps^{-1}z))-  \frac{2}{\beta_c} \Big)\, dr \;.
\end{equ}
As before we use the centered random field $\bar A(\tilde\sigma(\alpha^{-1}r,\eps^{-1}z)) =A(\tilde\sigma(\alpha^{-1} r,\eps^{-1}z))-  \frac{2}{\beta_c}  $ and write
\begin{align*}
&\E  \Big(  \int_0^s \sum_{z \in \Le} \eg^2 \,  \big( \Pg{t-r} \ae \Kg \big)^2(z-x)
 \bar A(\tilde\sigma(\alpha^{-1} r,\eps^{-1}z)) \, dr \Big)^2\\
 &=    \int_0^s \int_0^s \sum_{z \in \Le}    \sum_{z' \in \Le} \eg^4    \,  \big( \Pg{t-r} \ae \Kg \big)^2(z-x) \big( \Pg{t-r'} \ae \Kg \big)^2(z'-x)\\
& \qquad \qquad \times \E   \bar A(\tilde\sigma(\alpha^{-1} r,\eps^{-1}z))\,  \bar A(\tilde\sigma(\alpha^{-1} r',\eps^{-1}z'))  \,dr  dr'.
\end{align*}
Applying Lemma~\ref{lem:sigtildeDecay}, this turns into
\begin{align*}
&\E  \Big(  \int_0^s \sum_{z \in \Le} \eg^2 \,  \big( \Pg{t-r} \ae \Kg \big)^2(z-x)
 \bar A(\tilde\sigma(\alpha^{-1} r,\eps^{-1}z))\, dr \Big)^2\\
 &\leq C \eg^2  \int_0^s \int_0^s \sum_{z \in \Le}    \eg^2    \,  \big( \Pg{t-r} \ae \Kg \big)^2(z-x) \big( \Pg{t-r'} \ae \Kg \big)^2(z-x) \,e^{-\frac{|r-r'|}{\ag}}\,  dr  dr'\\
 &\leq C \eg^2 
 	\sup_{r'\in[0,s]}\|  \Pg{t-r'} \ae \Kg  \|_{L^\infty(\Le)}^2 \\
		& \qquad\qquad \times \int_0^s  \sum_{z \in \Le}    \eg^2    \,  \big( \Pg{t-r} \ae \Kg \big)^2(z-x)  \Big(\int_0^s \,e^{-\frac{|r-r'|}{\ag}}\,  dr' \Big) dr\\
 &\leq C \eg^2 \left( \gamma^{-b} \log(\ga^{-1})\right)^2 \alpha \int_0^s  \sum_{z \in \Le}    \eg^2    \,  \big( \Pg{t-r} \ae \Kg \big)^2(z-x)   dr\\
 & \leq C  \eg^2 \gamma^{-2b} (\log(\ga^{-1}))^3 \alpha,
\end{align*}
where in the third inequality we have used \eqref{e:P0} and $b = 2 $ in the first regime and $b= 4$ in the second regime. In both the first regime \eqref{e:Phi4-scaling} and  the second regime  \eqref{e:Phi6-scaling} this expression is bounded by 
$\leq C\gamma^2 (\log(\ga^{-1}))^3$.
As before we can upgrade this stochastic $L^2$ to a stochastic $L^p$ bound 
by using a deterministic bound 
\begin{equ}
\int_0^s \sum_{z \in \Le} \eg^2 \,  \big( \Pg{t-r} \ae \Kg \big)^2(z-x)
 \bar A(\tilde\sigma(\alpha^{-1} r,\eps^{-1}z)) \, dr \leq C \log \gamma^{-1}.
\end{equ}
Therefore in both scaling regimes
 \eqref{e:Error-stochastic-1} follows.

To obtain the second bound (Eq.~ \eqref{e:Error-stochastic-2})
we sum \eqref{e:Error-stochastic-1} over $x \in \Le$ to obtain 
\begin{equ}
\E    \|\tilde E_t(s,\cdot) \|_{L^p(\Le)}^p  = \sum_{x \in \Le} \eps^2 \E    |\tilde E_t(s,x) |^p \leq   C\gamma^{1-3\nu}\log (\gamma^{-1})^p.
\end{equ}
To replace the $L^p$ norm over $\Le$ by the $L^p$ norm over the continuous torus and $\tilde E$ by its extension write using Jensen's inequality
\begin{equs}[e:AAA1]
&\int_{\T^2} |\Ex \tilde E_t(s,z)|^p dz   \\
&= \int_{\T^2} \Big| \sum_{x \in \Le} \eps^2 \tilde E_t(s,x) \mathsf{Ker}(x-z) \Big|^p dz \\
 &\leq \int_{\T^2} \Big( \sum_{x \in \Le} \eps^2 |\tilde E_t(s,x) |^p |\mathsf{Ker}(x-z)| \Big) \Big( \sum_{x \in \Le} \eps^2 |\mathsf{Ker}(x-z)| \Big)^{p-1} dz
\end{equs}
where (as discussed in  \cite[Lemma~A.6]{MourratWeber}) the extension kernel is given by 
\[
\mathsf{Ker}(x-z) = \prod_{j=1}^2 \frac{\sin \big(\frac{\pi}{2}(2N+1) (x_j - z_j) \big)}{\sin \big(\frac{\pi}{2}(x_j - z_j) \big)}
\]
so that we have  that  $\sum_{x \in \Le} \eps^2 |\mathsf{Ker}(x-z)| \le C\log \gamma^{-1}$ uniformly in $z$. Plugging this estimate into \eqref{e:AAA1}
yields
\begin{align*}
&\int_{\T^2} |\Ex \tilde E_t(s,z)|^p dz   \\
& \leq C(  \log \gamma^{-1})^{p-1}  \Big( \sum_{x \in \Le} \eps^2 |\tilde E_t(s,x) |^p  \int_{\T^2} |\mathsf{Ker}(x-z)|\,dz \Big) \\
& \leq C(  \log \gamma^{-1})^{p} \| \tilde E_t(s, \cdot)\|_{L^p(\Le)}^p
\end{align*}
so  \eqref{e:Error-stochastic-2} follows as well.
%
\end{proof}

\section{Wick powers and proof of the main theorem}
The aim of this section is to prove Theorem~\ref{thm:Main}.
Since we will apply a discrete version of Da Prato-Debussche argument (\cite{dPD}) as in \cite{MourratWeber}, an important step is to
prove the convergence of the approximate Wick powers $\ZG{n}$ to the  Wick powers. 
Fortunately, the work \cite{MourratWeber} treated the Wick powers with general $n$, though only $n\le 3$ was needed therein; here we only need some minor modifications 
to their construction of Wick powers.

%

We start by recalling the definitions of the approximate Wick powers $\ZG{n}$.
Recall that $\Zg$ is defined in \eqref{e:Z-gamma}.
It will be convenient to work with the following family of approximations to $\Zg(t,x)$. For $s \leq t$, we introduce
\begin{equ}
\Rg(s,x) \eqdef \int_{r=0}^s  \Pg{t-r} \, d\Mg (r,x) \;,
\end{equ} 
and  extend $\Rg(s, \cdot) $ and $\Zg( t , \cdot) $ to  functions on all of $\T^2$ by trigonometric polynomials of degree $\leq N$ as \eqref{e:Extension}. Note that for any $t$ and any $x \in \T^2$, the process $\Rg(\cdot,x)$ is a martingale and $\Rg(t,\cdot) = \Zg(t,\cdot)$. 

The iterated integrals are then defined recursively as follows.
For a fixed $t\geq 0$ and $x \in \T^2$, we set $\RG{1}(s,x) = \Rg(s,x)$. For $n \geq 2, \; t \geq 0$  and $x \in \Le$, we set
\begin{equation}\label{e:ZnA}
\RG{n}(s,x) =  n \int_{r=0}^s \RG{n-1}(r^-,x) \; d \Rg(r,x)\;.
\end{equation}
We use the notation $\RG{n-1}(r^-,x)$ to denote the left limit of $\RG{n-1}(\cdot,x)$ at $r$. This definition ensures that $(\RG{n}(s,x))_{0 \le s \le t}$ is a martingale. 
The extension of $\RG{n}(s, \cdot)$ to the entire $\T^2$ is also 
defined  recursively, through its Fourier series
\begin{equation}\label{e:ZnB}
\hRG{n}(s, \om) \eqdef n\int_{r=0}^s \frac{1}{4}\sum_{\tilde{\om} \in \Z^2} \hRG{n-1}(r^-, \om- \tilde{\om}) \; d\hRg(r, \tilde{\om})\;,
\end{equation} 
and set $\RG{n}(s,x) \eqdef \frac14 \sum_{\om \in \Z^2}\hRG{n}(s,\om)e^{i \pi \om \cdot x}$. This definition coincides with \eqref{e:ZnA} on $\Le$, and for every $n \geq 2$ the function $\RG{n}(s, \cdot) \colon \T^2 \to \R$ is a trigonometric polynomial of degree $\leq nN$. For any $n \geq 2$ and for $t \geq 0$, $x \in \T^2$ we define
\begin{equation}\label{e:DefZn}
\ZG{n}(t,x) \eqdef \RG{n}(t,x) \;.
\end{equation}

Finally let $\RGn{n}$ and $\ZGn{n}$ be iterated stochastic integrals  defined just as $\RG{n}$ and $\ZG{n}$  but with $\Mg$ replaced by $\Mgn$. Recall that $\nn$ is the parameter fixed in \eqref{e:deftaug}.

By the definition of $\Rg(s,x)$ and the quadratic variation of $M_\gamma$, one has
\begin{equs} [e:QuadrVarR]
\langle \Rg(\cdot, x) \rangle_s =  \co^2 \int_0^s &\sum_{z \in \Le} \eps^2  \big(\Pg{t-r} \ae \Kg \big)^2 (x-z)  \\
&\times\sum_{\bar{\sigma} \in \{ \pm 1,0\} } (\bar{\sigma} - \sigma(r,\eps^{-1} z))^2 
	 \Cgn\big(r,  z, \bar{\sigma}) \, dr \;.
\end{equs}

There exists a constant $ \ga_0>0$ (arising when we apply the kernel bounds in Section~\ref{sec:kernels}) such that the following results hold.

\begin{proposition}\label{prop:RGBoundOne}
 For every $n \in \N$,  $p\geq 1$,  $\al>0$,  $T>0$,  $0 \leq \la \leq \frac12$ and $0 < \ka \leq 1$,  there exists a constant $C= C(n,p,\al,T,  \la,\ka)$ such that for every $0 \leq s \leq t \leq T$ and $0< \ga < \ga_0$, one has
\begin{align}
 \E \sup_{0 \leq r \leq t}\| \RG{n}(r, \cdot) \|^p_{\Cc^{-\al- 2\la}} &\leq C \,t^{\la \, p}    +C \ga^{p(1 - \ka)}   \;, \label{e:RgRegularity1}\\
\E \sup_{0 \leq r \leq t} \| \RG{n}(r, \cdot) -R_{\ga,s}^{:n :}(r \wedge s, \cdot) \|^p_{\Cc^{-\al-  2\la}} &\leq C \, |t-s|^{\la \, p}  +C \ga^{p(1 - \ka)} \; ,  \label{e:RgRegularity2} \\
\E \sup_{0 \leq r \leq t} \| \RG{n}(r, \cdot) - \RG{n}(r \wedge s,\cdot) \|^p_{\Cc^{-\al-  2\la}} &\leq C  \; |t-s|^{\la \, p}   +C \ga^{p(1 - \ka)} \; .  \label{e:RgRegularity3} 
 \end{align}
The same bounds hold for $\RGn{n}$.
\end{proposition}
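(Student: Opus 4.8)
The plan is to follow the treatment of iterated stochastic integrals in \cite{MourratWeber}, where Wick powers of arbitrary order are already handled, and to check that the one structural novelty here — the predictable quadratic variation \eqref{e:QuadrVarR} being genuinely random rather than deterministic up to a negligible error — causes no trouble for these \emph{a priori} bounds. The point is that Lemma~\ref{Lemma3.1} together with $(\bar\sigma-\sigma)^2\le 4$ yields the \emph{deterministic} upper bound
\[ d\langle\Rg(\cdot,x)\rangle_r \;\le\; C\,\co^2\sum_{z\in\Le}\eps^2\big(\Pg{t-r}\ae\Kg\big)^2(x-z)\,dr\;, \]
whose right-hand side is exactly the deterministic quadratic variation used in \cite{MourratWeber} up to the factor $\co^2 = 1+O(\ga^2)$; hence all the kernel estimates of Section~\ref{sec:kernels} bounding space-time integrals of $\big(\Pg{t-r}\ae\Kg\big)^2$ — in particular the argument behind \eqref{e:PKsq} — are available verbatim. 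The genuinely new averaging mechanism of the paper is not needed here: it only enters when one identifies the limit, i.e.\ in Proposition~\ref{prop:averageA} and Theorem~\ref{t:converg-lin}. Since Lemma~\ref{Lemma3.1} is stated for the stopped rates $\Cgn$, the same reasoning will cover $\RGn{n}$, and I would carry out the argument only for $\RG{n}$.

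First I would estimate moments of a single Fourier mode. For fixed $t$ and $\om$ the process $r\mapsto\hRG{n}(r,\om)$ is, by \eqref{e:ZnB}, a finite sum of iterated stochastic integrals against the modes of $\Mg$, hence a martingale on $[0,t]$. Applying the Burkholder--Davis--Gundy (BDG) inequality $n$ times, at each stage using H\"older's inequality in the finite measure $d\langle\Rg(\cdot,x)\rangle_r$ (of total mass $O(\log\ga^{-1})$ over $[0,t]$ by \eqref{e:PKsq}), the deterministic bound above, and the covariance structure of $\Mg$ to resolve the frequency convolution, I would close an induction on $n$ (the base case $n=1$ being a direct BDG estimate on $\Rg$) and obtain, for every $p\ge1$, a bound of the shape $\E|\hRG{n}(s,\om)|^p \le (C\log\ga^{-1})^{np/2}\, w_\ga(\om)^{p/2}$, with $w_\ga(\om)$ a sum over partitions $\om=\om_1+\dots+\om_n$ of products of factors $|\hKg(\om_i)|^2/(t^{-1}+2\ga^{-b}(1-\hKg(\om_i)))$, $b$ as in \eqref{e:valueCGG}; this is the same frequency-decay weight as in \cite{MourratWeber}. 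Writing $\lambda_\om = \ga^{-b}(1-\hKg(\om))\approx|\om|^2$ on the relevant range of frequencies, the elementary inequality $\int_0^s e^{-2(t-r)\lambda_\om}\,dr \le \min(s,\lambda_\om^{-1}) \le s^{\la}\lambda_\om^{\la-1}$ lets one trade $2\la$ units of spatial regularity for a factor $s^{\la}$. The residual $\ga^{p(1-\ka)}$ comes, exactly as in \cite{MourratWeber}, from the discretization — the jump corrections in the discrete It\^o formula, the high-frequency tail of the degree-$\le nN$ trigonometric polynomial $\RG{n}(r,\cdot)$, and $\co^2\ne1$.

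Next I would pass from these mode estimates to the Besov norm and take the supremum over $r$. Since $\RG{n}(r,\cdot)$ is a trigonometric polynomial, for large $q$ one bounds $\|\RG{n}(r,\cdot)\|_{\Cc^{-\al-2\la}}^q$ by a finite sum over Littlewood--Paley blocks of $2^{(-\al-2\la)jq}\|\Delta_j\RG{n}(r,\cdot)\|_{L^q(\T^2)}^q$ (with a Besov embedding loss absorbed into $\ka$), and computes $\E\|\Delta_j\RG{n}(r,\cdot)\|_{L^q}^q = \sum_{x}\eps^2\,\E|\Delta_j\RG{n}(r,x)|^q$ from the mode bounds — the computation of \cite{MourratWeber}, unchanged. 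Because each $r\mapsto\Delta_j\RG{n}(r,x)$ is a martingale on $[0,t]$, Doob's $L^q$ maximal inequality pulls the supremum over $r$ inside at the cost of a constant, giving \eqref{e:RgRegularity1}. For \eqref{e:RgRegularity3} the case $r<s$ is trivial, and for $r\ge s$ the increment $\RG{n}(r,\cdot)-\RG{n}(s,\cdot)$ is, mode by mode, a martingale in $r$ vanishing at $r=s$ whose quadratic variation involves only the window $[s,r]$; redoing the argument with $\int_0^s$ replaced by $\int_s^r$ produces $\int_s^t e^{-2(t-u)\lambda_\om}\,du \le \min(|t-s|,\lambda_\om^{-1}) \le |t-s|^{\la}\lambda_\om^{\la-1}$, hence the factor $|t-s|^{\la p}$. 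Estimate \eqref{e:RgRegularity2} is the same, together with the replacement of $\Pg{t-u}$ by $\Pg{s-u}$ in the heat factor (for $r\le s$) or by its frozen value (for $r>s$), controlled by the semigroup difference bound $\|(\Pg{t-u}-\Pg{s-u})\ae\Kg\| \lesssim |t-s|^{\la}$ on the relevant frequencies, exactly as in \cite{MourratWeber}. Since only the deterministic quadratic-variation bound and the kernel estimates were used, the identical argument gives the same bounds for $\RGn{n}$.

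I expect the obstacle here to be bookkeeping rather than a genuine difficulty: the random quadratic variation does not bite, because its deterministic upper bound suffices, and the averaging of Proposition~\ref{prop:averageA} is reserved for identifying the limit. What needs care is propagating the discretization residual $\ga^{p(1-\ka)}$ through the induction on $n$ and the Littlewood--Paley truncation at scale $\le nN$, and checking that the traded-regularity exponent $\la$ and the loss exponent $\ka$ combine as stated; the delicate analytic input — the moment/Besov estimate for iterated non-Gaussian martingale integrals, fed by the kernel bounds of Section~\ref{sec:kernels} — is imported wholesale from \cite{MourratWeber}.
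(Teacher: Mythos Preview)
Your proposal is correct and follows essentially the same route as the paper: import the iterated-integral estimates from \cite{MourratWeber} wholesale, noting that only a \emph{deterministic upper bound} on the quadratic variation is required (so the averaging mechanism of Proposition~\ref{prop:averageA} is indeed irrelevant here), together with the kernel estimates of Section~\ref{sec:kernels} adapted to the second scaling regime. One small correction: you do not need Lemma~\ref{Lemma3.1} for this bound (and it only applies to the stopped rates $\Cgn$, not to $\Cg$) --- the cruder observation $\sum_{\bar\sigma}(\bar\sigma-\sigma)^2\,\Cg(s,z,\bar\sigma)\le 5$, which is exactly \eqref{e:rate-bnd5} in the paper, suffices for both $\RG{n}$ and $\RGn{n}$, and the paper also records that the jump size of $\Mg$ is bounded by $2\delta^{-1}\eps^2\Kg$ (spins can now jump by $1$ or $2$), which feeds into the BDG argument of \cite[Lemma~4.1]{MourratWeber}.
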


\begin{proposition} \label{prop:boundQ}
For $x \in \Le$, let
\begin{equation}
\label{e:def:Qg}
\Qg(s,x) = [\Rg(\cdot,x)]_s - \langle \Rg(\cdot,x) \rangle_s\;.
\end{equation}
For any $t \ge 0$, $\ka > 0$ and $1 \le p < +\infty$, there exists $C = C(t,\ka,p)$ 
 such that for $0 < \ga < \ga_0$,
$$
 \E\sup_{x \in \Le} \sup_{0 \le s \le t} |\Qg(s,x)|^p  \le C \ga^{p(1-\ka)}.
$$
The same bound holds for $Q_{\ga,t,\nn}$, that is,
the same process as $Q_{\ga,t}$ but defined via $\Mgn$ instead of
 $\Mg$.
\end{proposition}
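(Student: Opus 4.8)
The plan is to exploit the fact that $\Rg(\cdot,x)$ is a pure-jump martingale whose jumps are explicitly controlled, so that $\Qg(s,x) = [\Rg(\cdot,x)]_s - \langle \Rg(\cdot,x)\rangle_s$ is itself a martingale (the compensated jump-quadratic-variation process), and then to apply the Burkholder--Davis--Gundy inequality to this new martingale. First I would identify the jumps of $\Rg(\cdot,x)$: a jump occurs at a time $r$ precisely when some spin $\sigma(\alpha^{-1} r,\eps^{-1}z)$ flips, and the resulting jump of $\Rg$ is $\co\,\eps^{2}\big(\Pg{t-r}\ae \Kg\big)(x-z)\,(\bar\sigma - \sigma)$ — a quantity of size $O(\eps^{2}\|\Pg{t-r}\ae\Kg\|_{L^\infty})$ in absolute value. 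The key input is that $\|\Pg{t-r}\ae\Kg\|_{L^\infty(\Le)} \le C\gamma^{-b}\log(\gamma^{-1})$ (the bound used repeatedly in the proof of Proposition~\ref{prop:averageA}, via \eqref{e:P0} and the kernel estimates of Section~\ref{sec:kernels}), together with $\eps \approx \gamma^{2}$ (resp. $\gamma^{3}$) and $\co^2 = \eps^2/(\gamma^2\alpha)$, which makes each individual jump of $\Rg$ of size $O(\gamma^{1-\ka})$ after absorbing logarithms into the $\ka$-loss.

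The main computation is then a BDG estimate for $\Qg$. Since $\Qg(\cdot,x)$ is a martingale, for $p\ge 1$ one has
\begin{equ}
\E\sup_{0\le s\le t}|\Qg(s,x)|^p \le C_p\, \E\,[\Qg(\cdot,x)]_t^{p/2}\;.
\end{equ}
Now $[\Qg(\cdot,x)]_t = \sum_{r\le t}(\Delta\Qg_r)^2 = \sum_{r\le t}(\Delta[\Rg]_r)^2 = \sum_{r\le t}(\Delta\Rg_r)^4$, because $\Qg$ jumps by the square of the jump of $\Rg$ and $\langle\Rg\rangle$ is continuous. Each factor $(\Delta\Rg_r)^2$ can be bounded by $C\gamma^{2(1-\ka)}$ deterministically by the jump-size estimate above, so
\begin{equ}
[\Qg(\cdot,x)]_t \le C\gamma^{2(1-\ka)}\sum_{r\le t}(\Delta\Rg_r)^2 = C\gamma^{2(1-\ka)}\,[\Rg(\cdot,x)]_t\;,
\end{equ}
and hence $\E[\Qg(\cdot,x)]_t^{p/2} \le C\gamma^{p(1-\ka)}\,\E[\Rg(\cdot,x)]_t^{p/2}$. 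It remains to bound $\E[\Rg(\cdot,x)]_t^{p/2}$ uniformly in $\gamma$ and $x$; but $[\Rg(\cdot,x)]_t$ differs from $\langle\Rg(\cdot,x)\rangle_t$ by $\Qg(t,x)$, and $\langle\Rg(\cdot,x)\rangle_t$ is controlled by \eqref{e:QuadrVarR} together with the bound $\int_0^t\sum_z\eps^2(\Pg{t-r}\ae\Kg)^2(x-z)\,dr \le C\log(\gamma^{-1})$ established inside the proof of Proposition~\ref{prop:averageA} (see \eqref{e:PKsq}), using that $\sum_{\bar\sigma}(\bar\sigma-\sigma)^2\Cgn(r,z,\bar\sigma)$ is bounded by a constant. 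This gives $\E\langle\Rg(\cdot,x)\rangle_t^{p/2}\le C(\log\gamma^{-1})^{p/2}$, and then a short self-improvement (or a crude a priori bound $[\Rg]_t\le \langle\Rg\rangle_t + |\Qg(t,x)|$ with $|\Qg(t,x)|\le C$ deterministically from the total number of jumps — which is finite a.s. — or more cleanly, absorbing the $\gamma^{p(1-\ka)}$ prefactor so that even a polynomially-growing bound on $\E[\Rg]_t^{p/2}$ suffices) closes the estimate at the cost of a further arbitrarily small power $\gamma^{-\ka}$, consistent with the statement.

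Finally, the supremum over $x\in\Le$ is handled exactly as in \cite{MourratWeber}: since $\Le$ has $O(\eps^{-2}) = O(\gamma^{-O(1)})$ points, a union bound over $x$ combined with choosing $p$ large in the pointwise estimate (and then Jensen/Hölder to pass back to a fixed $p$) converts the pointwise bound $\E|\Qg(s,x)|^p\le C\gamma^{p(1-\ka)}$ into $\E\sup_x\sup_s|\Qg(s,x)|^p\le C\gamma^{p(1-\ka')}$ for a slightly larger $\ka'$, which is what is claimed (after renaming $\ka'$ to $\ka$). The estimate for $Q_{\ga,t,\nn}$ is identical, using Lemma~\ref{Lemma3.1} to see that the modified rates $\Cgn$ (and hence the jump structure of $\Rgn{}$) satisfy the same bounds — indeed on $\{t>\taun\}$ the modified spin system has bounded rates by construction, and on $\{t\le\taun\}$ the rates agree with the unmodified ones. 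The main obstacle I anticipate is bookkeeping the logarithmic factors and the $\gamma^{-b}$ blow-up of $\|\Pg{t-r}\ae\Kg\|_{L^\infty}$ near $r=t$ carefully enough to see that, after multiplication by $\eps^2\co$, the jumps really are $O(\gamma^{1-\ka})$ in both scaling regimes; this is where the precise relations $\eps\approx\gamma^2,\alpha=\gamma^2$ (resp. $\eps\approx\gamma^3,\alpha=\gamma^4$) and the kernel bounds of Section~\ref{sec:kernels} must be invoked with the correct value of $b$.
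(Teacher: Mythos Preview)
Your approach is essentially the paper's: apply BDG to the martingale $Q_{\gamma,t}$, control its quadratic variation via the jump-size estimate $|\Delta R_{\gamma,t}|\le C\gamma^{1-\ka}$ together with the kernel bounds of Section~\ref{sec:kernels}, and then take a union bound over $x\in\Le$. The one simplification in the paper is to work with the \emph{predictable} bracket $\langle Q_{\gamma,t}(\cdot,x)\rangle_t\le \frac{C\eps^6}{\alpha\delta^4}\int_0^t\sum_{z}\eps^2(P^\gamma_{t-s}\ae K_\gamma)^4(z)\,ds$, which after extracting $\|P^\gamma_{t-s}\ae K_\gamma\|_{L^\infty(\Le)}^2\le C\gamma^4/\eps^4$ and using $\eps^2\gamma^4/(\alpha\delta^4)\le 2\gamma^2$ is \emph{deterministically} bounded by $C\gamma^{2(1-\ka)}$ in both regimes --- so the circularity you flag when trying to bound $\E[R_{\gamma,t}]_t^{p/2}$ never arises (and note that your fallback ``$|Q_{\gamma,t}|\le C$ deterministically from the total number of jumps'' is not valid, since that number is a.s.\ finite but not bounded).
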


One important result is that these iterated integrals are almost Hermite polynomials with renormalization constant chosen as $[\Rg(\cdot,x)]_s$.

\begin{proposition} 
\label{p:discrete-Wick}
Define
\begin{equation}
\label{e:def:EGn}
\EG{n}(s,x) \eqdef H_n(\Rg(s,x), [\Rg(\cdot,x)]_s) - \RG{n}(s,x)\;,
\end{equation}
for any $x \in \T^2$. Here, we view $[\Rg(\cdot,x)]_s$ as defined on all of $\T^2$, by extending it as a trigonometric polynomial of degree $\leq N$. 
Then  for any $n \in \N$, $\ka > 0$, $t > 0$ and $1 \le p < \infty$, there exists $C =C(n,p,t,\ka)> 0$ such that for every sufficiently small $\ga >0$,
\begin{equation}
\label{e:discrete-Wick}
 \E \sup_{x \in \T^2} \sup_{0 \le s \le t} | \EG{n}(s,x) |^p  \le C \ga^{p(1-\ka)} \notag.
\end{equation}
The same bound holds for $E_{\ga,t,\nn}^{:n:}$ - the same process as $\EG{n}$ but defined via $\Mgn$ instead of
 $\Mg$.
\end{proposition}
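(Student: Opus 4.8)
The plan is to prove the stated bound for $\EG{n}$ --- and, with the identical argument, for its $\nn$-version $E_{\ga,t,\nn}^{:n:}$ obtained by replacing $\Mg$ with $\Mgn$ and invoking Propositions~\ref{prop:RGBoundOne} and \ref{prop:boundQ} in their $\nn$-versions --- by induction on $n$, following \cite{MourratWeber}. First I would fix $t$ and work at a fixed $x\in\Le$, where $[\Rg(\cdot,x)]_s$ is the genuine quadratic variation of the pure-jump martingale $s\mapsto\Rg(s,x)$; the passage from $\sup_{x\in\Le}$ to $\sup_{x\in\T^2}$ is then carried out exactly as in \cite{MourratWeber}, using the Fourier-extension definitions \eqref{e:ZnB}, \eqref{e:DefZn}, \eqref{e:def:EGn} and the fact that $\EG n(s,\cdot)$ is a trigonometric polynomial of degree $\le nN$.

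The algebraic heart is the change-of-variables formula for semimartingales applied to $s\mapsto H_n(\Rg(s,x),[\Rg(\cdot,x)]_s)$. Recall from the excerpt that $H_m(x,c)=e^{-c\partial_x^2/2}x^m$, whence $\partial_x H_n=n\,H_{n-1}$ and --- decisively --- the Hermite heat-equation identity $\tfrac12\partial_x^2 H_n+\partial_c H_n=0$. Since $\Rg(\cdot,x)$ has no continuous part and $[\Rg(\cdot,x)]$ is pure-jump with $\Delta[\Rg(\cdot,x)]_r=(\Delta\Rg_r(x))^2$, Itô's formula gives
\[ H_n(\Rg(s,x),[\Rg(\cdot,x)]_s)=n\int_0^s H_{n-1}(\Rg(r^-,x),[\Rg(\cdot,x)]_{r^-})\,d\Rg(r,x)+\sum_{r\le s}\rho_r(x), \]
where the contribution $\int_0^s\partial_c H_n\,d[\Rg(\cdot,x)]_r$ combines with the second-order Taylor term $\tfrac12\partial_x^2 H_n\,(\Delta\Rg_r(x))^2$ in the jump sum and, by the heat-equation identity, cancels exactly; thus $\rho_r(x)$ consists only of the third-and-higher-order Taylor terms, with $|\rho_r(x)|\le C_n|\Delta\Rg_r(x)|^3\bigl(1+|\Rg(r^-,x)|+[\Rg(\cdot,x)]_{r^-}\bigr)^{n}$ (using $|\Delta\Rg_r(x)|\le1$). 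Comparing with $\RG n(s,x)=n\int_0^s\RG{n-1}(r^-,x)\,d\Rg(r,x)$ --- which, via \eqref{e:ZnB}, holds for every $x$ --- and writing $\EG{n-1}=H_{n-1}(\Rg,[\Rg])-\RG{n-1}$, one obtains the recursion
\[ \EG n(s,x)=n\int_0^s\EG{n-1}(r^-,x)\,d\Rg(r,x)+\Xi_n(s,x),\qquad\Xi_n(s,x):=\sum_{r\le s}\rho_r(x), \]
with base case $\EG1\equiv0$ (and $\EG0\equiv0$).

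Then I would estimate both terms in every $L^p$, uniformly in $s\le t$ and $x\in\Le$. For $\Xi_n$ I would combine: (i) the deterministic bound $\sup_{r,x}|\Delta\Rg_r(x)|\le C\gamma\log(\gamma^{-1})$ --- a single spin flip changes $h_\gamma$ at any site by at most $2\kappa_\gamma(\cdot)=O(\gamma^2)$, and after rescaling by $\delta^{-1}=\gamma^{-1}$, applying the mass-preserving heat operator $\Pg{t-r}$, and using the kernel bounds of Section~\ref{sec:kernels}, only a single power of $\gamma$ (up to logarithms) survives; (ii) $[\Rg(\cdot,x)]_s\le\langle\Rg(\cdot,x)\rangle_s+|\Qg(s,x)|$, where $\langle\Rg(\cdot,x)\rangle_s\le C\log(\gamma^{-1})$ deterministically by the bound on the right-hand side of \eqref{e:PKsq} together with $0\le\sum_{\bar\sigma}(\bar\sigma-\sigma)^2\Cgn\le C$, and $\sup_{x,s}|\Qg(s,x)|$ has all $L^p$-moments of order $\gamma^{1-\ka}$ by Proposition~\ref{prop:boundQ}; (iii) $\sup_{x,r\le t}|\Rg(r,x)|\le C\gamma^{-b\al}\sup_{r\le t}\|\Rg(r,\cdot)\|_{\Cc^{-\al}}$ by Bernstein's inequality ($\Rg(r,\cdot)$ being a trigonometric polynomial of degree $\le N$, with $b\in\{2,3\}$ as in the proof of Lemma~\ref{Lemma3.1}), whose $L^p$-moments are $O(\gamma^{-b\al})$ by Proposition~\ref{prop:RGBoundOne} with $n=1$, where $\al$ may be taken as small as we please. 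Inserting (i)--(iii) into $|\Xi_n(s,x)|\le C_n\bigl(\sup_r|\Delta\Rg_r(x)|\bigr)\,[\Rg(\cdot,x)]_s\,\bigl(1+\sup_r|\Rg(r,x)|+[\Rg(\cdot,x)]_s\bigr)^{n}$ and taking $L^p$-norms gives $\E\sup_{x\in\Le}\sup_{s\le t}|\Xi_n(s,x)|^p\le C_n\,\gamma^{p}\,\gamma^{-nbp\al}(\log\gamma^{-1})^{(n+1)p}\le C\,\gamma^{p(1-\ka)}$ once $\al$ and $\gamma$ are small. For the martingale term, Burkholder--Davis--Gundy at fixed $x$ bounds $\E\sup_{s\le t}\bigl|n\int_0^s\EG{n-1}(r^-,x)\,d\Rg(r,x)\bigr|^p$ by $C_p\,\E\bigl(\sup_{r\le t}|\EG{n-1}(r,x)|^2\,[\Rg(\cdot,x)]_t\bigr)^{p/2}$, which by Cauchy--Schwarz, the induction hypothesis (an $L^{2p}$-bound of order $\gamma^{2p(1-\ka)}$) and $[\Rg(\cdot,x)]_t\le C\log(\gamma^{-1})$ is $\le C\,\gamma^{p(1-\ka)}$ after a harmless adjustment of $\ka$. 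From these fixed-$x$ estimates one gets $\sup_{x\in\Le}$ --- and then, as above, $\sup_{x\in\T^2}$ --- by a union bound over the grid, the resulting polynomial-in-$\gamma^{-1}$ cost being absorbed by taking $p$ large and then using that an $L^{p_0}$-bound yields an $L^p$-bound for $p\le p_0$ by Jensen.

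As for the main obstacle: there really is none of any depth here, and this is precisely the proposition for which our analysis is only a ``minor modification'' of \cite{MourratWeber} --- the decisive cancellation $\tfrac12\partial_x^2 H_n+\partial_c H_n=0$ is insensitive to the more intricate fluctuation structure that forces the new coupling argument of Section~\ref{sec:linear}. The only thing demanding care is the bookkeeping: one must check that the single factor of $\gamma$ extracted from the cubic jump remainder $\rho_r$ dominates every logarithmic factor and every $\gamma^{-b\al}$ loss incurred when trading the $\Cc^{-\al}$-estimates of Proposition~\ref{prop:RGBoundOne} for pointwise estimates, and that the supremum over $x\in\T^2$ (rather than over $x\in\Le$) is legitimately recovered from \eqref{e:ZnB}, \eqref{e:DefZn}, \eqref{e:def:EGn}, exactly as in \cite{MourratWeber}.
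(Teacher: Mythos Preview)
Your proposal is correct and follows essentially the same approach as the paper. The paper's own proof of this proposition consists of the single sentence ``Proposition~\ref{p:discrete-Wick} is then a consequence of the first two propositions by the proof in \cite{MourratWeber}, and therefore nothing needs to be re-proved''; you have reconstructed that \cite{MourratWeber} argument --- the Hermite heat-equation cancellation $\tfrac12\partial_x^2 H_n+\partial_c H_n=0$, the cubic jump remainder, the recursion on $\EG{n}$, and the use of Propositions~\ref{prop:RGBoundOne} and~\ref{prop:boundQ} as inputs --- and correctly identified that nothing in it is sensitive to the new fluctuation structure of the Blume--Capel model.
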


\begin{proof}[of Prop.~\ref{prop:RGBoundOne} - \ref{p:discrete-Wick}]
For the case of the Kac Ising model, these results are Prop~4.2, Lemma~5.1 and Prop~5.3
in \cite{MourratWeber}.
Several  modifications of these proofs are necessary for the case of our Blume-Capel model.  

The first necessary modification is due to the difference in the scalings \eqref{e:Phi4-scaling} and \eqref{e:Phi6-scaling}. 
This difference  comes into play via the estimates on the kernels $K_\ga$ and $P^\ga_t$ 
used throughout the proofs. We list all these kernel estimates in Section~\ref{sec:kernels}. These estimates with modifications in the second regime 
lead to the desired bounds {\it mutatis mutandis}.


Another necessary modification of the proof for the case of our Blume-Capel model
is due to the fact that the martingale we use to build $\ZG{n}$ is different.
For Proposition~\ref{prop:RGBoundOne}, the only place where the martingale enters into play is \cite[Lemma~4.1]{MourratWeber}, which is a consequence of Burkholder-Davis-Gundy inequality.
The proof of that lemma only used two facts that depend on the martingale. 
First, a jump of the spin at $\eps^{-1}z$ causes a jump of size $2\delta^{-1}\eps^2 K_\ga(y-z)$ for $M_\ga(y)$, and in our case this becomes an upper bound of the jump size since a spin could jump by $1$ or $2$.
 Second, in the quadratic variation of $\Mg$
which was given by 
\begin{equ}
\frac{d}{dt} \langle \Mg(\cdot, x ), \Mg (\cdot, y) \rangle_t  
=4 c_{\gamma,2}^2 
\sum_{z \in \Le}\eg^2  \Kg( x -z) \, \Kg(y-z) \,  \Cg\big(t, z\big)  \;, 
\end{equ} 
and
$C_\gamma $ is a  rate function therein which is bounded between $0$ and $1$.
For our case, in the quadratic variation given in \eqref{Gauss2}, one also has
\begin{equ} [e:rate-bnd5]
0\le
\sum_{\bar{\sigma} \in \{ \pm 1,0\} } (\bar{\sigma} - \sigma(\alpha^{-1} s, \eps^{-1}z))^2 \Cg\big(s,  z, \bar{\sigma})
\le 5 \;.
\end{equ}
Since the desired bound in \cite[Lemma~4.1]{MourratWeber} allows a proportionality constant, nothing else needs to be proved.

For  Proposition~\ref{prop:boundQ}, by Burkholder-Davis-Gundy inequality, one needs to bound the quadratic variation $\langle \Qg(\cdot,x) \rangle_t$,
which can be again explicitly expressed as in the case for $\Rg(\cdot, x)$ in \eqref{e:QuadrVarR}; using the bound \eqref{e:rate-bnd5} one eventually obtains
\begin{equ}
\langle \Qg(\cdot,x) \rangle_t
 \le  \frac{C\eps^6}{\ag \dg^4 } \int_0^t \sum_{z \in \Le} \eps^2 (\Pg{t-s} \ae  \Kg)^4(z) \, ds\;.
\end{equ}
Using the bound $\|\Pg{t-s} \ae  \Kg\|_{L^\infty(\Le)} \le C\frac{\ga^2}{\eps^2}$
and $(\eps^2 \ga^4 / \ag \dg^{4}) \le 2\ga^2$ which turn out to hold in {\it both} regimes,
the proof of \cite[Lemma~5.1]{MourratWeber} again goes through.

Proposition~\ref{p:discrete-Wick} is then a consequence of the first two propositions by the proof in \cite{MourratWeber},
and therefore nothing needs to be re-proved.
\end{proof}

One then has the following tightness and convergence results.

\begin{proposition}\label{prop:tight}
For every $\nn \in \N$ and  $\al >0$, the family $\{ Z_{\ga,\nn}^{:n:}, \ga \in (0,\frac13) \}$ is tight on $\Dd(\R_+,\Ca) $. Any weak limit is supported on $\Cc(\R_+,\Ca) $. Furthermore, for any $p \geq 1$ and $T>0$, we have
\begin{equation}\label{e:FinalAPriori}
\sup_{\gamma \in (0,\frac13)}
	 \E  \sup_{0 \leq t \leq T} 
	  \big\| Z_{\ga,\nn}^{:n:}(t, \cdot ) \big\|_{\Ca}^p   < \infty\;.
\end{equation}
\end{proposition}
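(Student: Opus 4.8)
The plan is to read off all three assertions — tightness on $\Dd(\R_+,\Ca)$, support of weak limits on $\Cc(\R_+,\Ca)$, and the uniform moment bound \eqref{e:FinalAPriori} — directly from the a priori estimates of Proposition~\ref{prop:RGBoundOne} applied to the stopped processes $\RGn{n}$, following the argument of \cite{MourratWeber}. Fix $\nn\in\N$ and $\al>0$ as in the statement; choose $\al'\in(0,\al)$ and $\la\in(0,\tfrac12)$ with $\al'+2\la<\al$, and apply Proposition~\ref{prop:RGBoundOne} with $\al'$ in place of $\al$. Since the embedding $\Cc^{-\al'-2\la}\hookrightarrow\Ca$ is compact, it suffices to prove tightness and the moment bound with the norm of $\Cc^{-\al'-2\la}$. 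The first thing to record is that $\ZGn{n}(t,\cdot)=\RGn{n}(t,\cdot)$, where on the right the ``fixed-time'' parameter in \eqref{e:ZnA}--\eqref{e:DefZn} is taken equal to $t$, and likewise $\RGn{n}$ built with fixed-time parameter $s$, evaluated at its own terminal time $s$, equals $\ZGn{n}(s,\cdot)$. Evaluating \eqref{e:RgRegularity1}--\eqref{e:RgRegularity3} at $r=t$ (and letting $s\uparrow t$ in \eqref{e:RgRegularity3} for the jump sizes) then gives, for all $p\ge1$, $\ka\in(0,1]$ and $0\le s\le t\le T$,
\begin{equs}
\E\big\|\ZGn{n}(t,\cdot)\big\|_{\Cc^{-\al'-2\la}}^p&\le Ct^{\la p}+C\ga^{p(1-\ka)}\;,\\
\E\big\|\ZGn{n}(t,\cdot)-\ZGn{n}(s,\cdot)\big\|_{\Cc^{-\al'-2\la}}^p&\le C|t-s|^{\la p}+C\ga^{p(1-\ka)}\;,
\end{equs}
together with $\ZGn{n}(0,\cdot)=0$ and $\E\sup_{t\le T}\|\ZGn{n}(t,\cdot)-\ZGn{n}(t^-,\cdot)\|_{\Cc^{-\al'-2\la}}^p\le C\ga^{p(1-\ka)}$.

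Next I would establish \eqref{e:FinalAPriori} by the dyadic chaining argument of \cite{MourratWeber}: write $\ZGn{n}(t)=\ZGn{n}(t)-\ZGn{n}(0)$ as a telescoping sum along the dyadic approximations of $t\in[0,T]$, control the coarse increments by the second display with $p$ taken so large that $\la p>1$, cut the dyadic resolution at a scale $\ga^{\varrho}$ with $\varrho<p(1-\ka)$ so that the accumulated $\ga^{p(1-\ka)}$ errors stay bounded, and control the sub-$\ga^{\varrho}$ scales by the deterministic a priori bounds (between consecutive spin flips $\ZGn{n}$ evolves smoothly under the $\Delta_\ga$-heat semigroup). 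This yields $\sup_{0<\ga<1/3}\E\sup_{0\le t\le T}\|\ZGn{n}(t,\cdot)\|_{\Cc^{-\al'-2\la}}^p<\infty$ for every $p$ — the finitely many $\ga\in[\ga_0,\tfrac13)$ being harmless since each is a single $\Dd$-valued process — which is \eqref{e:FinalAPriori} (with the weaker norm $\Ca$), and which, via the compact embedding above, gives compact containment in $\Ca$.

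For tightness in $\Dd(\R_+,\Ca)$ I would invoke the tightness criterion for c\`adl\`ag Banach-space--valued processes used in \cite{MourratWeber}: read in $\Ca$, the second display is a Kolmogorov-type modulus estimate with an additive error $\ga^{p(1-\ka)}$ that vanishes as $\ga\to0$; taking $p$ with $\la p>1$, the $|t-s|^{\la p}$ term controls the continuous part of the modulus of continuity while the term $\ga^{p(1-\ka)}$ is exactly what is needed to absorb the jumps, so the criterion applies on every $[0,T]$ and hence on $\R_+$. Finally, the jump bound $\E\sup_{t\le T}\|\ZGn{n}(t,\cdot)-\ZGn{n}(t^-,\cdot)\|_{\Ca}^p\le C\ga^{p(1-\ka)}\to0$ forces any weak limit to have a.s. no jumps, i.e. to be supported on $\Cc(\R_+,\Ca)$.

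The main obstacle is not located here: it is entirely contained in Proposition~\ref{prop:RGBoundOne}, whose proof (given above) is where the different scaling relations \eqref{e:Phi4-scaling}, \eqref{e:Phi6-scaling} and the different martingale with quadratic variation \eqref{Gauss2} must be accommodated. The only delicate point in the present step is to feed those estimates into a \emph{c\`adl\`ag} rather than continuous tightness criterion, correctly attributing the $|t-s|$-independent error $\ga^{p(1-\ka)}$ to the jumps so that the limit comes out continuous — which is handled exactly as in \cite{MourratWeber}.
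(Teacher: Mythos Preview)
Your proposal is correct and follows essentially the same approach as the paper: the paper's proof simply says that once Proposition~\ref{prop:RGBoundOne} (in particular \eqref{e:RgRegularity1} and \eqref{e:RgRegularity2}) is established, the result follows exactly as in \cite[Proposition~5.4]{MourratWeber}, and what you have written is a faithful sketch of that argument --- deducing increment and jump bounds from Proposition~\ref{prop:RGBoundOne}, feeding them into a Kolmogorov-type c\`adl\`ag tightness criterion via a compact embedding $\Cc^{-\al'-2\la}\hookrightarrow\Ca$, and obtaining the uniform moment bound by a dyadic chaining cut at scale $\ga^{\varrho}$.
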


\begin{proof}
Once Proposition~\ref{prop:RGBoundOne} (in particular the bounds \eqref{e:RgRegularity1}
and \eqref{e:RgRegularity2}) is shown, this tightness result follows in exactly the same way as
 \cite[Proposition~5.4]{MourratWeber}.
\end{proof}

Recall that we have defined $Z^{: m:}$ below \eqref{e:def-Wick-eps}.
\begin{proposition}
\label{t:converg-lin-Wickbis}
For every $\nn \in \N$ and $n \in \N$, the processes $(\ZGn{1},$ $\ldots,  \ZGn{n})$  defined above converge (jointly) in law to $(\ZZ{1}, \ldots, \ZZ{n})$ with respect to the topology of  $\Dd(\R_+, \Ca)^n$.
\end{proposition}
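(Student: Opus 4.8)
The plan is to follow the proof of the analogous convergence result in \cite{MourratWeber}, the one genuinely new point being that the quadratic variation of $\Mgn$ is no longer deterministic up to a soft error, but becomes so only after the temporal--spatial averaging of Proposition~\ref{prop:averageA}. Since a finite collection of tight families is jointly tight, Proposition~\ref{prop:tight} immediately gives that $(\ZGn{1},\dots,\ZGn{n})$ is tight on $\Dd(\R_+,\Ca)^n$, that every subsequential weak limit $(\bar Z^{(1)},\dots,\bar Z^{(n)})$ is supported on $\Cc(\R_+,\Ca)^n$, and that the a priori bound \eqref{e:FinalAPriori} holds. It remains to show that every such limit has the law of $(\ZZ{1},\dots,\ZZ{n})$, and by the martingale-problem characterisation of \cite[Theorem~6.1 and Remark~C.4]{MourratWeber} --- the same one already used for $n=1$ in the proof of Theorem~\ref{t:converg-lin} --- it suffices to test against trigonometric polynomials $\phi$. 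For such $\phi$ and $\ga$ small enough, $s\mapsto(\RGn{k}(s,\cdot),\phi)$ is, for each $k\le n$, a genuine martingale by \eqref{e:ZnA}, whose bracket and cross-brackets can be written recursively in terms of the lower iterated integrals $\RGn{k-1}$ and the per-site rate function $\sum_{\bar\sigma}(\bar\sigma-\sigma)^2\Cgn$, by combining \eqref{e:ZnA} with \eqref{e:QuadrVarR}.

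To identify the renormalisation appearing in the limit I would use the discrete Wick relation of Proposition~\ref{p:discrete-Wick}: $\ZGn{n}(t,x)=H_n(\Zgn(t,x),[\Rgn(\cdot,x)]_t)+O(\ga^{1-\ka})$ in every $L^p$ sup-norm over $[0,T]\times\T^2$. By Proposition~\ref{prop:boundQ} the random bracket $[\Rgn(\cdot,x)]_t$ can be replaced by its predictable version $\langle\Rgn(\cdot,x)\rangle_t$ at the same cost, and by Proposition~\ref{prop:averageA} with $s=t$ the latter equals the \emph{deterministic}, $x$-independent quantity $\CGG(t):=\frac{2}{\beta_c}\int_0^t\sum_{z\in\Le}\eg^2(\Pg{t-r}\ae\Kg)^2(z)\,dr$ up to an error small in $L^p(\T^2)$. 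Since $H_n(x,c)$ is polynomial of low degree in both arguments and $\|\Zgn(t)\|_{L^\infty}$ is bounded by a small negative power of $\ga$ times $\|\Zgn(t)\|_{\Ca}$ (Bernstein's inequality, $\Zgn(t)$ having Fourier support in $\{|\om|\le N\}$; cf.\ \cite[Lemma~A.3]{MourratWeber}), whose moments are under control by \eqref{e:FinalAPriori}, these substitutions produce a discrepancy $\ZGn{n}-H_n(\Zgn,\CGG(\cdot))$ that tends to $0$ in $L^p(\Omega)$ for the norm $\sup_{[0,T]}\|\cdot\|_\Ca$: for the contributions from Propositions~\ref{p:discrete-Wick} and~\ref{prop:boundQ} one uses $\|\cdot\|_\Ca\le C\|\cdot\|_{L^\infty}$, and for the $\tilde E$-term one uses the embedding $L^p(\T^2)\hookrightarrow\Ca$ for $p$ large together with the time-regularity estimates \eqref{e:RgRegularity2}--\eqref{e:RgRegularity3} and a Kolmogorov argument. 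Finally, a soft computation with the kernel estimates of Section~\ref{sec:kernels} shows that $\CGG(t)$ coincides, up to $o(1)$, with the continuum renormalisation $\Ce(t)$ of \eqref{e:coft} at the regularisation scale $\eps=\eps(\ga)$ dictated by \eqref{e:Phi4-scaling} (resp.\ \eqref{e:Phi6-scaling}), so that $H_n(\Zgn,\CGG(\cdot))$ is precisely a discrete Wick power normalised as in \eqref{e:def-Wick-eps}.

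With these reductions one identifies $\bar Z^{(n)}$ recursively in $n$, the base case $n=1$ being Theorem~\ref{t:converg-lin}. Assuming $(\bar Z^{(1)},\dots,\bar Z^{(n-1)})$ has been identified with $(\ZZ{1},\dots,\ZZ{n-1})$, one writes the martingale relation for $(\RGn{n}(s,\cdot),\phi)$ and its bracket, truncates at the stopping times $T_{L,\ga}=\inf\{t:\|\ZGn{n}(t)\|_\Ca>L\}$ exactly as in the proof of Theorem~\ref{t:converg-lin} to obtain uniform integrability from \eqref{e:FinalAPriori}, replaces $\Dg$ by $\Delta$ with a negligible error, and (by Proposition~\ref{prop:averageA}, applied after polarisation also to the cross-brackets $\langle\Rgn(\cdot,x),\Rgn(\cdot,y)\rangle$) replaces the rate function $\sum_{\bar\sigma}(\bar\sigma-\sigma)^2\Cgn$ by its average $\frac{2}{\beta_c}$. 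One concludes that $\bar Z^{(n)}$ solves the limiting martingale problem, whose solution is exactly the iterated Wiener integral $\ZZ{n}$ against $\sqrt{2/\beta_c}\,dW$ with the renormalisation $\Ce(t)$; letting $\phi$ range over all trigonometric polynomials determines the joint law of $(\bar Z^{(1)},\dots,\bar Z^{(n)})$.

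The main obstacle is exactly this passage to the limit: one cannot interchange $\ga\to0$ with the iterated stochastic integrals \eqref{e:ZnA}, since $\Rgn$ converges only in law and only to a distribution-valued process. The discrete Wick relation of Proposition~\ref{p:discrete-Wick} is the device that circumvents this by trading the iterated integral for a Hermite polynomial of $\Zgn$ with an explicit renormalisation constant; but --- and this is where the argument departs from \cite{MourratWeber} --- that constant $[\Rgn(\cdot,x)]_t$ is \emph{not} deterministic up to a soft error, it is deterministic only after the averaging of Proposition~\ref{prop:averageA}, whose error is controlled merely in $L^p$. Propagating this $L^p$ (rather than $L^\infty$) error through the identification, and checking in particular that it does not spoil the convergence of the brackets in the martingale problem, is the technical heart of the argument, and is precisely the reason the whole scheme must be run in an $L^p$-based functional setting.
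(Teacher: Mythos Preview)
Your proposal assembles the right ingredients --- tightness from Proposition~\ref{prop:tight}, the discrete Wick relation of Proposition~\ref{p:discrete-Wick}, the replacement $[\cdot]\leadsto\langle\cdot\rangle$ via Proposition~\ref{prop:boundQ}, and the averaging of Proposition~\ref{prop:averageA} --- but the crucial step that turns these into a proof is missing, and without it neither of your two suggested routes closes.

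Your second paragraph reduces $\ZGn{n}$ to $H_n(\Zgn,\CGG(\cdot))$ up to small errors and then implicitly asks that $H_n(\Zgn,\CGG(\cdot))\to\ZZ{n}$ follow from $\Zgn\to Z$ and $\CGG(t)\to\Ce(t)$. This is exactly the step that fails: the limiting $Z(t,\cdot)$ lives only in $\Ca$, and for $n\ge2$ the map $f\mapsto H_n(f,c)$ contains a product $f^2$ and is \emph{not} continuous on $\Ca$. The symbol $H_n(Z,\Ce(t))$ has no direct meaning --- $\ZZ{n}$ is \emph{defined} as the limit of $H_n(Z_\eps,\Ce(t))$, so the argument is circular. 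Your third paragraph then falls back on a recursive martingale problem for $(\RGn{n}(s,\cdot),\phi)$, but $\ZZ{n}$ is not the solution of an SPDE and there is no standard martingale-problem characterisation of it on the limiting side to compare with; moreover the bracket of that discrete martingale involves pointwise squares of $\RGn{n-1}(r,\cdot)$ integrated up to the singular endpoint $r=t$, and passing to the limit there would again require the very product estimates one is trying to establish.

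The paper's proof bypasses all of this with the \emph{diagonal argument} of \cite[Theorem~6.2]{MourratWeber}. For fixed times $t_1<\cdots<t_K$ one inserts auxiliary $s_i<t_i$ and compares $\bZg(t_i)$ with $\bRg{t_i}(s_i)$ (controlled by \eqref{e:RgRegularity3}) and $\bZ(t_i)$ with $\bR_{t_i}(s_i)$ (controlled by \eqref{e:regR0n}), reducing to the convergence in law of $\bRg{t_i}(s_i)$ to $\bR_{t_i}(s_i)$. The point is that $R_{\ga,t_i,\nn}(s_i,\cdot)=\Pg{t_i-s_i}\Zgn(s_i,\cdot)$ and its limit $R_{t_i}(s_i,\cdot)=P_{t_i-s_i}Z(s_i,\cdot)$ are \emph{genuine smooth functions} (the heat semigroup over the gap $t_i-s_i>0$ regularises), so that $H_n$ may be applied pointwise and is continuous in both its arguments. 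Then Propositions~\ref{p:discrete-Wick} and~\ref{prop:boundQ} reduce matters to the joint convergence of $R_{\ga,t_i,\nn}(s_i,\cdot)$ (in $L^\infty$, by Theorem~\ref{t:converg-lin} and continuity of the semigroup) and of $\langle R_{\ga,t_i,\nn}(\cdot,\cdot)\rangle_{s_i}$ (in $L^p$, by Proposition~\ref{prop:averageA}). The $L^p$ control on $\tilde E$ that you rightly emphasise is therefore used only for this last, \emph{finite-dimensional and smooth}, identification; no Kolmogorov argument in time and no $L^p\hookrightarrow\Ca$ embedding for the error is required.
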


\begin{proof}
Since by Proposition~\ref{prop:tight} for every $n$, the family of vectors $(\ZGn{1}, \ldots, \ZGn{n})$, $\ga \in (0,\frac13)$ is tight with respect to the topology of $\Dd(\R_+,\Ca)^n$,
we only need to show convergence of the finite dimensional distributions. 
We follow the diagonal argument
as in \cite[Theorem~6.2]{MourratWeber}.
Define
$$
R_t(s,x) \eqdef \sqrt{2/\beta_c} \int_{r=0}^s  P_{t-r} \,  dW (r,x) \;,
$$
where $\beta_c$ is a critical value of $\beta$ as above. 
The process $s \mapsto R_t(s,x)$ for $s < t$ is a {\it continuous} martingale.
For $n>1$ define 
\begin{equation}\label{e:Wick0}
\RR{n}(s,x) \eqdef n \int_{r=0}^s \RR{n-1}(r,x) \; d R_t(r,x) =  H_n\Ll(R_t(s,x), \langle R_t(\cdot,x) \rangle_s\Rr)  \;.
\end{equation}
For $s<t$ $\RR{n}(s,x)$ is a regular approximations of  the limiting objects $\ZZ{n}(t, \cdot) $; indeed, as discussed in \cite[(3.10)]{MourratWeber},
 for all $\al>0, \; 0 \leq  \la \leq 1, \; p \geq2$ and $T >0$,  there exists $C= C(\al,\la, p,T)$ such that 
\begin{equation}
\label{e:regR0n}
\E\| \ZZ{n}(t, \cdot) - \RR{n}(s,\cdot) \|^p_{\Cc^{-\al-\la}} \le C |t-s|^{\frac{\lambda p}{2}}
\end{equation}
for all $0 \leq s \leq t \leq T$. Write 
\begin{equs}
\bZg = (\ZGn{1}, \ldots, \ZGn{n})\;, &\qquad \bZ = (\ZZ{1}, \ldots, \ZZ{n})\;, \\
\bRg{t} = (\RGn{1}, \ldots, \RGn{n}) \;, &\qquad \bR = (\RR{1}, \ldots, \RR{n}) \;.
\end{equs}
Fix $K \in \N$ and $t_1 < t_2 < \ldots < t_K$. Let $F\colon (\Ca)^{n \times K} \to \R$ be bounded and uniformly continuous.  
For $s_1 < t_1, \,  \ldots, s_K < t_K$, 
\begin{equs}[e:CiL1]
\big| &\E\, F\big(\bZg(t_1), \ldots, \bZg(t_K)\big)  - \E \,F\big(\bZ(t_1), \ldots, \bZ(t_K)\big) \big| \notag \\
&\leq  \E\,\big|  F\big(\bZg(t_1), \ldots, \bZg(t_K)\big)  -\,F\big(\bRg{t_1}(s_1), \ldots, \bRg{t_K}(s_K)\big) \big| \notag \\
& \qquad + \big| \E\, F\big(\bRg{t_1}(s_1), \ldots, \bRg{t_K}(s_K)\big)  - \E \,F\big(\mathbf{R}_{t_1}(s_1), \ldots, \mathbf{R}_{t_K}(s_K)\big) \big|\notag \\
& \qquad +  \E\,\big|  F\big(\mathbf{R}_{t_1}(s_1), \ldots, \mathbf{R}_{t_K}(s_K)\big)  -\,F\big(\bZ(t_1), \ldots, \bZ(t_K)\big) \big| \;. 
\end{equs}
The estimates \eqref{e:regR0n} and \eqref{e:RgRegularity3} yield moment bounds of arbitrary order of $\| \bZg(t_i) - \bRg{t_i}(s_i) \big\|_{(\Ca)^n}$ uniformly in~$\ga$. We can thus make the first and the third terms on the right-hand side of \eqref{e:CiL1} small uniformly in $\ga$ by choosing $|t_i - s_i|$ small enough. 

Some extra care has to be taken in the case of our model for the second term on the right-hand side of \eqref{e:CiL1}.
By Proposition~\ref{p:discrete-Wick}, it suffices to show that 
$$
H_\ell(R_{\ga,t_i,\nn}(s_i,x), [R_{\ga,t_i,\nn}(\cdot,x)]_{s_i}) \qquad \ell = 1, \ldots, n, \quad i = 1, \ldots, K 
$$
converges in law to $(\mathbf{R}_{t_1}(s_1), \ldots, \mathbf{R}_{t_K}(s_K))$ in $(\Ca)^K$. By \eqref{e:Wick0} and Prop~\ref{prop:boundQ}, it suffices to show the two convergences in law
\begin{equs}
\big( R_{\ga,t_1,\nn}(s_1) , \ldots, R_{\ga,t_K,\nn}(s_K) \big) & \xrightarrow[\ga \to 0]{} \big(R_{t_1}(s_1), \ldots, R_{t_K}(s_K) \big)\; , \\
\big( \langle R_{\ga,t_1,\nn}(\cdot,\cdot)\rangle_{s_1}, \ldots, \langle R_{\ga,t_K,\nn}(\cdot,\cdot)\rangle_{s_K} \big)  &\xrightarrow[\ga \to 0]{} \big( \langle R_{t_1}(\cdot,\cdot)\rangle_{s_1}, \ldots, \langle R_{t_K}(\cdot,\cdot)\rangle_{s_K} \big) \;, 
\end{equs}
for a suitable topology, e.g. $(L^\infty)^K$ in the first convergence and $(L^p)^K$ for $p$ large enough for the second convergence. 
For the first convergence,
note that  $R_{\ga,t_i,\nn}(s_i) = P_{t_i-s_i}^\ga Z_{\ga,\nn}(s_i)$. \cite[Corollary~8.7]{MourratWeber} then gives an error control if $P_{t_i-s_i}^\ga$ is replaced by the continuous heat kernel $P_{t_i-s_i}$. 
So the first convergence  follows from Theorem~\ref{t:converg-lin} (convergence of $Z_\ga(t)$), continuity of the mapping  $P_{t_i-s_i}$ and  the continuous mapping theorem.

Regarding the second convergence,
recall the explicit expression \eqref{e:QuadrVarR} for the quadratic variation
$\langle R_{\ga,t_i,\nn}(\cdot,x)\rangle_{s_i} $. The constant $\co^2$ is deterministically close to $1$ by \eqref{e:const-c-gamma},
and therefore Proposition~\ref{prop:averageA} shows that
the quadratic variation $\langle R_{\ga,t_i,\nn}(\cdot,x)\rangle_{s_i}$ is given by
$$
\frac{2}{\beta_c} \int_0^{s_i}    \sum_{z \in \Le} \eg^2 \,  \big( \Pg{t_i-r} \ae \Kg \big)^2(z-x) \, dr
$$
up to an error $\tilde E_t(s)$ which satisfies $\E \| \tilde E_t(s)\|_{L^p(\T^2)}^p  \to 0 $. 
This expression in turn converges to the limiting object $\langle R_{t_i}(\cdot,\cdot)\rangle_{s_i}$ by the calculation as in \cite[(6.14)]{MourratWeber}.
\end{proof}

We now summarize the results obtained above and prove our main result, Theorem~\ref{thm:Main}. 


To show the convergence 
of discrete evolution \eqref{e:mildBEG6}  to the solution of 
\begin{equs}  [e:mildPhi4]
X(t,\cdot) =P_t X^0 &+ \int_0^t P_{t-s}   \star  
	 \Big( \mathfrak a_1 X(s,\cdot) 
	 -\frac{a_c(4a_c-1) \beta_c^3 }{3(2a_c+1)^2}  X^{:3:} (s,\cdot)
	 \Big) \, ds \\ & + Z(t, \cdot) 
  \qquad \mbox{on } \T^2
\end{equs}
in the first regime and 
\begin{equs}  [e:mildPhi6]
X(t,\cdot) =&P_t X^0 + \int_0^t P_{t-s}   \star  
	 \Big( -\frac{20}{9}X^{: 5:}  (s,\cdot) + \mathfrak{a}_3  X^{: 3:}  (s,\cdot) \\
+ & \mathfrak{a}_1 X(s,\cdot) \Big) \, ds  + Z(t, \cdot) 
  \qquad \mbox{on } \T^2
\end{equs}
in the second regime,
we need to control the following  error terms.

(1) The error $E_\gamma$ in \eqref{e:mildBEG6} arising from the Taylor expansion in Section~\ref{sec:model-formal}.

(2) In the second regime the discrepancies caused by $C_{\beta,\theta} \neq -\frac{20}{9}$,
  the coefficient in front of $X_\gamma^3 -3\CGG X_\gamma$
 in \eqref{e:reorg-Hermite}
is not exactly $\mathfrak a_3 $, and  the coefficient in front of 
$X_\gamma$ in \eqref{e:reorg-Hermite} is not exactly $\mathfrak a_1 $;
similarly in the first regime there are also such discrepancies of coefficients comparing with \eqref{e:reorg-Hermite-1}.

(3) The operator $\Ex$ which extends a function on $\Lambda_\eps$ to a function on $\T^2$
defined in  \eqref{e:Extension} does not commute with powers. As in \cite{MourratWeber} this is dealt with by decomposing the field $\Xg$ into a ``high" and a ``low" frequency part
\begin{equ} [e:defXlh]
 \Xg^{\mathrm{low}} \eqdef \sum_{2^k < \frac{N}{20}} \delta_k \Xg \;,
 \qquad
 \Xg^{\mathrm{high}} \eqdef \sum_{2^k \geq \frac{N}{20}} \delta_k \Xg \;,
 \end{equ}
where we refer to \cite[(A.7)]{MourratWeber} for the precise definition of the operator $\dk$
(we recall that $N\approx \ga^{-2}$ in the first regime and $N\approx \ga^{-3}$ in the second regime).  For $\Xg^{\mathrm{low}}$ the operator $\Ex$ does commute with the powers appearing below and we need to control the error caused by the high frequencies. 

(4) Recall that in the discussion on the limiting SPDE, the actual renormalization constant used to define the Wick powers $Z_\eps^{: n :}$ in \eqref{e:def-Wick-eps}
is a time-dependent constant $\mathfrak c_\eps(t)$, and the time-dependent coefficients $\mathfrak a_k(t)$ is introduced in place of the time-independent ones $\mathfrak a_k$ in order to take care of the difference between $\mathfrak c_\eps(t)$ and $\mathfrak c_\eps$, i.e. to
guarantee that \eqref{e:a-and-at} holds. For the discrete model, we have $\mathfrak c_\gamma \neq \mathfrak c_\eps$, and  we will introduce the approximate time-dependent renormalization constant 
\begin{equ} [e:def-cga-s]
\tCG(s,x) \eqdef [ R_{\ga,s}(\cdot,x)]_{ s}
\end{equ}
 (and extend this to all $x \in \T^2$ as a trigonometric polynomial). So we need to control the error caused by the fact that  Eq.~\eqref{e:a-and-at} does not exactly hold
 anymore if the subscript $\eps$ in \eqref{e:a-and-at} is replaced by $\gamma$.

(5) The error from $P_t^\gamma X_\gamma^0 \neq P_t X^0$.

(6) The processes $\Zgn^{: n:}$ are defined via iterated integrals, which are not exactly the same as Hermite polynomials with constant $\tCG(s,x)$ (see Prop.~\ref{p:discrete-Wick}).

(7) $\Delta \neq \widetilde{\Delta}_\gamma$.

In the following Lemma we control the errors from (1)-(4). We will frequently use the fact that
an $L^\infty (\Lambda_\eps)$ bound on $X_\gamma$ can be extended to an $L^\infty (\T^2)$ bound by loosing an arbitrarily small power of $\gamma$ (\cite[Lemma~B.6]{MourratWeber}), 
and the fact that the $L^\infty$ norm can be bounded by the $\mathcal C^{-\nu}$ norm of $X_\gamma$ multiplied by a factor $\ga^{-b\nu}$ (\cite[Lemma~B.3]{MourratWeber})
if $\hat X_\gamma$ has vanishing frequency larger than $\ga^{-b}$ ($b=2,3$ depending on the regime).

Before stating the lemma, we recall that the constant $\Ce$ is defined in \eqref{e:norm-constant}, the constant $\Ce(t)$  is defined in  \eqref{e:coft},
the constant $\CGG$ is defined in \eqref{e:valueCGG},
the constant $\tCG(t,\cdot)$ is defined in \eqref{e:def-cga-s},
the constant $\mathfrak a_1$ (resp. $\mathfrak a_1$  and $\mathfrak a_3$)  are introduced 
in \eqref{e:Phi4-beta-a} (resp. \eqref{e:tune-tri}) in the first (resp. second) regime. The constants $\mathfrak a_k^{(\eps)}(t)$ are defined in \eqref{e:a-and-at}, and here we will use the $\eps\to 0$ limits of them:
in the second regime, by
 \eqref{e:a3t-a3} and  \eqref{e:a1t-a1} with $\mathfrak a_5$ substituted by $-\frac{9}{20}$ we define $\mathfrak a_1(s)$, $\mathfrak a_3(s)$ as $\eps\to 0$ limits of $\mathfrak a_1^{(\eps)}(s)$, $\mathfrak a_3^{(\eps)}(s)$, namely
 \begin{equ}  [e:a3a3a1a1]
\mathfrak a_3 (s) - \mathfrak a_3 = -\frac92 \bar{\mathfrak c} (s)  \;,\qquad
\mathfrak a_1 (s) - \mathfrak a_1  = 3\mathfrak a_3 \bar{\mathfrak  c}(s)  
	-\frac{27}{4} \bar{\mathfrak c}(s)^2 \;,
\end{equ}
 where $\bar{\mathfrak  c} (s) \eqdef  \lim_{\eps \to 0} (\Ce(s) -\Ce) $  (see \eqref{e:ce-cet} for existence of this limit).
In the first regime we simply define
$\mathfrak a_1 (t)= 
3\mathfrak a_3 \bar{\mathfrak  c} (s)+\mathfrak a_1
=-\frac{a_c(4a_c-1) \beta_c^3 }{(2a_c+1)^2}\bar{\mathfrak  c} (s)+\mathfrak a_1$.

\begin{lemma} \label{lem:Error1}
For every $t \geq 0$, we have on $\T^2$  (we drop the space variables for readability)
\begin{equs} [e:evolution3XX]
\Xg(t ) =\Pg{t} \Xng + \int_0^t & \Pg{t-s}  \Kg \star   
	\Big( -\frac{a_c(4a_c-1) \beta_c^3 }{3(2a_c+1)^2} 
	  \Big( \Xg^3 (s) -3\tCG(s )\Xg(s)\Big)
\\
& +  \mathfrak a_1(s)  \Xg(s )+ \ERR{1}(s ) 
 \Big) \, ds  + \Zg(t ) \;.   
\end{equs}
in the first scaling regime and 
\begin{equs} [e:evolution3CC]
\Xg(t ) =& \Pg{t} \Xng + \int_0^t  \Pg{t-s}  \Kg \star   
	\Big( - \frac{9}{20}  \Big( \Xg^5 (s )-10\tCG(s )\Xg^3(s)+15\tCG(s )^2\Xg(s)\Big) \\
	 &+ \mathfrak a_3(s)  \Big( \Xg^3 (s) -3\tCG(s )\Xg(s)\Big)
	 +  \mathfrak a_1(s)  \Xg(s )
+ \ERR{1}(s ) 
\Big) \, ds  + \Zg(t ) 
\end{equs}
in the second scaling regime, such that the following holds.
For every $ T>0$ and $\ka>0$, there exists $C = C(T,\ka,\al)$ such that for all $0 \leq s \leq T$, $x\in\T^2$ and sufficiently small $ \ga >0$
\begin{equs} [e:Final_Error1]
|\ERR{1}(s, x)|
 & \leq \; C \; \ga^{-30 \al - \ka}
 	\big(  \| \Xg(s, \cdot) \|_{\Ca}^7 +1 \big)  \\
\times & \Big(  \ga^{\frac23} s^{-\frac13}   
	+ \| \Xg^{\mathrm{high}}(s, \cdot)  \|_{L^{\infty}(\T^2)} 
	+\|  Q_{\ga,s}(s,\cdot) \|_{L^\infty(\Le)} 
	+ |\tilde E(s,x)|   \Big) \;,
\end{equs}
where $\tilde E$ is defined in \eqref{e:averageA}.
Here $\ERR{1}$  is different in the two regimes but the bound holds for both regimes.
\end{lemma}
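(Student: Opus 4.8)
The plan is to start from the mild identity \eqref{e:mildBEG6} on $\Lambda_\eps$ and to rewrite its polynomial drift by the Hermite-type reorganisation of \eqref{e:reorg-Hermite-1} (first regime) resp.\ \eqref{e:reorg-Hermite} (second regime), but with the deterministic constant $\CGG$ replaced everywhere by the random space-time field $\tCG(s,x)=[R_{\ga,s}(\cdot,x)]_s$ of \eqref{e:def-cga-s} and with the constants $\mathfrak a_k$ replaced by the time-dependent $\mathfrak a_k(s)$ of \eqref{e:a3a3a1a1}, and then to extend from $\Lambda_\eps$ to $\T^2$. By definition $\ERR{1}$ is then everything that does not match the drift of \eqref{e:evolution3XX} resp.\ \eqref{e:evolution3CC}, and it decomposes into four groups: (i) the Taylor remainder $\Kg\star E_\gamma$ from \eqref{e:def-Eg}, which carries a coefficient $\le\delta^6/\alpha\in\{\gamma^4,\gamma^2\}$ and is of degree $7$ in $\Xg$; (ii) the mismatch of the true coefficients $A_{\beta,\theta}/\alpha$, $B_{\beta,\theta}\delta^2/\alpha$, $C_{\beta,\theta}\delta^4/\alpha$ with their target values, controlled by the tuning relations \eqref{e:Phi4-beta-a} resp.\ \eqref{e:tune-tri-1}--\eqref{e:tune-tri-2} together with $|a-a_c|,\,|\beta-\beta_c|=O(\gamma^2)$ and $C_{\beta,\theta}=-\tfrac{9}{20}+o(\gamma)$; (iii) the error produced by passing from the $\CGG$-reorganisation to the $\tCG$-reorganisation; (iv) the commutator of the extension operator $\Ex$ with taking powers.

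The heart of the matter is group (iii). Writing $\tCG(s,x)-\CGG=\bar{\mathfrak c}(s)+\bigl(\tCG(s,x)-\CGG-\bar{\mathfrak c}(s)\bigr)$ with $\bar{\mathfrak c}(s)=\lim_{\eps\to0}(\Ce(s)-\Ce)$, the systematic part $\bar{\mathfrak c}(s)$ is, by Lemma~\ref{lem:ak-ake} and the explicit identities \eqref{e:a3a3a1a1}, exactly what converts the coefficients $\mathfrak a_k$ into $\mathfrak a_k(s)$, so it does not enter $\ERR{1}$ except through the harmless remainder left by replacing $B_{\beta,\theta}$ (resp.\ $C_{\beta,\theta}$) by its limit inside this rearrangement, which is $O\bigl(\gamma^2(\log s^{-1})^2\bigr)$. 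For the fluctuating part I would use the explicit quadratic variation \eqref{e:QuadrVarR} together with \eqref{e:Gauss3}--\eqref{e:Gauss4} to write
\[
\tCG(s,x)-\CGG-\bar{\mathfrak c}(s)=Q_{\ga,s}(s,x)+\tilde E(s,x)+\mathcal R_\gamma(s,x),
\]
where $Q_{\ga,s}(s,x)=[R_{\ga,s}(\cdot,x)]_s-\langle R_{\ga,s}(\cdot,x)\rangle_s$ is bounded by Proposition~\ref{prop:boundQ}, $\tilde E(s,x)$ is the error term $\tilde E_t(s,x)$ of Proposition~\ref{prop:averageA} evaluated at $t=s$, and $\mathcal R_\gamma(s,x)$ is the \emph{deterministic} discrepancy between $\tfrac{2}{\beta_c}\int_0^s\sum_{z\in\Le}\eg^2(\Pg{s-r}\ae\Kg)^2(z-x)\,dr$ and $\CGG+\bar{\mathfrak c}(s)$. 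The first two summands reproduce exactly the $\|Q_{\ga,s}(s,\cdot)\|_{L^\infty(\Le)}$ and the $|\tilde E(s,x)|$ terms in \eqref{e:Final_Error1}. The term $\mathcal R_\gamma$ is estimated via the kernel bounds of Section~\ref{sec:kernels}: in Fourier variables it compares $\hKg$ and the discrete semigroup with their continuum counterparts, the discrepancy being concentrated at high frequencies which are damped only after the microscopic diffusive time $\gamma^b$; interpolating between a $\log(\gamma^{-1})$-size deterministic bound valid for all $s$ and the genuine smallness attained for $s\gg\gamma^2$ yields $|\mathcal R_\gamma(s,x)|\le C\gamma^{2/3}s^{-1/3}$ (any positive power of $\gamma^2/s$ would do).

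Groups (i), (ii), (iv) are comparatively soft. Group (iv) is handled exactly as in \cite{MourratWeber}: decomposing $\Xg=\Xg^{\mathrm{low}}+\Xg^{\mathrm{high}}$ as in \eqref{e:defXlh}, the operator $\Ex$ commutes with every power of degree $\le 5$ on $\Xg^{\mathrm{low}}$ (since $5N/20<N$), and the error from replacing $\Xg$ by $\Xg^{\mathrm{low}}$ inside each nonlinearity is bounded by $\|\Xg^{\mathrm{high}}\|_{L^\infty(\T^2)}$ times lower powers of $\|\Xg\|_{L^\infty}$, giving the $\|\Xg^{\mathrm{high}}\|_{L^\infty(\T^2)}$ term in \eqref{e:Final_Error1}; groups (i) and (ii) simply carry one of the small deterministic factors $\delta^6/\alpha$ or the tuning gaps. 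To assemble \eqref{e:Final_Error1} one multiplies each deterministic smallness factor by a polynomial in $\|\Xg(s)\|_{L^\infty(\T^2)}$ of degree at most $7$ — the degree $7$ occurring only through $E_\gamma$, the $\tCG$-terms contributing degree $\le 5$ — then converts the $L^\infty$ norm into the $\Ca$ norm at the cost of a factor $\gamma^{-b\al}$ per power (legitimate because $\hat\Xg$ carries no frequency above $\gamma^{-b}$, by \cite[Lemma~B.3]{MourratWeber}) and converts from $\Lambda_\eps$ to $\T^2$ at the cost of a further $\gamma^{-\ka}$ (by \cite[Lemma~B.6]{MourratWeber}); absorbing the logarithms into $\gamma^{-\ka}$ and taking $\al$ small yields the prefactor $\gamma^{-30\al-\ka}(\|\Xg(s)\|_{\Ca}^7+1)$, uniformly over the two regimes.

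The main obstacle is the bookkeeping in group (iii): checking that, once the systematic part of $\tCG-\CGG$ has been absorbed into the time-dependent coefficients by means of the exact Hermite identities, what remains is \emph{precisely} $Q_{\ga,s}+\tilde E$ plus the deterministic kernel discrepancy $\mathcal R_\gamma$, and then establishing the $\gamma^{2/3}s^{-1/3}$ bound on $\mathcal R_\gamma$ from the estimates of Section~\ref{sec:kernels}. This is where the second scaling regime is genuinely more delicate than \cite{MourratWeber}: the quintic Hermite polynomial forces $\tCG^2$, and hence $\CGG\sim\log\gamma^{-1}$, to multiply the renormalisation discrepancy, so the loss of logarithms has to be tracked carefully, and the fifth-order identities \eqref{e:a3a3a1a1} must be matched against the discrete rearrangement term by term.
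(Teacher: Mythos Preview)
Your proposal is correct and follows essentially the same route as the paper: the paper's decomposition $\ERR{1}=\err{1}+\err{2}+\err{3}$ corresponds to your groups (i)$+$part of (ii), (iv), and (iii) respectively, with a small extra $\err{4}$ coming from $c_{\gamma,2}^2\neq 1$ in \eqref{e:QuadrVarR} that you have implicitly folded into $\mathcal R_\gamma$.

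One point is worth making explicit, since you flag it as the main obstacle without saying how to resolve it. In the second regime the coefficient of $\Xg$ in $\err{3}$ involves the difference $\tfrac{27}{4}\CGG^2-\tfrac{27}{4}\tCG(s)^2$, and a naive bound would pick up an extra factor $\CGG+\tCG(s)\sim\log\gamma^{-1}$ multiplying the discrepancy. The paper deals with this not by a separate estimate but by an exact algebraic factorisation: after inserting \eqref{e:a3a3a1a1} one finds
\[
\Big|\tfrac{27}{4}\CGG^2+3\mathfrak a_3\CGG-\mathfrak a_1-\tfrac{27}{4}\tCG(s)^2-3\mathfrak a_3(s)\tCG(s)+\mathfrak a_1(s)\Big|
=\tfrac{27}{4}\Big|\tCG(s)+\CGG-\bar{\mathfrak c}(s)+\tfrac{4}{9}\mathfrak a_3\Big|\cdot\Big|\CGG-\tCG(s)+\bar{\mathfrak c}(s)\Big|,
\]
so the logarithmically large factor sits in front of exactly the same small quantity $\CGG-\tCG(s)+\bar{\mathfrak c}(s)$ that you have already decomposed as $Q_{\ga,s}+\tilde E+\mathcal R_\gamma$, and is absorbed into the $\gamma^{-\kappa}$. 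This is the concrete mechanism behind the ``term by term matching'' you allude to, and once you write it down the rest of your bookkeeping goes through as described.
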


\begin{remark}
Recall the stopping time $\taun $ defined in \eqref{e:deftaug}. 
Denote by $X_{\ga,\nn}$ the solution to \eqref{e:evolution3CC} with $Z_\ga$ replaced by $Z_{\ga,\nn}$ 
and $\ERR{1}$ replaced by $\ERR{1}_{\nn}$  which is equal to $\ERR{1}$ before the time $\taun $ and is set to $0$ after $\taun $.
Taking the $L^p(\T^2)$ norm on both sides of \eqref{e:Final_Error1},
one has the bound
\begin{equs} [e:Final_Error1a]
 \|   \ERR{1}_{\nn} (s,\cdot)\|_{L^p(\T^2)}  
  \leq  & C \ga^{-(30 \al + \ka)}
\Big(  \ga^{\frac{2}{3}} s^{-\frac{1}{3}}   
+ \| \Xg^{\mathrm{high}} (s,\cdot)  \|_{L^{\infty}(\T^2)} \\
&+  \|  Q_{\ga,s}(s,\cdot) \|_{L^\infty(\Le)}  
+ \|\tilde E(s,\cdot)\|_{L^p(\T^2 )}  \Big) \;,
\end{equs}
where $C$ depends on $T,\nn,p,\ka,\nu$.
\end{remark}

\begin{proof}[of Lemma~\ref{lem:Error1}]
We first consider the second regime.
With the choice of parameters as in \eqref{e:tune-tri}, or equivalently \eqref{e:tune-tri-1} and \eqref{e:tune-tri-2}, the  discrete evolution  \eqref{e:mildBEG6}
can be written as
\begin{equs}  
\Xg(t,\cdot) =&\Pg{t} \Xng + \int_0^t \Pg{t-s}  \Kg \star  
	 \Big( C_{\beta,\theta}\Xg^5  (s,\cdot) +\Big(\frac92 \CGG + \mathfrak a_3\Big) \Xg^3  (s,\cdot) \\
+ & \Big( -3\CGG \mathfrak{a}_3 - \frac{27}{4} \CGG^2+ \mathfrak{a}_1 \Big) \Xg(s,\cdot)
 + E_\gamma(s,\cdot) \Big) \, ds  + Z_\gamma (s,\cdot)  
 \qquad \mbox{on } \Lambda_\eps.
\end{equs}
We apply $\Ex$ on both sides, and  
compare it with the continuous equation \eqref{e:evolution3CC}. We then have
\begin{equ} [e:many-errors]
\ERR{1} 
= \err{1} + \err{2} + \err{3}\;,
\end{equ}
where the error terms are given by
\begin{align*}
\err{1}(s ) &= \Eg(s ) +  \Big(C_{\beta,\theta} +\frac{9}{20} \Big)    \Ex (\Xg^5(s )) \;, \\
\err{2}(s )& 
	= - \frac{9}{20} \Big(  \Ex \; \big( \Xg^5(s ) \big) - (\Ex \; \Xg(s ))^5 \Big) \\
	&\quad +\Big(\frac92 \CGG + \mathfrak a_3 \Big) \Big(  \Ex \; \big( \Xg^3(s ) \big) - (\Ex \; \Xg(s ))^3 \Big)
	\;,\\
\err{3}(s )  &=   \Big(\frac92 \CGG +\mathfrak a_3 - \frac92 \tCG(s ) - \mathfrak a_3 (s) \Big) \;  \Xg^3(s)  \\
	& -   \Big(\frac{27}{4} \CGG^2 +3\mathfrak a_3\CGG -\mathfrak a_1
		-\frac{27}{4} \CGG(s)^2 -3\mathfrak a_3(s)\CGG(s) +\mathfrak a_1(s)\Big) \;  \Xg(s)\;, 
\end{align*}
where in the expression of $\err{3}$ and also below we simply denote $X_\gamma = \Ex X_\gamma$.
The analysis for $\err{1}$ and $\err{2}$ follow essentially the same way as in \cite[Proof of Lemma~7.1]{MourratWeber}, so we will only write down the bounds we eventually obtain for these errors.

For the first term $\err{1}$,
using the assumption \eqref{e:tune-tri}  on $(\be,\theta)$, and
the definition of $C_{\beta,\theta}$,
one has $|C_{\beta,\theta} +\frac{9}{20}| \le C\gamma^2 \CGG$.
Then by  the definition of $\Eg$ in \eqref{e:def-Eg}, and that $\CGG$ has only logarithmic divergence, we  can finally get that
for any arbitrary small $\ka >0$
\begin{align}
\big\| \err{1} (s,\cdot) \big\|_{L^\infty(\T^2)} &\leq  C(\ka,\al) \ga^{2-\ka - 30 \al}  \big( \|\Xg(s,\cdot) \|_{\Ca}^7  + 1 \big)\;.  \notag
\end{align}

For the second term $\err{2}$,  by decomposing $X_\gamma$
into low and high modes as in \eqref{e:defXlh}, we can obtain  the bound
\begin{align}
\| \err{2}(s, \cdot) \|_{L^\infty(\T^2)} 
\leq C(\ka) \ga^{-\ka- 15\al} \| \Xg^{\mathrm{high}}(s, \cdot)  \|_{L^{\infty}(\T^2)} 
 \| \Xg(s, \cdot) \|_{\Ca}^4  \;.
\end{align}

In order to control the  term $\err{3}$, 
we first consider the quantity
\begin{equ} [e:cccc]
\CGG -\CGG(s,x) + \lim_{\eps\to 0} (\Ce(s) - \Ce) \;,
\end{equ}
which is called $\CGG -\CGG(s,x) +A-A(s)$ in \cite[Proof of Lemma~7.1]{MourratWeber} (see the definition of $A_\eps(s)$ below \cite[(3.11)]{MourratWeber}); note that the $\eps\to 0$ limit is well-defined as discussed around \eqref{e:ce-cet} in the proof of Lemma~\ref{lem:ak-ake}.
By the definition  of $\CGG$   in \eqref{e:valueCGG},
the definition of $\CGG(s,x)$ in \eqref{e:def-cga-s}, and \eqref{e:ce-cet},
we have that for $x \in \T^2$, \eqref{e:cccc} is equal to
\begin{equ}
 \sum_{\substack{\om \in \{-N, \ldots, N \}^2\\ \om \neq 0 }} 
 	\frac{|\hKg(\om)|^2}{4 \beta_c \ga^{-b}  (1 - \hKg(\om))} 
 - [ R_{\ga,s}(\cdot,x)]_{ s}
 + \frac{s}{2\beta_c} 
 -  \sum_{\substack{\om \in \Z^2  \\ \om \neq 0} }  
 	\frac{\exp(-2s \pi^2|\om|^2 )}{4 \beta_c \pi^2 |\om|^2} \;.
\end{equ}
Here $b=4$ and $\beta_c =3$ since we are considering the second regime. 
Recall from \eqref{e:def:Qg} that for $x\in \Le$,
$[ R_{\ga,r}(\cdot,x)]_{ r}  = \langle R_{\ga,r}(\cdot,x) \rangle_r\ + Q_{\ga,r}(s,x)$.
According to \eqref{e:QuadrVarR} we get for $x\in \Le$
\begin{equs}
\langle& R_{\ga,s}(\cdot, x) \rangle_s \\
 &= \co^2 \int_0^s \sum_{z \in \Le} \eps^2  \big(\Pg{s-r} \ae \Kg \big)^2 (x-z) \!\!\!\sum_{\bar{\sigma} \in \{ \pm 1,0\} } (\bar{\sigma} - \sigma(\alpha^{-1}r,\eps^{-1} z))^2 \Cgn\big(r,  z, \bar{\sigma}) \, dr   \\
&=  \frac{2}{\beta_c}  \int_0^s \sum_{z \in \Le} \eps^2  \big(\Pg{s-r} \ae \Kg \big)^2 (x-z) \;  \, dr  +\err{4}(s,x) + \tilde E_s(s,x) \\
&= \frac{1}{2\beta_c} \int_0^s  \!\!\! 
	\sum_{\om \in \{-N, \ldots, N\}^2}
	\exp\Big(-\frac{2 r}{\ga^b}\big(1 -\hKg(\om)\big) \Big)  \,  \big|  \hKg (\om)\big|^2\, dr +\err{4}(s,x) + \tilde E_s(s,x)\\
&=   \frac{s}{2\beta_c}
	+  \!\!\!\!\!\! \sum_{\substack{\om \in \{-N, \ldots, N\}^2\\\om \neq 0}}  \frac{\big|  \hKg (\om)\big|^2 }{4 \beta_c \ga^{-b}\big(1 -\hKg(\om)\big)} 
		\Big( 1- e^{-\frac{2 s}{\ga^b}\Ll(1 -\hKg(\om)\Rr) } \Big)    +\err{4}(s,x) + \tilde E_s(s,x)
\end{equs}
where $\err{4}$ is the error that arises by replacing $\co^2$ in the second line by $1$, and
 $\tilde E$ is defined in \eqref{e:averageA}.
By $|\co^2 -1|\le \ga^2$ and $
\int_0^s   \sum_{z \in \Le}  \eg^2 \,  \big( \Pg{t-r} \ae \Kg \big)^2(z-x)
 	   \, dr 
	 \leq C \log \gamma^{-1} \le C(\ka) \ga^{-\ka}$ one has 
$|\err{4}(s,x)|\le C\ga^{2-\ka}$. 
Proposition~\ref{prop:averageA} gives the stochastic bound on 
$\tilde E_s(s,x)$.
	 

Therefore up to  the terms $Q_{\ga,s}(s,x)$, $ \err{4}(s,x) $ and 
$\tilde E_s(s,x)$,  the quantity
\eqref{e:cccc} is equal to
\begin{equ} \label{e:ugly0}
 \sum_{\substack{\om \in \{-N, \ldots, N\}^2 \\ \om \neq 0}}  
  \frac{\big|  \hKg (\om)\big|^2 }{4 \beta_c \ga^{-b} \big(1 -\hKg(\om)\big)}   e^{- \frac{2 s }{\ga^b }\Ll(1 -\hKg(\om)\Rr)} 
 -  \sum_{\substack{\om \in \Z^2  \\ \om \neq 0} }  
 \frac{\exp(-2s \pi^2|\om|^2 )}{4\beta_c \pi^2 |\om|^2} \;.
\end{equ}
We bound the sums over $|\om| < \ga^{-2}$ and $|\om| \ge \ga^{-2}$  separately. In the
case $|\om| < \ga^{-2}$ we use the fact that according to Lemma ~\ref{le:Kg0} $\gamma^{-4}(1-\hKg(\om))$ 
approximates $\pi^2|\om|^2$ up to an error $\leq C\gamma^2 |\om|^3$ (which implies 
in particular that $\hKg(\om)$ approximates $1$ up to an error $\leq C \gamma^{4}|\om|^2$).
For  $|\om| \ge \ga^{-2}$ we treat the two sums separately and use Lemma~\ref{le:Kg}
which yields in particular the upper bound $|\hKg(\om)| \leq C |\gamma^2 \om |^{-2}$
as well as the lower bound $1 - \Kg(\om) \geq 1$. After some calculations (the details of 
which are as in \cite[Equation (7.7) ]{MourratWeber}) we conclude that  \eqref{e:ugly0}
is bounded by $C\ga^{\frac23} s^{-\frac{1}{3}}$.

%
%
%
%

Now to really bound the coefficients appearing in  $ \err{3}(s,x) $, 
note that the coefficient of $ \Xg^3(s) $ in $ \err{3}(s,x) $ can be expressed as
\[
 \frac92 \CGG +\mathfrak a_3 - \frac92 \tCG(s ) - \mathfrak a_3 (s) 
 =\frac92 (\CGG -\tCG(s ) + \bar{\mathfrak c}(s))
 \]
 which is exactly the quantity \eqref{e:cccc} we have bounded times $\frac92$.
Furthermore, the absolute value of the coefficient of $ \Xg (s) $ in $ \err{3}(s,x) $ is 
\begin{equs}
\Big| \frac{27}{4} \CGG^2 &+3\mathfrak a_3\CGG -\mathfrak a_1
		-\frac{27}{4} \CGG(s)^2 -3\mathfrak a_3(s)\CGG(s) +\mathfrak a_1(s) \Big| \\
&= \Big| \frac{27}{4} \Big( \CGG^2  - \CGG(s)^2 \Big) 
	+3 \Big(\mathfrak a_3 \CGG  +\frac92 \bar{\mathfrak c} (s) \tCG(s) -\mathfrak a_3 \CGG(s) \Big) + 3\mathfrak a_3 \bar{\mathfrak  c}(s)  
	-\frac{27}{4} \bar{\mathfrak c}(s)^2 \Big| \\
& =\frac{27}{4} \Big| \CGG(s)+\CGG -\bar{\mathfrak  c}(s)  +\frac49 \mathfrak a_3 \Big| \cdot
	\Big|\CGG - \CGG(s)+\bar{\mathfrak  c}(s) \Big| \\
&\le C(\ka) \ga^{-\kappa} \big| \CGG - \CGG(s)+\bar{\mathfrak  c}(s) \big|
\end{equs}
where in the second line we applied \eqref{e:a3a3a1a1},   the third line is obtained by elementary factorization, and in the last line we  have used that each term
in  $ \CGG(s)+\CGG -\bar{\mathfrak  c}(s)  +\frac49 \mathfrak a_3$ is bounded by $\leq C \log \gamma^{-1}$ uniformly in $s$.
So the bound of this coefficient again boils down to the bound on \eqref{e:cccc}.

%

The $\tilde E$ dependent terms in $\err{3}$ are
\begin{equ} [e:def-Errstar]
-\frac92 \tilde E_s(s,x) X_\ga^3(s,x) + 
\frac{27}{4} \Big( \CGG(s)+\CGG -\bar{\mathfrak  c}(s)  +\frac49 \mathfrak a_3 \Big)\tilde E_s(s,x) X_\ga(s,x)
\end{equ}
%
whose absolute value is bounded by
\[
C(\nu,\ka) \ga^{-10\nu-\ka} \Big( \|X_\ga(s,\cdot)\|_{\mathcal C^{-\nu}}^3+1 \Big) |\tilde E_s(s,x)|  \;.
\]
Summarizing all the above bounds we obtain \eqref{e:Final_Error1}.

The proof for the first regime is analogous and is thus omitted; in particular we can obtain bounds with slightly larger (but still negative) powers of $\ga$ and lower powers of $ \| \Xg(s, \cdot) \|_{\Ca}$ than that in \eqref{e:Final_Error1} 
but the latter is sufficient for our purpose.
\end{proof}

The error (5) is bounded by \cite[Lemma~7.3]{MourratWeber} as
\begin{equ} [e:error5]
\sup_{0 \leq t \leq T} \|  P_t^\gamma X_\gamma^0 - P_t X^0  \|_{\Ca} 
 \leq C \| \Xn - \Xng \|_{\Ca} + \bar C \ga^{\frac{\ka}{2}}
\to 0
\end{equ} 
for every $T>0$, where $\bar C$ depends on $\al, \ka,T$ and $\| \Xng \|_{\Cc^{-\al + \ka}}$.

In the sequel, we let $\bar n=3$ in the first regime and  $\bar n=5$ in the second regime.

At this stage, note that if we define
\begin{equ} [e:defXbar]
\oX(t, \cdot) \eqdef 
	P_{t}\Xn +  \Zgn(t, \cdot)   
	+ \Ss_T(\Zgn, \ZGn{2},\cdots,\ZGn{\bar n})(t, \cdot)\;,
\end{equ}
where $\Ss_T$ is the solution map defined in Theorem~\ref{thm:ContSolution},
then by the convergence in law of 
$(\Zgn, \ZGn{2}, \cdots ,\ZGn{\bar n})$  with respect to the topology of $L^\infty([0,T],\Cc^{-\al})^{\bar n}$ 
to $(Z, Z^{: 2 :},\cdots,Z^{: \bar n:})$,
and by the continuity of the map  $\Ss_T$ as stated in Theorem~\ref{thm:ContSolution}, one has that $\oX$ converges in law to $X$. 

Therefore, it remains to compare $\oX$ and $\tXgn$. The idea is to follow a discrete version of Da Prato-Debussche argument \cite{dPD}, namely, setting
\begin{equs} [e:Defvgm]
\tvg(t,x)  &\eqdef \tXgn(t,x)  -  \Zgn(t,x) - P_t^\gamma X_\gamma^0 (t,x) \qquad   x \in \T^2 \;, \\
\bvg(t, x) &\eqdef \oX(t, x)  -  \Zgn(t, x) -  P_t X^0(t, x)   \qquad  x \in \T^2    \;,
\end{equs}
and we compare $\tvg$ and $\bvg$. 
Define
\begin{equ} [e:tZkbZk]
\ttZ^{: k:} \eqdef \sum_{\ell=0}^k  
	 (P_t^\gamma X_\gamma^0 )^{k-\ell}  \Zgn^{:\ell:} \;,
\qquad
\bZZg^{: k:} \eqdef \sum_{\ell=0}^k  
	 (P_t X^0 )^{k-\ell}  \Zgn^{:\ell:} \;.
 \end{equ}
Note that if the above Wick powers were defined via Hermite polynomials rather than iterated integrals then the above identities would follow from basic properties of Hermite polynomials $H_k(x+y) = \sum_{\ell=0}^k  x^\ell H_{k-\ell}(y)$.

Now it is straightforward to check that $\bvg $ satisfies
\begin{align} \label{e:evolution3aa}
 \bvg(t)  = - \int_0^t P_{t-s}   \bPsg( s) \, ds\; , 
\end{align}
where we have set 
\begin{equ} [e:Def_Psi_bar-1]
\bPsg(s)   \eqdef   \frac{a_c(4a_c-1) \beta_c^3 }{3(2a_c+1)^2}   \sum_{k=0}^3 {3 \choose k} \bZZg^{: k:}(s)\,\bvg^{3-k}(s) 
- \mathfrak a_1(s)\,\big(\bvg(s) + \bZZg(s) \big) 
\end{equ}
in the first regime and
\begin{equs} [e:Def_Psi_bar]
\bPsg(s) &  \eqdef     \frac{9}{20} \sum_{k=0}^5 {5 \choose k} \bZZg^{: k:}(s)\,\bvg^{5-k}(s) 
-\mathfrak a_3(s) \sum_{k=0}^3 {3 \choose k} \bZZg^{: k:}(s)\,\bvg^{3-k}(s) \\
& \qquad
- \mathfrak a_1(s)\,\big(\bvg(s) + \bZZg(s) \big) \;.
\end{equs}
in the second regime.
On the other hand, by Lemma~\ref{lem:Error1} and \eqref{e:Defvgm}, $\tvg$ satisfies (on $\T^2$)
\begin{align} \label{e:evolution3b} 
\tvg(t) =&  - \int_0^t \Pg{t-s}  \Kg \star   \big( \Psg(s)
+ \ERR{1}_{\nn}   
+\ERR{2}_{\nn}(s,\cdot) \big) \, ds \;,   
\end{align}
where
$\Psg(s)$ is defined in the same way as \eqref{e:Def_Psi_bar-1} or \eqref{e:Def_Psi_bar} with $\bZZg^{: k:}, \bZZg$ 
replaced by $\ttZ^{: k:}, \ttZ$
and $\bvg$ replaced by $\tvg$.
Here the term $ \ERR{1}_{\nn}$ was estimated in Lemma~\ref{lem:Error1},
and $ \ERR{2}_{\nn}$  controls the error (6) i.e. the fact that the iterated integrals 
do not exactly coincide with Hermite Polynomials. In fact, the difference between 
Hermite polynomials and iterated integrals was already bounded  in Lemma~\ref{p:discrete-Wick}.
Relying on these bounds and using \eqref{e:tZkbZk} it is straightforward 
(see \cite[Lemma~7.4]{MourratWeber} for the analogous details in the Kac-Ising case)
to check  that 
in both regimes one has for $0 \leq s \leq T$ 
\begin{equ} [e:e2Bound]
\| \ERR{2}_{\nn}(s, \cdot) \|_{L^\infty(\T^2)} 
 \leq C(T, \al, \ka) \Big( 1+s^{-3\nu-\kappa} + \|\tvg\|_{\Cc^{\frac12}}^4\Big)
 \sum_{k=2}^5 \| E_{\ga,s,\nn}^{: k : }(s,\cdot)\|_{L^\infty(\T^2)}  
\end{equ}
where $\EG{n}(s,x)$
was introduced in Proposition~\ref{p:discrete-Wick}.
The following estimate holds in both regimes.

\begin{lemma} \label{lem:preGronwall}
 For every $0 \leq t \leq T$ and sufficiently small $ \ga >0$, we have
\begin{align}
\|  \bvg(t, \cdot)-  \tvg(t, \cdot)  \big\|_{\Cc^{\frac12}} \leq  & \overline{C}_1  \int_0^t  (t-s)^{-\frac{1}{3}} s^{-\frac{1}{6} }\|  \bvg(s,\cdot) - \tvg(s,\cdot) \|_{\Cc^{\frac12}}   \, ds \notag\\
&+ \overline{C}_1 (\ga^{\frac{\ka}{2}}  + \| \Xng - \Xn \|_{\Ca}) + \ERR{3}(t)  \label{e:Gronwall1} \;,
\end{align}
where the constant $\overline{C}_1$ depends on $\nu,\ka,T$, $\| \Xn \|_{\Cc^{- \al + \ka}}$, $\| \Xng \|_{\Cc^{-\al + \ka}}$ as well  as 
the random quantities 
$\sup_{0 \leq s \leq T} \| \bvg(s, \cdot) \|_{\Cc^{\frac12}}$,   
$\sup_{0 \leq s \leq T}\|\tvg(s, \cdot) \|_{\Cc^{\frac12}}$,  
and  
\[
\sup_{0 \leq s \leq T} \| \ZGn{k}(s, \cdot) \|_{\Ca} \quad \mbox{for } k=1,\dots, \bar n \;.
\]

There exists some $p \geq 2$, such that the error term $\ERR{3}$ satisfies that for every $T \geq 0$ and $0 <\la\le \frac{1}{2}$
\begin{align}\label{e:very_good_bound_A}
\E \sup_{0 \leq t \leq T}  \big| \ERR{3}(t) \big|^p \leq \overline{C}_2 \ga^{\la } \;,
\end{align}
for a constant $\overline{C}_2= \overline{C}_2(p,T,\la)$.
\end{lemma}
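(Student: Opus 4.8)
The plan is to estimate the difference $\bvg(t) - \tvg(t)$ by subtracting the two mild formulations \eqref{e:evolution3aa} and \eqref{e:evolution3b}. Writing
\[
\bvg(t) - \tvg(t) = -\int_0^t P_{t-s}\big(\bPsg(s) - \Psg(s)\big)\,ds
 + \int_0^t\big(P_{t-s} - \Pg{t-s}\Kg\star\big)\Psg(s)\,ds
 + \int_0^t \Pg{t-s}\Kg\star\big(\ERR{1}_\nn(s) + \ERR{2}_\nn(s)\big)\,ds\,,
\]
the first integral will produce the Gronwall-type term: the nonlinearities $\bPsg$ and $\Psg$ are polynomials in $\bvg$ (resp.\ $\tvg$) of degree at most $\bar n = 3$ or $5$ with coefficients given by the $\bZZg^{:k:}$ (resp.\ $\ttZ^{:k:}$), so $\bPsg(s) - \Psg(s)$ splits into a piece proportional to $\bvg(s)-\tvg(s)$ (times polynomial factors in $\bvg,\tvg$ and the $\Zgn^{:k:}$, hence controlled by the random constant $\overline C_1$), plus a piece proportional to $\bZZg^{:k:}(s) - \ttZ^{:k:}(s)$. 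By \eqref{e:tZkbZk} the latter difference is a polynomial in $P_tX^0$, $\Pg{t}X_\ga^0$ and the $\Zgn^{:\ell:}$, and is therefore controlled using \eqref{e:error5} for the initial-data discrepancy. Applying $\|P_{t-s}(\cdot)\|_{\Cc^{1/2}} \le C(t-s)^{-\theta}\|\cdot\|_{\Cc^{-\al}}$ (Schauder) together with the short-time singularities $s^{-3\nu-\ka}$ coming from $\ttZ^{:k:}$ near $s=0$ produces the kernel $(t-s)^{-1/3}s^{-1/6}$ after choosing $\nu,\ka,\al$ small.

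Next I would absorb the second integral — the replacement of the continuous heat semigroup $P_{t-s}$ by the discrete one $\Pg{t-s}\Kg\star$ — into the error term $\ERR{3}(t)$. This is exactly the ``operator $\Delta \ne \widetilde\Delta_\ga$'' error (7), and it is handled as in \cite[Lemma~7.3]{MourratWeber} (and the cited \cite[Corollary~8.7]{MourratWeber}): the difference of semigroups, tested against the polynomial expression $\Psg(s)$, is bounded by a small power of $\ga$ times powers of $\|\Zgn^{:k:}\|_{\Ca}$ and $\|\tvg\|_{\Cc^{1/2}}$, which gives a stochastic $L^p$ bound of order $\ga^\la$ as required in \eqref{e:very_good_bound_A}. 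The term $\Pg{t}X_\ga^0 - P_tX^0$ contributes $\overline C_1(\ga^{\ka/2} + \|\Xng - \Xn\|_{\Ca})$ via \eqref{e:error5}.

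The third integral contains the genuinely new contributions. For $\ERR{2}_\nn$ I would use \eqref{e:e2Bound}: taking $L^\infty(\T^2)$ norms and using Proposition~\ref{p:discrete-Wick} (which bounds $\E\sup\|E^{:k:}_{\ga,s,\nn}\|^p \le C\ga^{p(1-\ka)}$) together with the integrability of $s^{-3\nu-\ka}$ against $(t-s)^{-1/3}$, this integral is bounded in stochastic $L^p$ by $\ga^{1-\ka'}$ times powers of $\|\tvg\|_{\Cc^{1/2}}$, hence goes into $\ERR{3}$. For $\ERR{1}_\nn$ the relevant estimate is \eqref{e:Final_Error1a}: after applying Schauder one must integrate $(t-s)^{-1/3}$ against each of the four terms on the right of \eqref{e:Final_Error1a}. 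The terms $\ga^{2/3}s^{-1/3}$, $\|\Xg^{\mathrm{high}}(s)\|_{L^\infty}$, $\|Q_{\ga,s}(s)\|_{L^\infty(\Le)}$ are all small in the appropriate norm uniformly up to the stopping time $\taun$ — the first is explicit, the second by Besov estimates since the high-frequency part carries a power of $\ga$, the third by Proposition~\ref{prop:boundQ} — while $\|\tilde E(s)\|_{L^p(\T^2)}$ is controlled by the bound \eqref{e:Error-stochastic-2} in Proposition~\ref{prop:averageA}. All of these assemble into $\ERR{3}(t)$ with the stochastic bound \eqref{e:very_good_bound_A}.

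The main obstacle — and the reason the whole $L^p$-based framework (rather than the $L^\infty$ framework of \cite{MourratWeber}) is forced on us — is the term involving $\tilde E$ coming from $\ERR{1}_\nn$: unlike the deterministic errors $E_\ga, E_\ga', E_\ga''$, the averaging error $\tilde E$ is only controlled probabilistically and only in $L^p$, with the somewhat weak rate $\ga^{(1-4\nu)/p}\log(\ga^{-1})^2$ after taking $p$-th roots in \eqref{e:Error-stochastic-2}. So one must be careful to track that all the negative powers of $\ga$ produced by Schauder estimates, by the $\ga^{-30\al-\ka}$ prefactor in \eqref{e:Final_Error1a}, and by the $\ga^{-b\nu}$ losses when passing between $L^\infty$ and $\Cc^{-\nu}$ norms, are beaten by the positive power $\ga^{1-4\nu}$, which holds for $\al,\ka,\nu$ chosen small enough and $p$ large enough; this bookkeeping, together with verifying that the stopping time $\taun$ makes $\|\Xg(s)\|_{\Ca}$-dependent prefactors harmless, is where the real work lies, but it is entirely routine once the structure above is in place.
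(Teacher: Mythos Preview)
Your proposal is correct and follows essentially the same route as the paper. Two small organisational differences are worth noting. First, the paper writes the decomposition with $\bPsg$ (rather than $\Psg$) in the semigroup-difference integral and with the \emph{discrete} semigroup $\Pg{t-s}\ae\Kg$ (rather than $P_{t-s}$) in the Gronwall integral; this is equivalent to your splitting up to rearrangement. Second, and more to the point, the paper does \emph{not} put the semigroup-difference term into $\ERR{3}$: since $\bPsg$ is a polynomial in $\bvg$ and the $\bZZg^{:k:}$, its $\Ca$-norm is bounded pathwise by the random constant $\overline{C}_1$, and the operator bound $\|(P_t - \Pg{t}\ae\Kg)\|_{\Ca\to\Cc^{1/2}} \le C\ga^{1/2}$ then places this contribution directly into the $\overline{C}_1\ga^{\ka/2}$ bucket. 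Your plan to absorb it into $\ERR{3}$ would require stochastic moment bounds on $\|\tvg\|_{\Cc^{1/2}}$, which are not available a priori; but since $\overline{C}_1$ is explicitly allowed to depend on $\sup_s\|\tvg(s)\|_{\Cc^{1/2}}$, you can (and should) simply reroute this term exactly as the paper does. The treatment of $\ERR{1}_\nn$ and $\ERR{2}_\nn$ --- in particular the passage through the embedding $L^p(\T^2)\hookrightarrow\Cc^{-2/p}$ to handle the $\tilde E$ contribution, yielding Schauder exponent $(t-s)^{-1/4-1/p-\bar\ka}$ --- is identified correctly.
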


\begin{proof}
Using \eqref{e:evolution3aa} - \eqref{e:evolution3b}, we get that
for any $ t \geq 0$ and $\gamma >0$, 
\begin{equs} [e:bound_diff1]
 \bvg(t, \cdot) & -  \tvg(t, \cdot)  = -\int_0^t \big(  P_{t-s} - \Pg{t-s} \star \Kg   \big)  \,\bPsg\big(  s\big) \,ds \notag  \\
&- \int_0^t \Pg{t-s}  \star \Kg \star\, \big( \bPsg( s) -  \Psg(s)    \big) \, ds\; \notag\\
&+ \int_0^t \Pg{t-s}\star  \,\Kg \star 
	\big(\ERR{1}_{\nn}(s, \cdot) 
	+\ERR{2}_{\nn}(s,\cdot) \big)   \, ds\;, 
\end{equs}
where $\bPsg( s) $ was defined in \eqref{e:Def_Psi_bar} and   $\Psg(s)$ was defined below  \eqref{e:evolution3b}.  
The rest of the proof relies on the crucial multiplicative inequality \cite[Lemma A.5]{MourratWeber} which is the linchpin around which the Da Prato-Debussche argument revolves (see \cite[Proposition~2.1]{dPD} for a similar result); it
states that 
if $\beta < 0 < \al$ with $\al + \beta >0$, then  there exists a constant $C$ depending only on $\al$ and $\beta$ such that 
\begin{equ} [e:multip-ineq]
\| Z_1 \, Z_2 \|_{\Cc^\beta} \leq C \|Z_1 \|_{\Cc^\al} \, \| Z_2\|_{\Cc^\beta}\;. 
\end{equ}
Proceeding as in the proof of \cite[Lemma~7.5]{MourratWeber}, which uses the above multiplicative inequality, together with the (discrete) heat kernel estimates in Sec.~8 of that reference,  
we can bound $\| \bPsg( s)\|_{\Ca}$ in \eqref{e:bound_diff1} in terms of $\|\bvg(s, \cdot) \|_{\Cc^\frac12}$ and $\| \bZZg^{:k:}(s, \cdot)\|_{\Ca}$ where $\al < \frac12$,
and the latter quantity is by \eqref{e:tZkbZk} further bounded in terms of $\|  \Zgn^{:k:}(s, \cdot) \|_{\Ca}$ and $\|\Xn \|_{\Ca}$.
Therefore the $\Cc^{\frac12}$ norm of the first term on the RHS of \eqref{e:bound_diff1} can be eventually bounded by
$C\ga^{\frac12}$ where $C$ may depend on all the quantities stated in the lemma,
and the small factor $\ga^{\frac12}$ arises from a bound on $\big\| \big(P_{t} -\Pg{t} \star \Kg   \big) \big\|_{\Ca  \to \Cc^{\frac12}}$.

The $\Cc^{\frac12}$ norm of the second term  on the RHS of \eqref{e:bound_diff1} can be bounded in the same way using the multiplicative inequality \eqref{e:multip-ineq} and heat kernel estimates, by
\begin{equ}
C    \int_0^t  (t-s)^{-\frac13} s^{-\frac16 } \|  \bvg(s) - \tvg(s) \|_{\Cc^{\frac12}}\, ds  + C\|  \Xng  -\Xn \|_{\Ca}  + C \ga^{\frac{\ka}{2}} \;,
\end{equ}
where again $C$ may depend on all the quantities stated in the lemma.

Now we consider the $\Cc^{\frac12}$ norm of the last term on the RHS of \eqref{e:bound_diff1}. 
We use \cite[Rem.~3.6 and Prop.~3.7]{MourratWeberGlobal}
which state that the space $L^p$  is continuously embedded in $\mathcal B^0_{p,\infty}$ and the latter is further continuously embedded in $\mathcal B^\alpha_{\infty,\infty}$ (i.e. the space $\Cc^{\alpha}$) provided that $\alpha+2/p=0$. Thus applying \eqref{e:Final_Error1a}, we have that for any $\bar{\ka}>0$ there exists $C=C(p,\bar\ka)$ such that
\begin{equs} [e:int-time-Err1]
\Big\| \int_0^t  &   \Pg{t-s}\star  \,\Kg \star  \ERR{1}_{\nn}(s, \cdot) \, ds \Big\|_{\Cc^{\frac12}}
\le C \int_0^t (t-s)^{-\frac14-\frac{1}{p}-\bar\ka} 
	\big\| \ERR{1}_{\nn}(s, \cdot) \big\|_{L^p(\T^2)} \, ds  \\
&\le C \ga^{-(30 \al + \ka)}
 \int_0^t (t-s)^{-\frac14-\frac{1}{p}-\bar\ka}  \Big(  \ga^{\frac{2}{3}} s^{-\frac{1}{3}}   
+ \| \Xg^{\mathrm{high}} (s,\cdot)  \|_{L^{\infty}(\T^2)} \\
&\qquad +  \|  Q_{\ga,s}(s,\cdot) \|_{L^\infty(\Le)}  
+ \|\tilde E(s,\cdot)\|_{L^p(\T^2 )}  \Big) \,ds \\
&\le 
C\ga^{\frac23-(30 \al + \ka)} 
	\int_0^t (t-s)^{-\frac14-\frac{1}{p}-\bar\ka}  s^{-\frac13}ds \\
&\qquad 
+C \ga^{-(30 \al + \ka)}\Big(
	\int_0^t (t-s)^{- (\frac14+\frac{1}{p}+\bar\ka) p_\star}   \, ds 
	\Big)^{\frac{1}{p_\star}} 
\\
&\qquad\qquad \times
	\Big(  \| \Xg^{\mathrm{high}}  \|_{L^{\infty}(\T^2\times [0,T])}
	+ \|  Q_{\ga,s} \|_{L^\infty(\Le\times [0,T])}
	+\|\tilde E\|_{L^p(\T^2\times [0,T] )} \Big)
\end{equs}
where $p_\star$ is such that $\frac{1}{p_\star} + \frac{1}{p}=1$.
Choosing (and fixing from now on) $p$ sufficiently large (depending only on $\bar\ka$) the above expression 
can be bounded by 
\[
C(T,p,\bar\ka) \ga^{-(30 \al + \ka)} \Big( 
	\ga^{\frac23}+
	\| \Xg^{\mathrm{high}}  \|_{L^{\infty}(\T^2\times [0,T])}
	+ \|  Q_{\ga,s} \|_{L^\infty(\Le\times [0,T])}
	+\|\tilde E\|_{L^p(\T^2\times [0,T] )} 
	\Big) \;.
\]
We have Proposition~\ref{prop:averageA} to bound $\tilde E$, Proposition~\ref{prop:boundQ} to bound $Q_{\ga}$.
Regarding the term $\Xg^{\mathrm{high}}$,
which is equal to
$  \Zg^{\mathrm{high}}  +  \tvg^{\mathrm{high}}
+ (P_s^\ga X_\ga^0)^{\mathrm{high}}  $,
we can bound $\| \cdot \|_{L^{\infty}(\T^2)}$ of the last two quantities by $C \ga^{1}\| \cdot  \|_{\Cc^\frac12} $. 
Finally for  $\Zg^{\mathrm{high}}$, by \cite[Lemmas~4.6]{MourratWeber}  with minor changes in the proof
due to the scaling-regime-dependent definition \eqref{e:defXlh} and 
kernel estimates in Section~\ref{sec:kernels},
one has $\E \|\Xg^{\mathrm{high}}(s, \cdot)  \|_{L^{\infty}}^p \le C\ga^{p(1-\kappa)}$.
Therefore by choosing $\nu,\ka$ small enough depending on the previously fixed $p$ one has that 
\begin{equ}
\E \sup_{0 \leq t \leq T} 
\Big\| \int_0^t    \Pg{t-s}\star  \,\Kg \star  \ERR{1}_{\nn}(s, \cdot) \, ds \Big\|^p_{\Cc^{\frac12}}\le C(p,T) \ga^{\frac12} \;.
\end{equ}

Similarly for $\ERR{2}_\nn$, invoking Proposition~\ref{p:discrete-Wick} 
to bound $ E_{\ga,s,\nn}^{: k : }$, one has
\begin{equ}
\E \sup_{0 \leq t \leq T}  
\Big\| \int_0^t     \Pg{t-s}\star  \,\Kg \star  \ERR{2}_{\nn}(s, \cdot) \, ds \Big\|^p_{\Cc^{\frac{1}{2}}}
\le C(p,T) \ga^{\frac{p}{2}} \;.
\end{equ}
Therefore \eqref{e:very_good_bound_A} is obtained.
%
%
%
\end{proof}

Now we prove our main theorem of the article.

\begin{proof}[of Theorem~\ref{thm:Main}]
The proof is essentially the same as \cite{MourratWeber}; we give the proof for completeness.
Our arguments hold for both scaling regimes.
For $\rr$ and $\nn \ge 1$, we define the events $ \Aa_{\rr}^Z =  \Aa_{\rr}^Z(\ga,\nn)$,  and $ \Aa^{\mathsf{E}} =  \Aa^{\mathsf{E}}(\ga,\nn)$ by
\begin{align*}
 \Aa_{\rr}^Z \eqdef& \, \big\{   \|\ZGn{k}    \|_{\Ca}   \leq \rr \text{ on } [0,T], \;\,\; k=1,\dots,5\big\}\; , \\
 \Aa^{\mathsf{E}} \eqdef&  \big\{ \sup_{0 \leq t \leq T}  \big| \ERR{3}(t) \big| \leq  \gamma^{\frac{1}{2p}}  \,   \big\} \,,
\end{align*}
where $p$ is the constant in \eqref{e:very_good_bound_A}.
For every $\nn, \rr\geq 1$ and every bounded uniformly continuous mapping $F : \Dd([0,T] , \Ca) \to \R$, we have
\begin{equs} [e:final-converge]
\big| \E \big( & F (   \tXgn ) \big)  -  \E \big( F (X ) \big) \big| 
 \leq \big| \E \big( F ( \oX ) \big) - \E \big( F (X ) \big)  \big| \\  
 &  + \E   \Big(  \Ll| F\Ll( \oX \Rr) -  F\Ll( \tXgn \Rr)  \Rr| \mathbf{1}_{ \Aa^Z_{\rr} \cap \Aa^{\mathsf{E}}}  \Big)  
+ \| F\|_{L^\infty} \, \P \Big( \ov{\Aa}^Z_{\rr} \cup \ov{\Aa}^\mathsf{E} \Big) \;.
\end{equs}
Recall that $\oX$ converges in law to $X$, see \eqref{e:defXbar} and the discussion below  it.
 
To bound the second term on RHS of  \eqref{e:final-converge},
note that on the event $\Aa_{\rr}^Z $ and by  continuity of $\Ss_T$ (Theorem~\ref{thm:ContSolution}), we have $\sup_{0 \leq t \leq  T } \|  \bvg(t)\|_{\Cc^{\frac12}} \le C(T,\rr)$ for some finite constant $C(T,\rr)$.
Applying Gronwall's inequality to the bound obtained in Lemma~\ref{lem:preGronwall}, one has that
on the event $\Aa_{\rr}^Z \cap \Aa^{\mathsf{E}}$
\begin{equ} \label{e:Gronwall452}
\Ll\|  \tvg(t, \cdot)-  \bvg(t, \cdot)  \Rr\|_{\Cc^{\frac12}} \leq   C \Ll(\ga^{\frac{\ka}{2}}  + \| \Xng - \Xn \|_{\Ca} \Rr)
\end{equ} 
for all $t\ge 0$ such that 
$\|\tvg(t)\|_{\Cc^{\frac12}} \le C(T,\rr)+2$.
%
In particular for $\ga$ small enough, the right hand side of \eqref{e:Gronwall452}
is bounded by $1$. By continuity of $v_\ga$ and $\bar v_\ga$ (which follows by definition \eqref{e:Defvgm} - the jumps in the evolution of $\Xg$ are all contained in the part $\Zgn$), the bound \eqref{e:Gronwall452} 
must actually hold for all $t\in[0,T]$.

This together with \eqref{e:error5}, \eqref{e:Defvgm} implies
that the second term on RHS of  \eqref{e:final-converge} vanishes.

Regarding the last term in  \eqref{e:final-converge},
 it follows from \eqref{e:very_good_bound_A} i.e. the bound for $\ERR{3}(t)$ and Chebyshev's inequality that
$\lim_{\ga \to 0} \P ( \Aa^\mathsf{E}) = 1 $.
For the event $ \Aa_{\rr}^Z$,
we know that the limiting quantities $\sup_{0 \le t \le T}  \| Z^{:k:}(t) \|_{\Ca}$ are finite a.s.; on the other hand it is easy to argue that 
the stopping time that $ \|\ZGn{k} (t)  \|_{\Ca}  $ first exceeds the value $\rr$
will converge to  \footnote{outside a countable set of $\rr$ that $ \|\ZGn{k} (t)  \|_{\Ca}  $ attains $\rr$ as a local maximum with positive probability}
the stopping time that $\| Z^{:k:}(t) \|_{\Ca}$ first exceeds the same value $\rr$.
Thus we can choose $\rr$  large enough, so that
$\liminf_{\ga \to 0} \P (\Aa^Z_{\rr} )$ is arbitrarily close to $1$.

This proves that $\tXgn$ converges in law to $X$ as $\ga$ tends to~$0$, for any fixed value of $\nn$. We can remove $\nn$ by the same reasoning as above.
The stopping time  $\taun$ defined in \eqref{e:deftaug} converges in law to 
\footnote{outside a countable set for the same reason}
 the stopping time $\tau_\nn$ defined in the same way for $X$, for every $\nn$. Moreover, we know from Theorem~\ref{thm:ContSolution} that $\sup_{0 \leq t \leq T+1} \| X(t) \|_{\Ca}$ is a.s. finite. Hence by choosing $\nn = \nn(T,\eps)$ sufficiently large, 
 $ \liminf_{\ga \to 0} \P ( \tXgn = \Xg )$  can be made arbitrarily close to $1$.
Therefore we have proved that $X_\ga$ also converges in law to $X$.

This concludes the proof of Theorem~\ref{thm:Main}.
Note that item (2) of the theorem is clearly just the degenerate case 
of the item (1)
that the cubic term equals zero and therefore one obtains a linear limit.
\end{proof}

\section{Appendix: Kernel estimates} \label{sec:kernels}

We need some estimates about $K_\ga$ and $P^\ga$. In the case of the first scaling regime \eqref{e:Phi4-scaling}, these estimates are proved in \cite[Section~8]{MourratWeber}. For the second scaling regime \eqref{e:Phi6-scaling}, we list all these results, without proving them since the proofs follow exactly the same way except that one simply applies the new scaling relations. 

We  begin with the Fourier transforms of these kernels. For  $\om \in \{-N, \ldots, N\}^2$, 
\begin{align}
\hKg (\om) = 
 \sum_{x\in \Le} \eg^2 \, \Kg(x) \,e^{- i \pi \om \cdot x}  =  \,
\ct\sum_{x\in \ga \Z^2_\star} \ga^{2} \, \KK( x) \,e^{- i \pi (\eps/ \ga) \om \cdot x}\;, \label{e:B1}
\end{align} 
where $\KK$ is the smooth function introduced in \eqref{e:norm-kk}, $\ga \Z^2_{\star} \eqdef \ga \Z^2 \setminus \{0\}$, and note that  $\eps/ \ga \approx \ga$ in the first regime and $\eps/ \ga \approx \ga^2$ in the second regime. Also,
\begin{equation}
\label{e:Fourier-semi}
\hPg{t}(\om) = \exp\Big(t \gamma^{-b} (\hKg(\om) - 1) \Big) \;, 
\end{equation} 
where $b=2$ in the first regime and $b=4$ in the second regime.

We now list some estimates which state that some properties of 
$\hKK(\ga \om)$ (resp. $\hKK(\ga^2 \om)$) also hold for $\hKg$ in the first (resp. second) regime, uniformly in $\ga$. 

\begin{lemma}\label{le:Kg0}
The following statement holds with $b=1$ in the first regime and $b=2$ in the second regime. There exists $C>0$ such that for all $0 < \ga< \frac13$ and for $|\om| \leq \ga^{-b}$ we have for $j=1,2$
\begin{align}
\big| \ga^{-2b} (1 -\hKg(\om) ) -  \pi^2 |\om|^2 \big| 
	&\leq C \ga^b | \om|^3\;, \label{e:K2.4} \\
\big|  -\ga^{-2b}  \partial_j \hKg(\om)  - 2  \pi^2 \om_j \big| 
	&\leq C \ga^b | \om|^2\;,\label{e:K2.3}\\
\big|  -\ga^{-2b}  \partial_j^2 \hKg(\om)  - 2  \pi^2 \big| 
	&\leq C \ga^b | \om| \;.\label{e:K2.2}
\end{align}
\end{lemma}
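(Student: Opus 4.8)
The plan is to reduce everything to a Taylor expansion of the continuous Fourier transform $\hKK$ of the smooth compactly supported kernel $\KK$, exactly as in the corresponding estimate \cite[Section~8]{MourratWeber} for the first regime, the only change being the bookkeeping of the scaling exponents. First I would recall from \eqref{e:B1} that
\[
\hKg(\om) = \ct \sum_{x \in \ga\Z^2_\star} \ga^2\, \KK(x)\, e^{-i\pi(\eps/\ga)\om\cdot x},
\]
and that $\eps/\ga$ behaves like $\ga$ in the first regime and like $\ga^2$ in the second regime, i.e. like $\ga^b$ with $b=1,2$ respectively. The normalization constant $\ct = \big(\sum_{x\in\ga\Z^2_\star}\ga^2\KK(x)\big)^{-1}$ is $1+O(\ga^2)$ by \eqref{e:norm-kk} together with the standard Riemann-sum / Euler-Maclaurin estimate for the smooth function $\KK$ (here using $\KK(0)=0$ is not needed, only smoothness and compact support). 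So $\hKg(\om)$ differs from the Riemann sum $\sum_{x\in\ga\Z^2}\ga^2\KK(x)e^{-i\pi\ga^b\om\cdot x}$ by $O(\ga^2)$ uniformly, and this Riemann sum differs from the genuine integral $\int_{\R^2}\KK(y)e^{-i\pi\ga^b\om\cdot y}\,dy = \hKK(\ga^b\om/2)$ (up to the precise normalization of the Fourier transform) by $O(\ga^2(1+|\ga^b\om|^k))$ for any fixed $k$, again because $\KK\in\Cc^2$ with compact support so all its derivatives up to order $2$ are bounded; one can even push this to higher order since $\KK$ is smooth away from $\partial B_3$, but $\Cc^2$ suffices for the stated error powers. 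Thus it is enough to prove the three inequalities with $\hKg(\om)$ replaced by $\hKK$ evaluated at the rescaled frequency, up to the harmless $O(\ga^2)$ terms which for $|\om|\le\ga^{-b}$ are dominated by the claimed right-hand sides.

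Next I would Taylor expand $\hKK$ near the origin. By rotation invariance of $\KK$ and the moment conditions \eqref{e:norm-kk} (namely $\int\KK=1$ and $\int\KK(y)|y|^2\,dy=4$), one has $\hKK(\xi) = 1 - \pi^2|\xi|^2 + O(|\xi|^3)$ as $\xi\to 0$, where the exact constant in the quadratic term is fixed by the second-moment normalization and the rotation invariance kills the odd first-order term; the $O(|\xi|^3)$ remainder (with a uniform constant) comes from the third-order Taylor remainder, which is controlled by $\|\nabla^3\hKK\|_\infty\lesssim \int|y|^3|\KK(y)|\,dy<\infty$. Substituting $\xi$ proportional to $\ga^b\om$ gives
\[
1 - \hKg(\om) = \pi^2 \ga^{2b}|\om|^2 + O\big(\ga^{3b}|\om|^3\big) + O(\ga^2),
\]
which after multiplying by $\ga^{-2b}$ is exactly \eqref{e:K2.4}, since for $|\om|\le\ga^{-b}$ the term $\ga^{-2b}O(\ga^2)$ is at most $O(\ga^{2-2b})$... here one has to be slightly careful: the cleanest route, which is the one \cite{MourratWeber} takes, is to keep the error inside the Riemann-sum comparison proportional to $\ga^{2}$ times bounded powers of $\ga^b\om$, so that after dividing by $\ga^{2b}$ the error is $O(\ga^{2-2b}(1+\ga^b|\om|)^k)$; one then checks directly that on the range $|\om|\le\ga^{-b}$ this is $\le C\ga^b|\om|^3$ by absorbing powers, using $\ga^{2-3b}\le \ga^b|\om|^3$ precisely when $|\om|^3\ge \ga^{2-4b}$, which is not automatic — so in practice one proves these bounds by a more careful two-regime split in $|\om|$ (small $|\om|$: Taylor remainder dominates; $|\om|$ close to $\ga^{-b}$: crude bounds on both sides suffice). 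The derivative estimates \eqref{e:K2.3} and \eqref{e:K2.2} follow the same pattern: differentiating \eqref{e:B1} in $\om_j$ brings down factors $-i\pi(\eps/\ga)x_j$, i.e. factors $\ga^b y_j$ after rescaling, and one compares with $\partial_j\hKK$ and $\partial_j^2\hKK$ whose low-frequency expansions are $\partial_j\hKK(\xi) = -2\pi^2\xi_j+O(|\xi|^2)$ and $\partial_j^2\hKK(\xi) = -2\pi^2+O(|\xi|)$, again by rotation invariance and the moment normalization; the extra powers of $x$ are integrable against $\KK$ (compact support), so the Riemann-sum-to-integral errors are still $O(\ga^2)$ times bounded powers of $\ga^b\om$.

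The only genuinely new point relative to \cite{MourratWeber} is the appearance of $b=2$ (coming from $\eps/\ga\approx\ga^2$ in the second regime), which merely relabels exponents; nothing conceptual changes. The main obstacle — if one insists on a self-contained writeup rather than citing \cite[Section~8]{MourratWeber} verbatim — is purely organizational: making the two-scale error accounting uniform over the whole range $|\om|\le\ga^{-b}$, i.e. verifying that the $O(\ga^2)$ lattice-discretization errors and the $O(\ga^{3b}|\om|^3)$ Taylor-remainder errors both stay below the claimed right-hand sides across that range, which is handled by the crude split into $|\om|$ small versus $|\om|\sim\ga^{-b}$ mentioned above. Since the proof is identical to the first-regime case after this relabeling, I would simply state the lemma and refer to \cite[Section~8]{MourratWeber}, as the paper does, noting only that $b$ is now regime-dependent.
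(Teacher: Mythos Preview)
Your conclusion matches the paper exactly: the paper gives no proof of this lemma, merely listing the statement and referring to \cite[Section~8]{MourratWeber} with the remark that the only change in the second regime is the scaling exponent (here encoded by $b$). Your final recommendation to do precisely this is therefore spot on.

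Your extended sketch is broadly correct but the route via a Riemann-sum-to-integral comparison has the defect you yourself flag: a uniform $O(\ga^2)$ discretization error, after dividing by $\ga^{2b}$, becomes $O(\ga^{2-2b})$, which for $b=2$ is $\ga^{-2}$ and swamps the claimed bound $C\ga^b|\om|^3$ for small $|\om|$. Your proposed ``two-regime split'' patch is more awkward than necessary. The cleaner argument (and the one actually used in \cite{MourratWeber}) is to Taylor expand $\hKg(\om)$ \emph{directly} in $\om$, with no detour through the continuous transform $\hKK$. By construction of the normalizing constant $\ct$, one has $\hKg(0)=1$ exactly; by the symmetry $\KK(x)=\KK(-x)$ of the discrete sum, all odd $\om$-derivatives of $\hKg$ vanish at $0$ exactly; and the third $\om$-derivatives of $\hKg$ are bounded by $C(\eps/\ga)^3 \le C\ga^{3b}$ uniformly (differentiating under the sum brings down powers of $(\eps/\ga)x$, and $\sum_x \ga^2 |\KK(x)||x|^3$ is bounded by compact support). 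This gives the Taylor remainder $O(\ga^{3b}|\om|^3)$ directly. The only remaining error is that $\partial_j^2\hKg(0) = -\pi^2(\eps/\ga)^2 \ct\sum_x \ga^2 \KK(x) x_j^2$ is $-2\pi^2\ga^{2b}(1+O(\ga^2))$ rather than exactly $-2\pi^2\ga^{2b}$; after dividing by $\ga^{2b}$ this contributes $O(\ga^2|\om|^2)$, which is $\le C\ga^b|\om|^3$ for integer $\om\ne 0$ since then $|\om|\ge 1 \ge \ga^{2-b}$. The derivative bounds \eqref{e:K2.3}--\eqref{e:K2.2} follow identically by expanding $\partial_j\hKg$ and $\partial_j^2\hKg$ in $\om$.
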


\begin{lemma}
\label{le:Kg}
The following statements hold with $b=1$ in the first regime and $b=2$ in the second regime.
There exists $C>0$ such that for all $0<\ga<\frac13$, $\om \in [-N-\frac12, N +\frac12]^2$ and $j=1,2$,

(1) (Estimates most useful for $|\om| \leq \ga^{-b}$)
\begin{align}\label{e:K1}
|\hKg(\om) | \leq 1 \;, \quad  
|\partial_j \hKg(\om) | \leq C \ga^b \big(  |\ga^b\om| \wedge 1  \big)  \;, \quad 
|\partial_j^2 \hKg(\om) | \leq  C \ga^{2b} \;. 
\end{align}

(2) (Estimates most useful for $|\om| \geq \ga^{-b}$)
\begin{align}
    | \ga^b \om|^2 \;  \big| \hKg (\om) \big| \leq C , \;\;\; 
 |  \ga^b \om|^2 \;  \big|  \partial_j \hKg (\om) \big| \leq C \ga^{b} , \;\;\; 
  |  \ga^b \om|^2 \;  \big| \partial_j^2\hKg (\om) \big| \leq C \ga^{2b} . \label{e:K3B}
\end{align}
Furthermore, there exist constants $C_1>0$ and $\ga_0 >0$ such that for all $0 <  \gamma < \ga_0$ and  $\om \in [-N-\frac12, N +\frac12]^2$ ,
\begin{equ}
1 -\hKg(\om)  \geq \frac{1}{C_1}  \big( |\ga^b \om|^2  \wedge 1 \big)\;.\label{e:K2} 
\end{equ} 
\end{lemma}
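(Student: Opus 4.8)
\smallskip
\noindent\emph{Proof proposal.}
The plan is to reduce every estimate to a corresponding property of the continuous Fourier transform $\hKK(\xi)=\int_{\R^2}\KK(x)\,e^{-i\xi\cdot x}\,dx$. Starting from \eqref{e:B1}, one views $\hKg(\om)$ as $\ct$ times a Riemann sum over the lattice $\ga\Z^2_\star$ approximating $\hKK$ at the rescaled frequency $\xi_\om\eqdef\pi(\eps/\ga)\om$. Since $\eps|\om_j|\le1$, the frequency $\xi_\om$ lies in the (closed) first Brillouin zone $[-\pi/\ga,\pi/\ga]^2$ of $\ga\Z^2$, and $|\xi_\om|\asymp\ga^b|\om|$ because $\eps/\ga\asymp\ga^b$ (here $b$ is as in the statement, i.e. $\eps/\ga\asymp\ga$ in the first regime and $\asymp\ga^2$ in the second). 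First I would record that the normalisation constant satisfies $\ct=1+O(\ga^2)$ (apply the Riemann-sum comparison at $\om=0$, using $\int\KK=1$), and that, bounding the discretisation error by means of the $\Cc^2$-regularity and compact support of $\KK$, one has $\hKg(\om)=\hKK(\xi_\om)+(\text{lower order})$, together with a companion statement for the $\om$-derivatives obtained by differentiating under the sum — each $\partial_{\om_j}$ producing a factor $\asymp\ga^b$ and replacing $\KK$ by $x_j\KK$, which is still $\Cc^1$ with compact support.

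Next I would establish the small-frequency statements \eqref{e:K1}, together with the expansions \eqref{e:K2.4}--\eqref{e:K2.2} of Lemma~\ref{le:Kg0}, by Taylor-expanding $\hKK$ at the origin: $\hKK(0)=1$ by \eqref{e:norm-kk}; $\nabla\hKK(0)=0$ since rotation invariance forces the first moments of $\KK$ to vanish; and the Hessian equals $-2\,\mathrm{Id}$ because $\partial_j\partial_k\hKK(0)=-\int x_jx_k\KK\,dx=-\tfrac12\delta_{jk}\int|x|^2\KK\,dx=-2\delta_{jk}$, using rotation invariance and $\int\KK|x|^2=4$. Hence $\hKK(\xi)=1-|\xi|^2+O(|\xi|^3)$ on a fixed ball, with a $\ga$-independent constant coming from $\KK\in\Cc^2_c$. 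Feeding this (and its differentiated versions) into the reduction above, with $|\xi_\om|\lesssim1$ whenever $|\om|\lesssim\ga^{-b}$, yields \eqref{e:K1} (the factor $|\ga^b\om|\wedge1$ reflecting that $\partial_j\hKg(\om)$ is of order $\ga^b\,|\ga^b\om|$ for small $\om$ and of order $\ga^b$ in all cases) and the precise expansions \eqref{e:K2.4}--\eqref{e:K2.2}, whose $\ga^b|\om|^3$ errors are $\ga^{-2b}$ times the $O(|\xi_\om|^3)=O(\ga^{3b}|\om|^3)$ Taylor remainder.

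For the large-frequency bounds \eqref{e:K3B} I would use that $\KK\in\Cc^2_c$, so two integrations by parts give $|\hKK(\xi)|\lesssim|\xi|^{-2}$, and similarly for $\widehat{x_j\KK}$ and $\widehat{x_jx_k\KK}$; combined with the reduction and $|\xi_\om|\asymp\ga^b|\om|$ this gives all three inequalities of \eqref{e:K3B}, each $\om$-derivative again contributing the extra $\ga^b$. For the lower bound \eqref{e:K2} I would patch three frequency windows. For $|\ga^b\om|$ small, $\hKK(\xi_\om)=1-|\xi_\om|^2+O(|\xi_\om|^3)$ gives $\hKg(\om)\le1-c|\ga^b\om|^2$. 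For $|\ga^b\om|$ in a fixed compact annulus $[c_0,R]$ I would exploit $\KK\ge0$ with support of nonempty interior, so $\hKK(\xi)=\int\KK(x)\cos(\xi\cdot x)\,dx<1$ for every $\xi\ne0$; continuity and compactness then yield a uniform gap $\sup_{c_0\le|\xi|\le R}\hKK(\xi)=1-c_1<1$, which transfers to $\hKg$ modulo the $O(\ga^2)$ error. For $|\ga^b\om|\ge R$ the decay $|\hKK(\xi_\om)|\lesssim|\xi_\om|^{-2}\le CR^{-2}$ gives $\hKg(\om)\le\tfrac12$ once $R$ is fixed large. Choosing $R$, then $c_1$, then $C_1$, and absorbing the $O(\ga^2)$ terms, produces \eqref{e:K2}.

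The step I expect to be the main obstacle is keeping every constant uniform in $\ga$ while matching these windows — in particular making sure the discretisation error and the deviation $\ct-1$ are genuinely of lower order than both the quadratic main term near $\om=0$ (itself as small as $\ga^{2b}$ near the boundary $|\om|\asymp\ga^{-b}$ of the first window) and the gap $1-c_1$ on the annulus, simultaneously for all admissible $\om$. All of this is carried out in \cite[Section~8]{MourratWeber} for the first scaling regime; the only change for the second regime is to replace $\eps/\ga\asymp\ga$ (equivalently $N\approx\ga^{-2}$) by $\eps/\ga\asymp\ga^2$ (equivalently $N\approx\ga^{-3}$), which propagates verbatim and only alters the powers $b$ in the statements.
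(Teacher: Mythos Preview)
Your proposal is correct and matches the paper's approach: the paper does not give an independent proof here but simply refers to \cite[Section~8]{MourratWeber} for the first regime and observes that the second regime follows verbatim upon replacing $\eps/\ga\asymp\ga$ by $\eps/\ga\asymp\ga^2$, which is exactly what you do (indeed you spell out somewhat more of the underlying argument than the paper itself).
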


\begin{lemma}\label{le:Pgt}
Let $\ga_0>0$ be the constant introduced in Lemma~\ref{le:Kg}. For every $T>0$, there exists a constant $C = C(T)$ such that for all $0 < \gamma<\ga_0$, $0\leq t \leq T$ and $x \in \T^2$, we have
\begin{equation}\label{e:P0}
\big|    \Pg{t} \star  \Kg(x) \big| 
	\leq C \big( t^{-1} \big(\log(\ga^{-1})\big)^2    \wedge \gamma^{-2b} \log(\ga^{-1}) \big) \;,
\end{equation}
where $b=1$ in the first regime and $b=2$ in the second regime.
\end{lemma}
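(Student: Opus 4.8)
The plan is to pass to Fourier series on $\Le$ and reduce the bound to the Fourier estimates of Lemmas~\ref{le:Kg0}--\ref{le:Kg}, following \cite[Section~8]{MourratWeber} verbatim but inserting the second--regime scalings \eqref{e:Phi6-scaling}. Since $\Pg{t}$ acts diagonally in Fourier space, one has for every $x\in\T^2$
\begin{equ}
\big|\Pg{t}\star\Kg(x)\big|\;\le\;C\sum_{\om\in\{-N,\dots,N\}^2}\big|\hPg{t}(\om)\big|\,\big|\hKg(\om)\big|\;,
\end{equ}
so the uniformity in $x$ is automatic and the estimate becomes a lattice sum. By \eqref{e:Fourier-semi} and $\hKg(\om)\le1$ (from \eqref{e:K1}) we may write $|\hPg{t}(\om)|=\exp(-t\gamma^{-2b}(1-\hKg(\om)))$, where $b=1$ in the first regime and $b=2$ in the second (the exponent appearing in \eqref{e:Fourier-semi} is $2b$). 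Then the lower bound \eqref{e:K2} turns this into Gaussian--type decay,
\begin{equ}
\big|\hPg{t}(\om)\big|\;\le\;\exp\!\Big(-\tfrac{t}{C_1}\big(|\om|^2\wedge\gamma^{-2b}\big)\Big)\;.
\end{equ}

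Next I would split the sum at $|\om|=\gamma^{-b}$. For the low modes $|\om|\le\gamma^{-b}$ I use $|\hKg(\om)|\le1$ and $|\hPg{t}(\om)|\le e^{-t|\om|^2/C_1}$ and compare with a Gaussian integral: $\sum_{0<|\om|\le\gamma^{-b}}e^{-t|\om|^2/C_1}\le C(t^{-1}\wedge\gamma^{-2b})$, while the $\om=0$ term contributes $1$, which for $t\le T$ is absorbed into a constant $C(T)$. For the high modes $\gamma^{-b}<|\om|\le N$ the minimum in \eqref{e:K2} is realised by the $\wedge1$, so $|\hPg{t}(\om)|\le e^{-t\gamma^{-2b}/C_1}$; combining this with $|\hKg(\om)|\le C|\gamma^b\om|^{-2}$ from \eqref{e:K3B} and the fact that $N\approx\gamma^{-2}$ in the first regime, $N\approx\gamma^{-3}$ in the second, so that
\begin{equ}
\sum_{\gamma^{-b}<|\om|\le N}|\om|^{-2}\;\le\;C\log(N\gamma^{b})\;\le\;C\log(\gamma^{-1})\;,
\end{equ}
this part is bounded by $C\gamma^{-2b}e^{-t\gamma^{-2b}/C_1}\log\gamma^{-1}\le C(t^{-1}\wedge\gamma^{-2b})\log\gamma^{-1}$, using $ue^{-cu}\le C$. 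Adding the two contributions gives $|\Pg{t}\star\Kg(x)|\le C(t^{-1}\wedge\gamma^{-2b})\log\gamma^{-1}$, which, since $\log\gamma^{-1}\ge1$, is dominated by the right--hand side of \eqref{e:P0}.

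There is essentially no analytic obstacle here: once the kernel bounds are granted, the argument is the routine heat--kernel--times--mollifier computation of \cite[Section~8]{MourratWeber}. The only thing that has to be checked, and the only place where the second regime differs, is the bookkeeping of exponents --- that with $\eps\approx\gamma^{3}$, $\alpha=\gamma^{4}$ the Fourier multiplier of $\Delta_\gamma$ (see \eqref{e:const-c-gamma}) is $\gamma^{-4}(\hKg(\om)-1)$, so that \eqref{e:Fourier-semi} indeed holds with exponent $4$; that the transition scale of $\hKg$ is $|\om|\sim\gamma^{-2}$, so that Lemmas~\ref{le:Kg0}--\ref{le:Kg} hold with $b=2$; and that the cutoff $N\approx\gamma^{-3}$ still leaves the tail sum only logarithmic, $\log(N\gamma^{2})=\log\gamma^{-1}$, matching $\log(N\gamma)=\log\gamma^{-1}$ in the first regime. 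I would therefore state the proof in this compressed form and, exactly as for the other kernel estimates in this appendix, refer to \cite[Section~8]{MourratWeber} for the elementary details of the two lattice--sum comparisons.
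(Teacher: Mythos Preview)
Your approach is correct and matches the paper's: the appendix explicitly refrains from proving Lemma~\ref{le:Pgt} and simply refers to \cite[Section~8]{MourratWeber}, noting that in the second regime only the exponent bookkeeping changes, which is precisely the content of your sketch. In fact your Fourier--side splitting at $|\om|=\gamma^{-b}$ with the bounds \eqref{e:K1}, \eqref{e:K3B}, \eqref{e:K2} yields the slightly sharper estimate $C(t^{-1}\wedge\gamma^{-2b})\log\gamma^{-1}$, which as you observe dominates \eqref{e:P0}.
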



\bibliographystyle{./Martin}
\bibliography{./refs}

\end{document}